\newcommand{\ds}{\displaystyle}
\newcommand{\vip}{\vskip0.15cm}
\newcommand{\ala}{\nonumber \\}
\newcommand{\indiq}{1\!\! 1}
\newcommand{\e}{{\varepsilon}}
\newcommand{\cH}{{{\mathcal H}}}
\newcommand{\cA}{{{\mathcal A}}}
\newcommand{\ctA}{{\tilde {\mathcal A}}}
\newcommand{\cF}{{{\mathcal F}}}
\newcommand{\cJ}{{{\mathcal J}}}
\newcommand{\cK}{{{\mathcal K}}}
\newcommand{\cP}{{{\mathcal P}}}
\newcommand{\dt}{{\mathbb{D}_T}}
\newcommand{\sm}{{s-}}
\newcommand{\rr}{{\mathbb{R}}}
\newcommand{\Sp}{{\mathbb{S}}}
\newcommand{\rd}{{\mathbb{R}^3}}
\newcommand{\pdeux}{{{\mathcal P}_2(\rd)}}
\newcommand{\lipdeux}{L^\infty([0,T],\pdeux)}
\newcommand{\lunlp}{L^1([0,T], L^p(\rd))}
\newcommand{\lunjgamma}{L^1([0,T], \cJ_\gamma)}
\newcommand{\intot}{\ds\int_0^t}
\newcommand{\intrd}{\ds\int_{\rd} \!\!\!}
\newcommand{\intzi}{\ds\int_0^\infty}
\newcommand{\intrdd}{\ds\int_{\rd\times\rd}\!\!\!\!\!\!\!\!\!}
\newcommand{\intzp}{\ds\int_0^\pi}
\newcommand{\intzdp}{\ds\int_0^{2\pi}}
\newcommand{\tf}{{\tilde f}}
\newcommand{\tg}{{\tilde g}}
\newcommand{\tv}{{\tilde v}}
\newcommand{\tc}{{\tilde c}}
\newcommand{\tN}{{\tilde N}}
\newcommand{\tV}{{\tilde V}}
\newcommand{\tB}{{\tilde B}}
\newcommand{\tI}{{\tilde I}}
\newcommand{\tZ}{{\tilde Z}}
\newcommand{\talpha}{{\tilde \alpha}}
\renewcommand{\theequation}{\thesection.\arabic{equation}}
\newtheorem{theo}{\indent Theorem}[section]
\newtheorem{prop}[theo]{\indent Proposition}
\newtheorem{lem}[theo]{\indent Lemma}
\newtheorem{defin}[theo]{\indent Definition}
\newtheorem{cor}[theo]{\indent Corollary}
\newtheorem{nota}[theo]{\indent Notation}
\begin{document}

\title[Very singular Boltzmann equations]
{On the uniqueness for the spatially homogeneous 
Boltzmann equation with a strong angular singularity}

\author{Nicolas Fournier$^1$, H\'el\`ene Gu\'erin$^2$}

\footnotetext[1]{Centre de Math\'ematiques,
Facult\'e de Sciences et Technologies,
Universit\'e Paris~12, 61, avenue du G\'en\'eral de Gaulle, 94010 Cr\'eteil 
Cedex, France, {\tt nicolas.fournier@univ-paris12.fr}}

\footnotetext[2]{IRMAR,
Univ. Rennes 1, Campus de Beaulieu, 35042 Rennes Cedex, France,
\texttt{helene.guerin@univ-rennes1.fr}}

\def\shortauthorname{Nicolas Fournier, H\'el\`ene Gu\'erin}

\def\abstractname{Abstract}

\begin{abstract}
We prove an inequality on the Wasserstein distance with quadratic cost
between two solutions of the spatially homogeneous Boltzmann
equation without angular cutoff, from which we deduce 
some uniqueness results.
In particular, we obtain a local (in time) 
well-posedness result in the case of (possibly very) soft potentials. 
A global well-posedeness result is shown for all
regularized hard and soft potentials without angular cutoff.
Our uniqueness result seems to be the first one
applying to a strong angular singularity, except in the
special case of Maxwell molecules.

\noindent Our proof relies on the ideas of Tanaka \cite{tanaka}:
we give a probabilistic interpretation of the Boltzmann equation
in terms of a stochastic process. 
Then we show how to couple two such processes started with two different
initial conditions, in such a way that they almost surely remain close
to each other.
\end{abstract}

\maketitle

\textbf{Mathematics Subject Classification (2000)}: 76P05 Rarefied gas
flows, Boltzmann equation [See also 82B40, 82C40, 82D05].
\smallskip

\textbf{Keywords}: Boltzmann equation without cutoff, long-range interaction, 
uniqueness, Wasserstein distance, quadratic cost.
\smallskip



\section{Introduction and main results} 
\setcounter{equation}{0}

\subsection{The Boltzmann equation}  
Let $f(t,v)$ be the density of particles with velocity $v\in \rd$ at time 
$t\geq 0$ in a spatially homogeneous dilute gas. Then under some
assumptions, $f$ solves 
the Boltzmann equation
\begin{eqnarray} \label{be}
\partial_t f_t(v) = \intrd dv_* \int_{\Sp^{2}} d\sigma B(|v-v_*|,\theta)
\big[f_t(v')f_t(v'_*) -f_t(v)f_t(v_*)\big],
\end{eqnarray}
where the pre-collisional velocities are given by
\begin{equation}\label{vprimeetc}
v'=\frac{v+v_*}{2} + \frac{|v-v_*|}{2}\sigma, \quad 
v'_*=\frac{v+v_*}{2} -\frac{|v-v_*|}{2}\sigma
\end{equation}
and $\theta$ is the so-called {\em deviation angle} defined by 
$\cos \theta = \frac{(v-v_*)}{|v-v_*|} \cdot \sigma$.
The {\em collision kernel} $B=B(|v-v_*|,\theta)=B(|v'-v'_*|,\theta)$ 
depends on the nature of the interactions between particles. 
\vip
This equation is quite natural:
it says that for each $v\in \rd$, new particles with velocity
$v$ appear due to a collision between two particles with velocities
$v'$ and $v'_*$, at rate $B(|v'-v'_*|,\theta)$, while 
particles with velocity $v$ disappear because they collide
with another particle with velocity $v_*$, at rate
$B(|v-v_*|,\theta)$.
See Desvillettes \cite{desvillettes} and Villani 
\cite{villani} for more much more details. 

\vip

Since the collisions are assumed to be elastic, 
conservation of mass, 
momentum and kinetic energy
hold at least formally for solutions to (\ref{be}), that is for all $t\geq 0$,
\begin{equation}
\intrd  f_t(v) \, \phi(v) \, dv 
= \intrd f_0(v) \, \phi(v) \, dv, \qquad \phi(v) = 1, v, |v|^2 ,
\end{equation}
and we classically may assume without loss of generality that 
$\int_{\rd} f_0(v) \, dv=1$.

\subsection{Assumptions on the collision kernel} 
We will assume that for some  functions 
$\Phi: \rr_+ \mapsto \rr_+$ and $\beta:(0,\pi] \mapsto (0,\infty)$,
\renewcommand\theequation{{\bf A1}}
\begin{equation}
B(|v-v_*|,\theta)\, \sin \theta =\Phi(|v-v_*|) \, \beta(\theta).
\end{equation}
\renewcommand\theequation{\thesection.\arabic{equation}}

In the case of an interaction potential $V(s)=1/r^s$, 
with $s\in (2,\infty)$, one has 
\begin{equation} \label{betapuissance}
\Phi(z) =   z^{\gamma}, 
\quad \beta(\theta) \stackrel{0}{\sim} \mbox{cst} \, \theta^{-1-\nu}, \; \; 
\mbox{ with } 
\gamma=\frac{s-5}{s-1}\in (-3,1), \quad \nu=\frac{2}{s-1}\in(0,2).
\end{equation}
On classically names {\em hard potentials} the case when 
$\gamma\in (0,1)$ ({\em i.e.}, $s>5$), 
{\em Maxwellian molecules} the case when $\gamma=0$ ({\em i.e.}, $s=5$), 
{\em moderately soft potentials} the case 
when $\gamma\in (-1,0)$ ({\em i.e.}, $s\in (3,5)$),
and {\em very soft potentials} the case 
when $\gamma\in (-3,-1)$ ({\em i.e.}, $s\in (2,3)$).

\vip

In any case, $\int_{0+}  \beta(\theta) d\theta = + \infty$,
which expresses the affluence of {\em grazing collisions}, that
is collisions with a very small deviation.
We will assume here the general physically reasonnable
conditions
\renewcommand\theequation{{\bf A2}}
\begin{equation}
\int_0^\pi \beta(\theta)d\theta	= +\infty, \quad 
\kappa_1 :=\int_0^\pi \theta^2 \, \beta(\theta)d\theta < + \infty.
\end{equation}
\renewcommand\theequation{\thesection.\arabic{equation}}

We now introduce, for 
$\theta \in (0,\pi]$, 
\begin{equation}\label{defH}
H(\theta):=\int_\theta^\pi \beta(x)dx \quad \hbox{ and } \quad G(z):=H^{-1}(z).
\end{equation}
Here $H$ is a continuous deacreasing bijection from $(0,\pi]$
into $[0,+\infty)$, and its inverse function $G:[0,+\infty)
\mapsto (0,\pi]$ is defined by $G(H(\theta))=\theta$, and $H(G(z))=z$.
We will suppose that there exists $\kappa_2>0$ such that
for all $x, y \in \rr_+$,
\renewcommand\theequation{{\bf A3}}
\begin{equation}
\int_0^\infty \left(G(z/x)-G(z/y) \right)^2 dz  
\leq \kappa_2 \frac{(x-y)^2}{x+y}.
\end{equation}

Concerning the velocity part of the cross section, 
we will assume that for all $x,y \in \rr_+$,

\renewcommand\theequation{\thesection.\arabic{equation}}

\begin{eqnarray}\label{a4}
\min(x^2,y^2) \frac{[\Phi(x)-\Phi(y)]^2}{\Phi(x)+\Phi(y)}
+(x-y)^2[\Phi(x)+\Phi(y)]&&\ala
+ \min(x,y) |x-y| |\Phi(x)-\Phi(y)|
&\leq& (x-y)^2 [ \Psi(x)+\Psi(y)].
\end{eqnarray}

for some function $\Psi:\rr_+\mapsto\rr_+$, with 
for some $\gamma \in (-3,0]$, some $\kappa_3>0$, for all $x\in\rr_+$,

\renewcommand\theequation{{\bf A4}$(\gamma)$}
\begin{equation}
\Psi(x) \leq \kappa_3 x^\gamma.
\end{equation}

\renewcommand\theequation{\thesection.\arabic{equation}}

Under Assumption ({\bf (A4)}$(\gamma)$), we can easy see that necessarily for all $x\in\rr_+$, $\Phi(x)\leq \Psi(x)$, and then $\Phi(x)\leq \kappa_3 x^\gamma$.

These assumptions are not very transparent.
However, the following lemma, proved in the appendix, 
shows how they apply. Roughly,
{\bf (A3)} is very satisfying,
{\bf (A4$(0)$)} corresponds to {\it regularized} 
velocity cross sections,
while {\bf (A4$(\gamma)$)} allows us to deal with general soft potentials.

\begin{lem}\label{raiso}
(i) Assume that there are $0<c<C$ and $\nu \in (0,2)$ such that 
for all $\theta\in (0,\pi]$, $c \theta^{-\nu-1}\leq 
\beta(\theta) \leq C\theta^{-\nu-1}$. Then {\bf (A2-A3)} hold.

(ii) Assume that 
$\Phi(x)=\min(x^\alpha,A)$ for some $A>0$, some $\alpha \in \rr$, or that $\Phi(x)=(\e+x)^\alpha$
for some $\e>0$, $\alpha< 0$. Then {\bf (A4$(0)$)} holds.

(iii) Assume that for some $\gamma \in (-3,0]$, $\Phi(x)=x^\gamma$.
Then {\bf (A4$(\gamma)$)} holds.
\end{lem}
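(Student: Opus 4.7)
The plan is to verify the three parts separately by elementary means; assume throughout, without loss of generality, that $x \leq y$. For part (i), \textbf{(A2)} is immediate from the two-sided power-law bound on $\beta$ together with $\nu \in (0,2)$: the lower bound forces $\int_0^\pi \beta(\theta)\,d\theta = +\infty$, while the upper bound yields $\int_0^\pi \theta^2 \beta(\theta)\,d\theta \leq C\int_0^\pi \theta^{1-\nu}\,d\theta < \infty$.

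For \textbf{(A3)}, the idea is to combine two complementary estimates of $|G(z/x) - G(z/y)|$. Differentiating $H(G(z)) = z$ gives $G'(w) = -1/\beta(G(w))$; using $\beta(\theta) \geq c\theta^{-\nu-1}$ together with the inversion bound $G(w) \leq \min(\pi,(C/(\nu w))^{1/\nu})$ (which follows from $H(\theta) \leq (C/\nu)\theta^{-\nu}$), one obtains $|G'(w)| \leq C'/(1+w^{1+1/\nu})$, whence
\begin{equation*}
|G(z/x)-G(z/y)| \;=\; \int_{z/y}^{z/x}|G'(w)|\,dw \;\leq\; \frac{C'\, z(y-x)/(xy)}{1+(z/y)^{1+1/\nu}}.
\end{equation*}
Squaring and integrating in $z$ (substitution $u = z/y$) yields the \emph{mean-value estimate} $\int_0^\infty (G(z/x)-G(z/y))^2\,dz \leq c_2\, y(y-x)^2/x^2$, where $c_2 = (C')^2 \int_0^\infty u^2/(1+u^{1+1/\nu})^2\,du$ is finite precisely because $\nu < 2$. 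Complementarily, since $G$ is decreasing,
\begin{equation*}
\int_0^\infty (G(z/x)-G(z/y))^2\,dz \;\leq\; \int_0^\infty G(z/y)^2\,dz \;=\; yM, \quad M := \int_0^\infty G(u)^2\,du,
\end{equation*}
and $M<\infty$ again by $\nu<2$. The proof of \textbf{(A3)} is then finished by splitting into $y \leq 2x$ (where $x+y \asymp x$ and the mean-value estimate gives $\leq 6c_2(y-x)^2/(x+y)$, a bound easily verified by cross-multiplying) versus $y > 2x$ (where $y-x \asymp y \asymp x+y$ and the trivial bound gives $\leq 6M(y-x)^2/(x+y)$); the two constants match at the crossover $y=2x$.

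For parts (ii) and (iii), I choose $\Psi$ of the form prescribed by \textbf{(A4)}$(\gamma)$ and control each of the three summands of (\ref{a4}) by a mean value argument applied to $\Phi$. For (iii), $\Phi(x) = x^\gamma$ with $\gamma \in (-3,0]$, the fact that $\gamma - 1 < 0$ makes $\xi \mapsto |\gamma|\xi^{\gamma-1}$ decreasing, so $|\Phi(x)-\Phi(y)| \leq |\gamma|\min(x,y)^{\gamma-1}|x-y|$; combined with $\Phi(x)+\Phi(y) \geq \min(x,y)^\gamma$ and $\min(x,y)^\gamma \leq x^\gamma+y^\gamma$ (valid since $\gamma \leq 0$), each summand is bounded by a constant times $(x^\gamma+y^\gamma)(x-y)^2$, so $\Psi(x) = \kappa_3 x^\gamma$ works. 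For (ii) with $\Phi(x) = (\e+x)^\alpha$, $\alpha<0$, the same MVT gives $|\Phi(x)-\Phi(y)| \leq |\alpha|(\e+\min)^{\alpha-1}|x-y|$, and the inequality $x^2(\e+x)^{\alpha-2} \leq (\e+x)^\alpha \leq \e^\alpha$ (and its analogues for the other two summands) shows each term is bounded by a constant times $(x-y)^2$, so $\Psi$ may be taken constant; for $\Phi(x)=\min(x^\alpha,A)$ a case analysis on whether $x,y$ lie above or below $x_0 := A^{1/\alpha}$ (supplemented by the H\"older bound $|x^\alpha - y^\alpha| \leq |x-y|^\alpha$ when $\alpha \in (0,1)$) gives \textbf{(A4)}$(0)$.

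The delicate point is \textbf{(A3)}, and specifically the sharp factor $1/(x+y)$. A direct Cauchy--Schwarz on the integral representation $|G(z/x)-G(z/y)| = \int_{z/y}^{z/x} dw/\beta(G(w))$ produces an upper bound of order $(x-y)^2(x+y)/(xy)$, which is much larger than $(x-y)^2/(x+y)$ as $\min(x,y) \to 0$; neither the mean-value estimate (which dominates when $x$ and $y$ are comparable) nor the trivial bound (which dominates when they are not) suffices across the full range of parameters, and the two-regime split at $y = 2x$ is exactly what is needed to recover the optimal scaling.
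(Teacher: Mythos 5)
Your proposal is correct, and for parts (ii)--(iii) it coincides with the paper (part (iii) is exactly the paper's mean-value argument at the smaller argument, with $\Psi(x)=\mathrm{cst}\,x^\gamma$; part (ii) is left to the reader in the paper, and your case analysis does work, though note that the H\"older bound $|x^\alpha-y^\alpha|\le|x-y|^\alpha$ alone would only give terms of order $(x-y)^{1+\alpha}$ and is not the operative tool -- it is the mean-value bound evaluated at the smaller argument, which you do state as the main device, that yields the quadratic bound). The interesting difference is in \textbf{(A3)}. Both arguments start from the same mechanism: invert $H$, use $c\theta^{-1-\nu}\le\beta(\theta)\le C\theta^{-1-\nu}$ to get $G(w)\lesssim w^{-1/\nu}$ and $G'(w)=-1/\beta(G(w))$, and integrate in $z$. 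The paper then keeps a sharper pointwise bound $G(z)-G(w)\le B[(1+\e z)^{-1/\nu}-(1+\e w)^{-1/\nu}]$, applies the elementary inequality $|u^\alpha-v^\alpha|\lesssim |u-v|/(u^{1-\alpha}+v^{1-\alpha})$ with $\alpha=1/\nu$ (splitting the cases $\alpha\in(1/2,1]$ and $\alpha\ge1$), and integrates directly to get $(x-y)^2/\max(x,y)$ uniformly in $x,y$. You instead use the cruder ``interval length times sup of $|G'|$'' estimate, which only gives $\max\cdot(x-y)^2/\min^2$ and therefore degrades when $\min\ll\max$, and you repair this by interpolating with the trivial bound $\int_0^\infty G(z/\max)^2dz=\kappa_1\max$ via the regime split at $y=2x$; your case split is thus on the ratio $y/x$ rather than on $\nu$. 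Both routes give the stated $(x-y)^2/(x+y)$ scaling with a constant depending only on $c,C,\nu$; yours is somewhat more elementary (no power-difference inequality, no dichotomy in $\nu$), the paper's is more unified (one computation covering all $x\ge y$). Your verification of \textbf{(A2)} and of the finiteness conditions ($c_2<\infty$, $M<\infty$, both exactly where $\nu<2$ enters) is correct, and the cross-multiplication checks at the crossover are sound, so there is no gap.
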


\renewcommand\theequation{\thesection.\arabic{equation}}

\subsection{Goals, existing results and difficulties}

We study in this paper the well-posedness of the spatially 
homogeneous Boltzmann equation for singular collision kernel as 
introduced above. In particular we are interested in uniqueness 
and stability with respect to the initial condition.

\vip

In the case of a collision kernel with angular cutoff, that is when 
$\int_0^\pi \beta( \theta) d\theta<+ \infty$, 
there are some optimal existence and 
uniqueness results: see Mischler-Wennberg \cite{mw} and Lu-Mouhot \cite{lumou}.

\vip

The case of collision kernels without cutoff is much more 
difficult, but is very important, since it corresponds to the previously
described physical collision kernels.
This difficulty is not surprising:
on each compact time interval, each particle collides with infinitely 
(resp. finitely) many others 
in the case without (resp. with) cutoff.

\vip

In all the previously cited physical situations, global existence of
weak solutions has been proved by Villani \cite{villexi} by using some
compactness methods.

\vip

Until recently, the only uniqueness result obtained 
for non cutoff 
collision kernel was concerning Maxwellian molecules, studied
successively by Tanaka \cite{tanaka}, Horowitz-Karandikar \cite{kh},
Toscani-Villani \cite{tv}: it was proved in \cite{tv} 
that uniqueness holds for the Boltzmann equation 
as soon as
$\Phi$ is constant and {\bf (A2)} is met, for any 
initial (measure) datum with finite mass and energy, that is
$\int_{\rd} (1+|v|^2)\, f_0(dv)< +\infty$.

\vip

There has been recently three papers in the case where $\beta$ is non 
cutoff and 
$\Phi$ is not constant. The case where $\Phi$ is bounded (together 
with additionnal 
regularity assumptions) was treated in \cite{f}, for 
essentially any initial (measure) datum such that
$\int_{\rr^d} (1+|v|)f_0(dv)<\infty$. 
More realistic collision kernels have been treated 
by Desvillettes-Mouhot \cite{dm} and Fournier-Mouhot \cite{fmou}
(including hard and moderately soft potentials).
However, all these results apply only when assuming the following
condition, stronger than {\bf (A2)}, 
\begin{equation}
\int_0^\pi \theta \beta(\theta)d\theta<\infty.
\end{equation}
In particular, this does not apply to very soft potentials ($s\in (2,3]$).
Weighted Sobolev spaces were used in \cite{dm}, while
the results of \cite{fmou}
rely on the Kantorovich-Rubinsten distance.

\vip

In the present paper, we obtain the first uniqueness result
which can deal with the case where only {\bf (A2)} is supposed.
Our result is based on the use of the Wasserstein distance
with quadratic cost. 
The main interest of our paper concerns very soft potentials,
for which we obtain the uniqueness
of the solution provided it remains in $L^p(\rd)$,
for some $p$ large enough. Since we are only able
to propagate locally such a property, we obtain some
local (in time) well-posedness result. 

\vip

Our method certainly applies to the case of hard potentials. We
however do not treat this case in the present paper, since there
are already some available uniqueness results, as said previously.
Let us also mention that in a companion paper, we use a similar
method to get some uniqueness result for the Landau equation
with soft potentials, which was still open.

\vip 

Our proof is probabilistic, and we did not manage
to rewrite it in an analytic way. The main idea is quite
simple: for two solutions $(f_t)_{t\geq 0}$, $(\tf_t)_{t\geq 0}$
to the Boltzmann equation, we construct two stochastic processes
$(V_t)_{t\geq 0}$ and $(\tV_t)_{t\geq 0}$ whose time marginal laws are
given by $(f_t)_{t\geq 0}$ and $(\tf_t)_{t\geq 0}$, and which are
coupled in such a way that $E[|V_t-\tV_t|^2]$ is ``small'' for all times.
This bounds from above the Wasserstein distance with quadratic cost
between $f_t$ and $\tf_t$.

\subsection{Notation} 
Let us denote by $C^2_\infty$ (resp. $C^2_b$, resp. $C^2_c$) 
the set of $C^2$-functions 
$\phi:\rd \mapsto \rr$ of which
the second derivative is bounded (resp. of which
the derivatives of order $0$ to $2$ are bounded, resp.
which are compactly supported). 

Let also $L^p(\rd)$ be the space of measurable 
functions $f$ with
$\| f \|_{L^p(\rd)} := \left( \int_{\rd} f^p (v)\, dv \right)^{1/p} < +\infty$.

Let $\cP(\rd)$ be the set of probability measures on 
$\rd$, and 
\begin{equation*}
\pdeux = \left\{f \in \cP(\rd), \; m_2(f) <\infty \right\} \quad 
\mbox{ with } \quad  
m_2(f) := \intrd |v|^2 \, f(dv).
\end{equation*}

For $\alpha \in (-3,0]$, we introduce the space 
$\cJ_\alpha$ of probability measures $f$ on $\rd$ such that
\begin{eqnarray}\label{cjalpha}
J_\alpha(f)&:=&\sup_{v\in\rd} \intrd |v-v_*|^\alpha f(dv_*) <\infty.
\label{cj0}
\end{eqnarray}
Of course, for any probability measure $f$, $J_0(f)=1$.
Let $\lipdeux$, $\lunlp$ and $L^1([0,T],\cJ_\alpha)$
be the sets of 
measurable families $(f_t)_{t\in [0,T]}$ of 
probability measures on $\rd$ with
$\sup_{[0,T]} \, m_2(f_t) < + \infty$,
$\int_0 ^T \| f_t \|_{L^p(\rd)} \, dt <+ \infty$, and
$\int_0 ^T J_\alpha(f_t) dt <+ \infty$ respectively.

\subsection{Weak solutions}

We follow here \cite{fm}.
For each $X\in \rd$, we introduce $I(X),J(X)\in\rd$ such that
$(\frac{X}{|X|},\frac{I(X)}{|X|},\frac{J(X)}{|X|})$ 
is an orthonormal basis of $\rd$. 
We also require that $I(-X)=-I(X)$ and $J(-X)=-J(X)$ for convenience.
For $X,v,v_*\in \rd$, for $\theta \in [0,\pi]$ and $\varphi\in[0,2\pi)$,
we set
\begin{equation}\label{dfvprime}
\left\{
\begin{array}{l}
\Gamma(X,\varphi):=(\cos \varphi) I(X) + (\sin \varphi)J(X), \\
v':=v'(v,v_*,\theta,\varphi):= v - \frac{1-\cos\theta}{2} (v-v_*)
+ \frac{\sin\theta}{2}\Gamma(v-v_*,\varphi),\\
v'_*:=v'_*(v,v_*,\theta,\varphi):= v_* + \frac{1-\cos\theta}{2} (v-v_*)
- \frac{\sin\theta}{2}\Gamma(v-v_*,\varphi),\\
a:=a(v,v_*,\theta,\varphi):=(v'-v)=-(v'_*-v_*),
\end{array}
\right.
\end{equation}
which is nothing but a suitable spherical parameterization of (\ref{vprimeetc}):
we write $\sigma\in \Sp^2$ as $\sigma=\frac{v-v_*}{|v-v_*|}\cos\theta
+ \frac{I(v-v_*)}{|v-v_*|}\sin\theta \cos\varphi+\frac{J(v-v_*)}{|v-v_*|}\sin\theta
\sin\varphi$.

Let us observe at once that
\begin{eqnarray}
v'(v_*,v,\theta,\varphi)=v'_*(v,v_*,\theta,\varphi),&&
v'_*(v_*,v,\theta,\varphi)=v'(v,v_*,\theta,\varphi),   \label{obs1}\\
v'(v,v_*,\pi-\theta,\varphi)=v'_*(v,v_*,\theta,\varphi+\pi),&&
v'_*(v,v_*,\pi-\theta,\varphi)=v'(v,v_*,\theta,\varphi+\pi).\label{obs2}
\end{eqnarray}
Let us define the notion of weak solutions we shall use.

\begin{defin}\label{dfsol}
Let $B$ be a collision kernel which satisfies {\bf (A1-A2)}. 
A family $f=(f_t)_{t\in [0,T]} \in \lipdeux$ 
is a weak solution to (\ref{be}) if
\begin{equation}\label{concon}
\int_0^T dt  \intrd f_t(dv)\intrd f_t(dv_*) \, \Phi(|v-v_*|) \, |v-v_*|^2
< + \infty,
\end{equation}
and if for any $\phi \in C^2_\infty$, and any $t\in[0,T]$,
\begin{equation}\label{wbe}
\intrd \phi(v)\, f_t(dv) =  \intrd \phi(v)\, f_0(dv)
+\intot ds \intrd f_s(dv) \intrd f_s(dv_*) \, \cA\phi(v,v_*),
\end{equation}
where
\begin{equation}\label{afini}
\cA\phi (v,v_*) = \frac{\Phi (|v-v_*|)}{2} \, \int_0^\pi 
\beta(\theta)d\theta 
\int_0^{2\pi} d\varphi
\left[\phi(v')+\phi(v'_*)-\phi(v)-\phi(v_*) \right].
\end{equation}
\end{defin}

As noted by Villani \cite[p 291]{villexi}, one has, 
for all $v,v_* \in \rd$, all $\theta\in[0, \pi]$, all
$\phi \in C^2_\infty$,
\begin{equation}\label{majfond} 
\left| \int_0^{2\pi}d\varphi \left[\phi(v')+\phi(v'_*)-\phi(v)-\phi(v_*) 
\right]  \right| \leq C ||\phi''||_\infty \theta^2 |v-v_*|^2,
\end{equation}
so that thanks to assumption {\bf (A2)}, (\ref{concon}) ensures that  
all the terms in (\ref{wbe}) are well-defined.
The proof of (\ref{majfond}) is given in the appendix for the sake of 
completeness.

\vip 

\subsection{A suitable distance} \label{dfdist}
Let us now introduce the distance on $\pdeux$ we shall use. 
For  $g,\tg \in \pdeux$, let $\cH(g,\tg)$ be the set of probability 
measures on $\rd\times\rd$ with first marginal $g$ and second marginal $\tg$.
We then set
\begin{eqnarray}\label{dfw2}
W_2(g,\tg)&=&
\inf \left\{\left(\intrdd |v-\tv|^2 \, G(dv,d\tv)\right)^{1/2},\quad G\in \cH(g,\tg)\right\}\ala
&=&\min\left\{\left(\intrdd |v-\tv|^2\, G(dv,d\tv)\right)^{1/2},\quad G\in \cH(g,\tg)\right\}.
\end{eqnarray}
This distance is the so-called 
Wasserstein distance with quadratic cost.
We refer to Villani \cite[Chapter 2]{villani2} 
for more details on this distance.

\vip 

Our result is based on the use of this distance. A remarkable result,
due to Tanaka \cite{tanaka}, is that in the Maxwellian case, that is when 
$\Phi\equiv 1$, $t\mapsto W_2(f_t,\tf_t)$ is nonincreasing 
for each pair of reasonnable solutions $f,\tf$ to the Boltzmann equation.

\subsection{The main results}
Our main result is the following inequality.

\begin{theo}\label{maintheo}
Assume {\bf (A1-A2-A3-A4$(\gamma)$)}
for some $\gamma\in (-3,0]$. 
Let us consider two weak solutions  $(f_t)_{t\in[0,T]},(\tf_t)_{t\in[0,T]}$ 
to (\ref{be}) lying in $\lipdeux \cap L^1([0,T],\cJ_\gamma)$.
Assume furthermore than for all $t\in [0,T]$, $f_t$ (or $\tf_t$) has
a density with respect to the Lebesgue measure on $\rd$.
There exists a constant $K=K(\kappa_1,\kappa_2,\kappa_3)$ such that
for all $t \in [0,T]$,
\begin{eqnarray}\label{resultat}
W_2(f_t,\tf_t) \leq W_2(f_0,\tf_0) \exp \left(
K\int_0^t J_\gamma(f_s+\tf_s) ds \right).
\end{eqnarray}
\end{theo}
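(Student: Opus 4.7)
The plan is to realize the Tanaka-type coupling sketched in the introduction and to close a Gronwall estimate for the expected squared distance of the two coupled processes. For each $s \in [0,T]$, fix a measurable selection of an \emph{optimal} coupling $R_s \in \cH(f_s, \tf_s)$ for $W_2(f_s, \tf_s)$; the absolute-continuity hypothesis on $f_s$ (or $\tf_s$) guarantees via Brenier's theorem that $R_s$ is concentrated on a graph and depends measurably on $s$. On a sufficiently rich probability space, take an initial pair $(V_0, \tV_0)$ whose joint law realizes $W_2(f_0, \tf_0)$ and a Poisson random measure $M$ on $[0,T]\times\rd\times\rd\times[0,\infty)\times[0,2\pi)$ with compensator $ds\, R_s(dv_*, d\tv_*)\, dz\, d\varphi$. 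Using that $\theta = G(z/x)$ pushes $dz$ forward onto $x\,\beta(\theta)d\theta$ on $(0,\pi]$, define coupled jump SDEs
\begin{align*}
V_t &= V_0 + \int_0^t\!\int a\bigl(V_{s-}, v_*, G(z/\Phi(|V_{s-}-v_*|)), \varphi\bigr)\, M(ds, dv_*, d\tv_*, dz, d\varphi), \\
\tV_t &= \tV_0 + \int_0^t\!\int a\bigl(\tV_{s-}, \tv_*, G(z/\Phi(|\tV_{s-}-\tv_*|)), \varphi\bigr)\, M(ds, dv_*, d\tv_*, dz, d\varphi).
\end{align*}
A routine It\^o computation combined with (\ref{wbe}) shows that the time-marginals of $V$ and $\tV$ satisfy the weak Boltzmann equation with the correct initial data and hence equal $f_t$ and $\tf_t$ respectively.

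Set $h(t) := E[|V_t - \tV_t|^2]$, so that $W_2^2(f_t, \tf_t) \leq h(t)$ and $h(0) = W_2^2(f_0, \tf_0)$. It\^o's formula for jump processes yields
\begin{equation*}
h'(t) = E\!\left[\int \bigl\{|(V_t - \tV_t) + (a - \tilde a)|^2 - |V_t - \tV_t|^2\bigr\}\, R_t(dv_*, d\tv_*)\, dz\, d\varphi\right].
\end{equation*}
The cross term $2(V_t - \tV_t) \cdot (a - \tilde a)$ is antisymmetric in $\varphi$ up to a residue of order $\theta^2 |v-v_*|^2$ (the quadratic analog of (\ref{majfond})). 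The quadratic term $|a - \tilde a|^2$ must be decomposed into pieces coming from (i) $V - \tV$, (ii) $v_* - \tv_*$, (iii) the mismatch $G(z/\Phi(|V-v_*|)) - G(z/\Phi(|\tV-\tv_*|))$ between the two deviation angles (caused by distinct collision speeds at the same $z$), and (iv) the rotation between the two orthonormal frames attached to $V-v_*$ and $\tV-\tv_*$. Assumption \textbf{(A3)} controls the $z$-integral of piece (iii)$^2$ by $\kappa_2 (\Phi(|V-v_*|) - \Phi(|\tV-\tv_*|))^2 / (\Phi(|V-v_*|) + \Phi(|\tV-\tv_*|))$, while piece (iv) fits the remaining two terms on the left-hand side of (\ref{a4}). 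Invoking \textbf{(A4$(\gamma)$)} converts the whole bound into an integrand dominated by
\begin{equation*}
C\bigl(|V_t - \tV_t|^2 + |v_* - \tv_*|^2\bigr)\bigl(|V_t - v_*|^\gamma + |\tV_t - \tv_*|^\gamma\bigr).
\end{equation*}

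Integrating against $R_t(dv_*, d\tv_*)$ and exploiting the optimality of $R_t$ (so that $\int |v_* - \tv_*|^2 R_t = W_2^2(f_t, \tf_t) \leq h(t)$) together with the definition of $J_\gamma$, one reaches $h'(t) \leq K\, J_\gamma(f_t + \tf_t)\, h(t)$, and Gronwall's lemma yields (\ref{resultat}). The main obstacle is the decomposition of $a - \tilde a$: a concrete but delicate trigonometric computation is required to verify that the three terms on the left-hand side of (\ref{a4}), paired with \textbf{(A3)}, really exhaust the four contributions above. The role of $G = H^{-1}$ is precisely to re-parametrize $\theta$ uniformly in $z$, so that the angular discrepancy becomes a clean $L^2$-in-$z$ quantity amenable to \textbf{(A3)}; without this change of variable, the grazing singularity at $\theta = 0$ would defeat any direct estimate.
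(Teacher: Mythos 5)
Your overall strategy is the paper's own (it is exactly the heuristic of Section \ref{notrig}: couple the two solutions through a single Poisson measure whose jump intensity uses an optimal coupling $R_s\in\cH(f_s,\tf_s)$, apply It\^o, use {\bf (A3)}--{\bf (A4$(\gamma)$)} and Gronwall), but as written it has two genuine gaps. The first is the omission of Tanaka's phase shift. In your coupled SDEs both processes jump with the \emph{same} angle $\varphi$, and you then claim that the frame-rotation mismatch (your piece (iv)) ``fits the remaining two terms on the left-hand side of (\ref{a4})''. That cannot work: the left-hand side of (\ref{a4}) only involves $\Phi$ evaluated at the two relative speeds, whereas the mismatch $|\Gamma(V-v_*,\varphi)-\Gamma(\tV-\tv_*,\varphi)|$ is a purely geometric quantity which, for a fixed choice of the frames $I(X),J(X)$ (which cannot be chosen continuously in $X$), is in general of order $|V-v_*|+|\tV-\tv_*|$ even when the two relative velocities are close. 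The paper's fix is Lemma \ref{tanana}: the second process must jump at the shifted angle $\varphi+\varphi_0(V_{s-}-v_*,\tV_{s-}-\tv_*)$, which gives $|\Gamma(X,\varphi)-\Gamma(Y,\varphi+\varphi_0(X,Y))|\leq 3|X-Y|$ and leads to estimate (\ref{esti2}); one must then also check (Lemma \ref{plp}) that the shifted driving measure is still a Poisson measure with uniform $\varphi$-intensity, so that the second marginal flow is unchanged. Without this ingredient the key bound by $C(|V-\tV|^2+|v_*-\tv_*|^2)(\Psi+\Psi)$ simply fails.

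The second gap is the sentence ``a routine It\^o computation combined with (\ref{wbe}) shows that the time-marginals \ldots equal $f_t$ and $\tf_t$''. This is where most of the paper's work lies and it is not routine. Your SDEs are driven by an infinite-intensity measure (the $z$-integral diverges), so the uncompensated integrals you wrote are not absolutely convergent: the equation must be written with the compensated measure plus the drift $-\kappa_0\int f_s(dv)\Phi(|V_s-v|)(V_s-v)$ as in (\ref{sde})/(\ref{agood}), and its well-posedness and moment bounds have to be established. More importantly, even granting existence, the It\^o computation only shows that the marginal flow of $V$ solves the \emph{linearized} equation (\ref{lbe}) with $f$ frozen in the kernel; concluding that it equals $f_t$ requires a uniqueness theorem for that linear equation, which the paper obtains through martingale-problem arguments (Lemma \ref{unili}, relying on Bhatt--Karandikar), together with the cutoff approximation machinery (Notation \ref{notagpro}, Proposition \ref{ineqk} with its $L$-truncation and error $L^{-\alpha(\gamma)}$, and the convergence Lemma \ref{cacv}) that makes the Gronwall step legitimate for $\gamma<0$. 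Your final bound $h'(t)\leq K J_\gamma(f_t+\tf_t)h(t)$ does follow by Tonelli once the law of $V_t$ is known to be $f_t$, so the identification of marginals is not a side remark but the crux; as it stands, the proposal assumes precisely what needs to be proved there.
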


Observe here that the technical assumption that $f_t$ has a density can
easily be removed, provided one has some uniform estimates on $J_\gamma(f_t)$,
as will be the case in the applications below.

\vip

We first 
give some application to the case of mollified velocity cross sections.

\begin{cor}\label{nul}
Assume {\bf (A1-A2-A3-A4$(0)$)}.
For any $f_0\in \pdeux$, any $T>0$, there exists a unique weak solution 
$(f_t)_{t\in[0,T]}\in\lipdeux$ to  (\ref{be}). Furthermore, there
exists a constant $K=K(\kappa_1,\kappa_2,\kappa_3)$ such that for any
pair of weak solutions $(f_t)_{t\in[0,T]}$ and $(\tf_t)_{t\in[0,T]}$
to (\ref{be}) in $\lipdeux$, 
any $t>0$,
\begin{eqnarray}\label{resultatnul}
W_2(f_t,\tf_t) \leq W_2(f_0,\tf_0) e^{K t}.
\end{eqnarray}
\end{cor}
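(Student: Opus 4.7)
The strategy is to derive both existence and stability from Theorem \ref{maintheo} specialized to $\gamma=0$. The key observation is that $J_0(g) = \int_{\rd} g(dv_*) = 1$ for every probability measure $g$, so the hypothesis $f,\tf \in L^1([0,T],\cJ_0)$ is automatic and $\int_0^t J_0(f_s+\tf_s)\,ds = 2t$. Moreover, under \textbf{(A4$(0)$)}, Lemma \ref{raiso} and the remark $\Phi \leq \Psi$ give $\Phi \leq \kappa_3$, so the integrability requirement (\ref{concon}) of Definition \ref{dfsol} reduces to $\int_0^T \!\! \iint |v-v_*|^2 f_s(dv) f_s(dv_*)\,ds < \infty$, which is immediate from $f \in \lipdeux$.

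For \textbf{existence}, boundedness of $\Phi$ places us in the setting of Villani's compactness approach \cite{villexi}, which produces a weak solution in $\lipdeux$ with conserved energy $m_2(f_t)=m_2(f_0)$. Alternatively, one may approximate $\beta$ by angular-cutoff kernels, invoke Mischler--Wennberg \cite{mw} to produce approximating solutions, and pass to the limit using the stability estimate below.

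For \textbf{stability and uniqueness}, we apply Theorem \ref{maintheo} with $\gamma=0$ to a pair of weak solutions $f,\tf\in\lipdeux$ one of which — say $\tf$ — has a density for each $t\in[0,T]$. The theorem then yields
\begin{equation*}
W_2(f_t,\tf_t) \leq W_2(f_0,\tf_0)\, \exp(2Kt),
\end{equation*}
which is exactly (\ref{resultatnul}) after renaming the constant. Uniqueness follows by taking $f_0 = \tf_0$. To handle the case where neither solution is known to have a density, I would mollify the initial condition by $f_0^\e := f_0 * \rho_\e$ with a smooth kernel $\rho_\e$, construct (via the existence argument, applied to an appropriately regularized problem if needed) a weak solution $f^\e$ starting from $f_0^\e$ having a density for every $t$, and then apply Theorem \ref{maintheo} to the pairs $(f^\e,f)$ and $(f^\e,\tf)$. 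Since $J_0 \equiv 1$ uniformly in $\e$, the exponential constant does not depend on $\e$, and passing to the limit $\e \to 0$ (using $W_2(f_0^\e, f_0) \to 0$ together with the lower semicontinuity of $W_2$ along weakly convergent sequences with controlled second moments) transfers the bound to the original pair $(f,\tf)$.

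The main obstacle lies entirely in the removal of the density assumption: one must verify that some regularization of the initial datum (and possibly of the collision operator itself) does produce an absolutely continuous solution in $\lipdeux$, and that the regularized solutions converge in $W_2$ back to arbitrary weak solutions of the original equation. Once this approximation step is in hand, both the stability inequality and uniqueness follow mechanically from Theorem \ref{maintheo} and the identity $J_0 \equiv 1$.
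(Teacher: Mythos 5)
Your strategy coincides with the paper's: specialize Theorem \ref{maintheo} to $\gamma=0$ using $J_0\equiv 1$, and remove the density hypothesis by comparing both solutions, via the triangle inequality, with an approximating solution that does have a density. However, you leave open exactly the point that carries the weight of the argument, namely that one can actually produce, for suitable approximations of the initial datum, a weak solution in $\lipdeux$ which is absolutely continuous for \emph{every} $t\in[0,T]$. The paper closes this by taking approximating data of finite entropy (your mollification $f_0*\rho_\e$ would serve, since it is bounded): Villani's construction, which is easier here because $\Phi$ is bounded, yields a weak solution whose entropy remains finite for all times, hence which has a density for all times; no regularization of the collision operator is needed. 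Without this propagation of finite entropy (or some substitute guaranteeing absolute continuity of $f^\e_t$ for all $t$), your application of Theorem \ref{maintheo} to the pairs $(f^\e,f)$ and $(f^\e,\tf)$ is not justified, so the proposal is incomplete at precisely the step you flag as the main obstacle.

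A second, smaller inaccuracy concerns existence: Villani's compactness result does not apply directly to an arbitrary $f_0\in\pdeux$, since it requires finite entropy, $\int_{\rd} f_0\log(1+f_0)\,dv<\infty$. The paper obtains existence for general $f_0\in\pdeux$ essentially as in your ``alternative'': approximate $f_0$ in $W_2$ by finite-entropy data $f_0^n$, use the stability inequality (\ref{resultatnul}) -- already available for the corresponding solutions, which have densities -- to show that $(f^n_t)$ is uniformly Cauchy for $W_2$, and pass to the limit in the weak formulation (\ref{wbe}). Note also that your other suggested route, approximating $\beta$ by angular-cutoff kernels and ``passing to the limit using the stability estimate'', does not work as stated: Theorem \ref{maintheo} compares two solutions of the \emph{same} equation, not a solution of the cutoff equation with one of the non-cutoff equation, so that step would require a separate compactness or consistency argument.
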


We now apply our inequality to the case of soft potentials.

\begin{cor}\label{sp}
Assume {\bf (A1-A2-A3-A4$(\gamma)$)}
for some $\gamma \in (-3,0]$, and let $p\in (3/(3+\gamma),\infty)$.

(i) For any pair
of weak solutions 
$(f_t)_{t\in[0,T]}$, $(\tf_t)_{t\in[0,T]}$ to (\ref{be}) 
lying in $L^\infty([0,T],\pdeux) \cap L^1([0,T], L^p(\rd))$, there holds
\begin{equation}
\forall \, t \in [0,T], \qquad W_2(f_t,\tf_t) \leq W_2(f_0,\tf_0) \, 
\exp \left(K_p \int_0^t [1+ \| f_s \|_{L^p(\rd)}+  \| \tf_s \|_{L^p(\rd)} ]
ds  \right)
\end{equation}
where $K_p$ depends only on 
$\gamma,p,\kappa_1,\kappa_2,\kappa_3$.
Uniqueness and stability with respect to the initial condition thus 
hold in $L^\infty([0,T],\pdeux) \cap L^1([0,T], L^p(\rd))$.

(ii) For any
$f_0 \in \pdeux \cap L^p(\rd)$, 
there exists 
$T_*=T_* \big(\|f_0\|_{L^p(\rd)},p,\gamma,\kappa_1,\kappa_2,\kappa_3 \big)>0$
such that there exists a unique weak solution $(f_t)_{t \in [0,T_*)}$ 
to (\ref{be}) lying in  
$L^\infty_{\mbox{{\scriptsize {\em loc}}}}\big([0,T_*),\pdeux\cap 
L^p(\rd)\big)$.
\end{cor}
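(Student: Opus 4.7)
For \textbf{part (i)}, the plan is to reduce to Theorem~\ref{maintheo} by controlling $J_\gamma(f_t)$ by $\|f_t\|_{L^p(\rd)}$. For any probability density $f\in L^p(\rd)$ and any $v\in\rd$, split
$$\intrd |v-v_*|^\gamma f(v_*)\,dv_* = \Big(\int_{|v-v_*|\leq 1}+\int_{|v-v_*|>1}\Big)|v-v_*|^\gamma f(v_*)\,dv_*.$$
Since $\gamma\leq 0$ and $f$ integrates to $1$, the tail is bounded by $1$; for the near-diagonal piece, Hölder's inequality with conjugate exponent $p'=p/(p-1)$ yields
$$\int_{|v-v_*|\leq 1}|v-v_*|^\gamma f(v_*)\,dv_*\leq \|f\|_{L^p}\Big(\int_{|x|\leq 1}|x|^{\gamma p'}dx\Big)^{1/p'},$$
and the radial integral $\int_0^1 r^{\gamma p'+2}dr$ converges iff $\gamma p'+3>0$, equivalently $p>3/(3+\gamma)$. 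Hence $J_\gamma(f)\leq C_{p,\gamma}(1+\|f\|_{L^p})$. Plugging this into (\ref{resultat}) produces the claimed exponential bound; the technical assumption that $f_t$ has a density for every $t$ is handled exactly as in the remark following Theorem~\ref{maintheo}, since our $L^1([0,T],L^p)$ hypothesis supplies the required integrated $J_\gamma$ control. Uniqueness and stability in $\lipdeux\cap\lunlp$ follow at once.

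For \textbf{part (ii)}, I would construct a local solution by regularizing the cross section. Set $\Phi_\e(x):=(\e+x)^\gamma$ for $\e>0$; by Lemma~\ref{raiso}(ii) this satisfies {\bf (A4(0))}, and Corollary~\ref{nul} then produces a unique global weak solution $f^\e\in\lipdeux$ starting from $f_0$. To propagate the $L^p$-norm uniformly in $\e$, I would establish a formal energy estimate of the form
$$\frac{d}{dt}\|f^\e_t\|_{L^p}^{p}\leq C\,J_\gamma(f^\e_t)\,\|f^\e_t\|_{L^p}^{p}\leq C'\bigl(1+\|f^\e_t\|_{L^p}\bigr)\|f^\e_t\|_{L^p}^{p},$$
where the second inequality uses the estimate from part (i) (and is uniform in $\e$ since $\Phi_\e(x)\leq x^\gamma$). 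A Gronwall-type argument then yields a uniform bound on $\|f^\e_t\|_{L^p}$ on some interval $[0,T_*)$, with $T_*$ depending only on $\|f_0\|_{L^p}$, $p$, $\gamma$ and the $\kappa_i$. Combined with the uniform second-moment and mass bounds, a weak compactness argument extracts a subsequence converging to some $f\in L^\infty_{\mathrm{loc}}([0,T_*),\pdeux\cap L^p(\rd))$; pointwise convergence $\Phi_\e\to\Phi$ together with the uniform $L^p$ bound allows one to pass to the limit in the weak formulation and conclude that $f$ solves (\ref{be}). Uniqueness inside this class is then immediate from part (i).

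The \textbf{main obstacle} is the uniform-in-$\e$ $L^p$-propagation estimate. Testing $(f^\e_t)^{p-1}$ against the regularized equation generates a gain term
$$\iint \Phi_\e(|v-v_*|)\int_{\Sp^2}f^\e_t(v')f^\e_t(v'_*)\,d\sigma\;(f^\e_t)^{p-1}(v)\,dv\,dv_*,$$
which must be dominated by $J_\gamma(f^\e_t)\|f^\e_t\|_{L^p}^{p}$. Establishing this requires a Young/convolution-type bound for the gain operator after the pre/post-collisional change of variables, together with the delicate cancellation against the loss term that tames the grazing-collision singularity. This input is classical in spirit but must be carried out carefully in the non-cutoff, soft-potential regime; it is the only substantive analytic ingredient beyond Theorem~\ref{maintheo} needed to close the argument.
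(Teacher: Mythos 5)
Your part (i) is exactly the paper's argument: the splitting of $\int |v-v_*|^\gamma f(v_*)dv_*$ into the near and far regions, H\"older with exponent $p/(p-1)$ and the condition $\gamma p/(p-1)>-3$ (i.e.\ $p>3/(3+\gamma)$) giving $J_\gamma(f)\leq C_{\gamma,p}\|f\|_{L^p}+1$, then Theorem \ref{maintheo}; since the solutions lie in $L^p$ with $p>1$ they have densities, so the density hypothesis is automatic. Nothing to add there.

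For part (ii) there is a genuine gap. The entire substance of the paper's proof of (ii) is the a priori estimate $\frac{d}{dt}\|f_t\|_{L^p}\leq C(1+\|f_t\|_{L^p}^2)$, which you assert (in the equivalent form $\frac{d}{dt}\|f_t\|_{L^p}^p\leq C\,J_\gamma(f_t)\|f_t\|_{L^p}^p$) but explicitly leave unproven, calling it ``the only substantive analytic ingredient''. That ingredient cannot be waved at: a naive Young/convolution bound on the gain term alone fails in the non-cutoff regime because $\int_0^\pi\beta(\theta)d\theta=\infty$; one must exploit the gain--loss cancellation. The paper does this by symmetrizing $\beta$ onto $(0,\pi/2]$, applying the cancellation lemma of Alexandre--Desvillettes--Villani--Wennberg to reduce the difference $f_t^p(v')-f_t^p(v)$ to the kernel difference $|\cos^{-3}(\theta/2)\Phi(|v-v_*|\cos^{-1}(\theta/2))-\Phi(|v-v_*|)|$, and then using precisely assumption {\bf (A4)}$(\gamma)$ (through $|\Phi(x)-\Phi(y)|\leq \kappa_3(x^\gamma+y^\gamma)|x-y|/\min(x,y)$) to bound this by $C\theta^2|v-v_*|^\gamma$, which is integrable against $\hat\beta(\theta)d\theta$ by {\bf (A2)}; the bound $\|f_t\|_{L^p}^2$ is superlinear, so the propagation is only local, via the explicit $\tan(\arctan\|f_0\|_{L^p}+Ct)$ comparison (your ``Gronwall-type argument'' should be read this way, not as a genuine Gronwall bound). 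Until you carry out this computation (or an equivalent one, uniformly in $\e$ for your regularized kernels $\Phi_\e$), the argument does not close.

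Your construction of solutions also diverges from the paper's, and is the heavier route. The paper does not regularize the kernel at all: it invokes Villani's existence theorem for finite-entropy initial data (noting that $f_0\in\pdeux\cap L^p$ with $p>1$ has finite entropy) to get a weak solution with the true soft potential, and then only needs the a priori $L^p$ bound to place it in the uniqueness class of part (i). Your scheme ($\Phi_\e(x)=(\e+x)^\gamma$, Corollary \ref{nul}, uniform estimates, compactness, passage to the limit in the weak formulation with the singular $\Phi$) is plausible but adds two further pieces of work you only sketch: the uniform-in-$\e$ version of the very estimate discussed above, and the limit passage near the diagonal where $|v-v_*|^\gamma$ is unbounded (this needs the uniform $L^p$ and moment bounds quantitatively, not just pointwise convergence of $\Phi_\e$). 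So even granting the missing estimate, your route requires more justification than the paper's, for no gain in generality.
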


The rest of the paper is dedicated to the proof of these results.
We first state some preliminary lemmas in Section \ref{prel}.
Since the rigorous proof of Theorem \ref{maintheo}, handled in Section \ref{secmain},
is quite complicated, we first give some formal arguments in Section
\ref{notrig}.
Corollaries \ref{nul} and \ref{sp} are checked in Section \ref{seccor}.
Finally, an appendix containing
technical computations lies at the end of the paper.

\section{Preliminaries}\label{prel}\setcounter{equation}{0}

We start by a suitable way to rewrite the collision
operator. The main interest of the following expression is that 
we make disappear the velocity-dependance
$\Phi(|v-v_*|)$ in the {\it rate}. Such a trick 
was already used in \cite{fm2}.

\begin{lem}\label{rewriteA} 
Assume {\bf (A1-A2)} and set 
\begin{equation}\label{dfkzero}
\kappa_0:=\pi 
\int_0^\pi (1-\cos\theta) \beta(\theta)d\theta. 
\end{equation}
Recalling 
(\ref{defH}) and (\ref{dfvprime}), define,
for $z\in (0,\infty)$,
$\varphi\in [0,2\pi)$, $v,v_*\in \rd$, 
\begin{equation}\label{dfc}
c(v,v_*,z,\varphi):=a[v,v_*,G(z/\Phi(|v-v_*|)),\varphi].
\end{equation} 
We have $\cA\phi(v,v_*)= \frac{1}{2}[\ctA\phi(v,v_*) + \ctA\phi(v_*,v)]$
for all $v,v_*\in\rd$ and $\phi\in C^2_\infty$, where
\begin{eqnarray}\label{agood}
\ctA\phi(v,v_*)&=&\int_0^\infty dz \int_0^{2\pi}d\varphi
\Big( \phi[v+c(v,v_*,z,\varphi)] -\phi[v] - 
c[v,v_*,z,\varphi].\nabla\phi[v] \Big) \ala
&& -\kappa_0 \Phi(|v-v_*|) \nabla\phi(v).(v-v_*) \ala
&=&\int_0^\infty dz \int_0^{2\pi}d\varphi
\Big(\phi[v+c(v,v_*,\varphi+\varphi_0)] -\phi[v]
- c[v,v_*,z,\varphi+\varphi_0].\nabla\phi[v] \Big) \ala
&& -\kappa_0 \Phi(|v-v_*|) \nabla\phi(v).(v-v_*),
\end{eqnarray}
the second equality holding for any $\varphi_0\in [0,2\pi)$ (which
may depend on $v,v_*,z$).
As a consequence, we may replace $\cA$ by $\ctA$ in (\ref{wbe}).
\end{lem}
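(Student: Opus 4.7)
My plan is to carry out the change of variables $\theta\mapsto z=\Phi(|v-v_*|)H(\theta)$, which absorbs the velocity weight $\Phi$ into the ``speed of time'', together with a symmetrization in $(v,v_*)$ and a linear compensator that tames the grazing singularity. First, I split
\begin{equation*}
\phi(v')+\phi(v'_*)-\phi(v)-\phi(v_*)=[\phi(v')-\phi(v)]+[\phi(v'_*)-\phi(v_*)]
\end{equation*}
and use observation (\ref{obs1}), namely $v'_*(v,v_*,\theta,\varphi)=v'(v_*,v,\theta,\varphi)$, to rewrite the second bracket as the first bracket in the swapped variables. This yields $\cA\phi(v,v_*)=\tfrac12[\mathcal{B}\phi(v,v_*)+\mathcal{B}\phi(v_*,v)]$, where $\mathcal{B}\phi(v,v_*):=\Phi(|v-v_*|)\int_0^\pi\beta(\theta)d\theta\int_0^{2\pi}d\varphi\,[\phi(v')-\phi(v)]$.

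Next, I perform the substitution $z=\Phi(|v-v_*|)H(\theta)$. Since $H$ is a $C^1$ decreasing bijection of $(0,\pi]$ onto $[0,\infty)$ with $H'=-\beta$, this gives $\theta=G(z/\Phi(|v-v_*|))$ and $dz=-\Phi(|v-v_*|)\beta(\theta)d\theta$, so by (\ref{dfc}) the vector $a[v,v_*,\theta,\varphi]$ becomes $c(v,v_*,z,\varphi)$. To obtain an absolutely convergent expression (necessary under the mere assumption \textbf{(A2)}), I add and subtract the linear term $c\cdot\nabla\phi(v)$ inside the integrand. A second-order Taylor expansion gives
\begin{equation*}
|\phi(v+c)-\phi(v)-c\cdot\nabla\phi(v)|\le\tfrac12\|\phi''\|_\infty|c|^2\le C\|\phi''\|_\infty\theta^2|v-v_*|^2,
\end{equation*}
which is integrable thanks to $\kappa_1<\infty$. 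The subtracted piece is explicit: from (\ref{dfvprime}), $a=-\tfrac{1-\cos\theta}{2}(v-v_*)+\tfrac{\sin\theta}{2}\Gamma(v-v_*,\varphi)$, and $\int_0^{2\pi}\Gamma(v-v_*,\varphi)d\varphi=0$, hence $\int_0^{2\pi}a\,d\varphi=-\pi(1-\cos\theta)(v-v_*)$. Reverting the change of variables,
\begin{equation*}
\int_0^\infty dz\int_0^{2\pi}d\varphi\,c(v,v_*,z,\varphi)\cdot\nabla\phi(v)=-\kappa_0\Phi(|v-v_*|)(v-v_*)\cdot\nabla\phi(v),
\end{equation*}
which is precisely the compensator appearing in (\ref{agood}) and establishes the first equality defining $\ctA$.

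The second equality in (\ref{agood}) is an immediate consequence of $2\pi$-periodicity: for any measurable $\varphi_0=\varphi_0(v,v_*,z)$ one has $\int_0^{2\pi}F(\varphi)d\varphi=\int_0^{2\pi}F(\varphi+\varphi_0)d\varphi$. Finally, since $f_s(dv)\otimes f_s(dv_*)$ is symmetric in $(v,v_*)$, the identity $\cA\phi(v,v_*)=\tfrac12[\ctA\phi(v,v_*)+\ctA\phi(v_*,v)]$ integrates to $\iint f_s f_s\,\cA\phi=\iint f_s f_s\,\ctA\phi$, so $\ctA$ may replace $\cA$ in (\ref{wbe}).

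The main obstacle is the middle step: the ``natural'' integral $\int_0^\infty dz\int_0^{2\pi}d\varphi\,[\phi(v+c)-\phi(v)]$ is not absolutely convergent, and the compensator $c\cdot\nabla\phi(v)$ is likewise non-integrable on its own. What saves the argument is that the $\Gamma$-component of $a$ (hence of $c$) has zero $\varphi$-average, so that once one integrates in $\varphi$ before $z$, the compensator collapses to the explicit, integrable quantity $-\kappa_0\Phi(|v-v_*|)(v-v_*)\cdot\nabla\phi(v)$. One must keep track of this order of integration to make the addition--subtraction trick rigorous.
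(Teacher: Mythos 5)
Your proof is correct and takes essentially the same route as the paper's: the same substitution $\theta=G(z/\Phi(|v-v_*|))$, the same identity $\int_0^\pi \beta(\theta)d\theta\int_0^{2\pi} a\,d\varphi=-\kappa_0(v-v_*)$ cancelling the compensator, the same use of (\ref{obs1}) for the symmetrization, and the same Taylor bound guaranteeing absolute convergence. The only difference is the direction of the computation (you split $\cA$ into one-sided pieces and identify each with $\ctA$, whereas the paper rewrites $\ctA$ in the $\theta$-variables and sums), which is an immaterial reorganization.
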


This lemma is proved in the appendix.
Let us now recall a fundamental remark by Tanaka \cite{tanaka},
slighlty precised in \cite[Lemma 2.6]{fm}.
We use here notation (\ref{dfvprime}).

\begin{lem}\label{tanana}
There exists a measurable function $\varphi_0 : \rd \times \rd
\mapsto [0,2\pi)$, such that for all $X,Y\in\rd$, all $\varphi\in[0,2\pi)$,
\begin{equation}
|\Gamma(X,\varphi) - \Gamma(Y, \varphi + \varphi_0(X,Y))|
\leq 3 |X - Y|.
\end{equation}
\end{lem}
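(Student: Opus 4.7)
The plan is to construct $\varphi_0(X,Y)$ from a rotation of $\rd$ that optimally aligns the orthonormal frame attached to $X$ with that attached to $Y$. Write $\hat X:=X/|X|$, $\hat Y:=Y/|Y|$, and let $R=R(X,Y)\in\mathrm{SO}(3)$ be the \emph{minimal} rotation sending $\hat X$ to $\hat Y$: it fixes the axis orthogonal to $\mathrm{span}(\hat X,\hat Y)$ and rotates in that plane by the angle $\alpha_0\in[0,\pi]$ between $\hat X$ and $\hat Y$ (with an ad hoc measurable choice when $\hat X=\pm\hat Y$). Since $R$ maps $X^\perp$ isometrically onto $Y^\perp$, the pair $(R(I(X)/|X|),R(J(X)/|X|))$ is an orthonormal basis of $Y^\perp$, so I would define $\varphi_0=\varphi_0(X,Y)\in[0,2\pi)$ as the angle satisfying
$$
R\Bigl(\tfrac{I(X)}{|X|}\Bigr)=\cos\varphi_0\,\tfrac{I(Y)}{|Y|}+\sin\varphi_0\,\tfrac{J(Y)}{|Y|},
\qquad
R\Bigl(\tfrac{J(X)}{|X|}\Bigr)=-\sin\varphi_0\,\tfrac{I(Y)}{|Y|}+\cos\varphi_0\,\tfrac{J(Y)}{|Y|}.
$$

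With this choice, a one-line trigonometric expansion using $\cos(\varphi+\varphi_0)$ and $\sin(\varphi+\varphi_0)$ yields the key identity $\Gamma(Y,\varphi+\varphi_0)=|Y|\,R(\Gamma(X,\varphi)/|X|)$. Setting $u:=\Gamma(X,\varphi)/|X|$, which is a unit vector in $X^\perp$, the triangle inequality gives
$$
|\Gamma(X,\varphi)-\Gamma(Y,\varphi+\varphi_0)|=\bigl||X|u-|Y|R(u)\bigr|\leq\bigl||X|-|Y|\bigr|+|Y|\,|u-R(u)|.
$$
The first term is at most $|X-Y|$. For the second, I would decompose $u=\beta e+\gamma w$, where $e$ is the unit vector of $X^\perp\cap\mathrm{span}(\hat X,\hat Y)$ and $w$ spans the rotation axis; a short explicit calculation then gives $|u-R(u)|=2|\beta||\sin(\alpha_0/2)|\leq 2|\sin(\alpha_0/2)|=|\hat X-\hat Y|$.

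To finish, I would use the elementary bound $|\hat X-\hat Y|\leq 2|X-Y|/\max(|X|,|Y|)$, which follows from the identity $|Y|X-|X|Y=(|Y|-|X|)X+|X|(X-Y)$ and the triangle inequality. Assuming WLOG $|X|\leq|Y|$, this gives $|Y|\,|u-R(u)|\leq 2|X-Y|$, and summing yields $|\Gamma(X,\varphi)-\Gamma(Y,\varphi+\varphi_0)|\leq 3|X-Y|$, as required.

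The main delicate point is the well-definedness of $\varphi_0$ above: the identity $\Gamma(Y,\varphi+\varphi_0)=|Y|R(\Gamma(X,\varphi)/|X|)$ requires that $(R(I(X)/|X|),R(J(X)/|X|))$ be related to $(I(Y)/|Y|,J(Y)/|Y|)$ by a \emph{rotation} of $Y^\perp$, which amounts to the frames $(\hat X,I(X)/|X|,J(X)/|X|)$ and $(\hat Y,I(Y)/|Y|,J(Y)/|Y|)$ having the same orientation in $\rd$. Since $\det(\hat X,I(X)/|X|,J(X)/|X|)$ is a measurable $\{\pm1\}$-valued function of $X$, this splits $\rd\times\rd$ into measurable pieces; on the piece of equal orientation the construction applies directly, on the other piece one replaces $R$ by an appropriate improper rotation and re-runs the parallel computation, and a standard measurable selection argument gives the required measurability of $\varphi_0$. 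This bookkeeping is the only real obstacle; the geometric core is the short chain of three elementary inequalities above.
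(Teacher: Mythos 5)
Your core construction is correct, and it is essentially the standard geometric argument behind this statement (which the paper itself does not prove: it is quoted from Tanaka \cite{tanaka} and from \cite[Lemma 2.6]{fm}). In the case where the frames at $X$ and $Y$ have the same orientation, taking $R$ to be the minimal rotation sending $\hat X=X/|X|$ to $\hat Y=Y/|Y|$, reading $\varphi_0$ off the basis $(I(Y)/|Y|,J(Y)/|Y|)$, the identity $\Gamma(Y,\varphi+\varphi_0)=|Y|\,R(\Gamma(X,\varphi)/|X|)$ and your three elementary bounds $\bigl||X|-|Y|\bigr|\le|X-Y|$, $|u-Ru|\le|\hat X-\hat Y|$, $|\hat X-\hat Y|\le 2|X-Y|/\max(|X|,|Y|)$ indeed give the constant $3$, and the measurability of $\varphi_0$ is unproblematic.

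The genuine gap is your treatment of the opposite-orientation case. Replacing $R$ by an improper orthogonal map $\tilde R$ with $\tilde R\hat X=\hat Y$ does make $\varphi_0$ well defined and preserves the identity $\Gamma(Y,\varphi+\varphi_0)=|Y|\,\tilde R(\Gamma(X,\varphi)/|X|)$, but the estimate collapses: the restriction of $\tilde R$ to $\hat X^\perp\to\hat Y^\perp$ is orientation-reversing, hence for $Y$ close to $X$ it is essentially a reflection of the plane and moves some unit vector of $\hat X^\perp$ by a distance close to $2$, no matter how small $|X-Y|$ is; so $|u-\tilde Ru|$ is not $O(|\hat X-\hat Y|)$ and the ``parallel computation'' does not go through. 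Worse, no choice of $\varphi_0$ can repair this: if the two frames induce opposite orientations, the curves $\varphi\mapsto\Gamma(X,\varphi)$ and $\varphi\mapsto\Gamma(Y,\varphi+\varphi_0)$ are traversed in opposite senses (after projecting onto $X^\perp$ they have winding numbers $+1$ and $-1$), so $\sup_\varphi|\Gamma(X,\varphi)-\Gamma(Y,\varphi+\varphi_0)|$ stays bounded below by a quantity of order $|X|$ even as $|X-Y|\to 0$, contradicting the bound $3|X-Y|$. Moreover, whenever the orientation $\det(\hat X,I(X)/|X|,J(X)/|X|)$ is non-constant, connectedness of $\Sp^2$ produces pairs $X,Y$ with opposite orientations and $|X-Y|$ arbitrarily small, so the mixed-orientation piece of your partition cannot be handled at all. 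The lemma is thus only true for a frame of fixed orientation, e.g. $J(X)=\frac{X}{|X|}\wedge I(X)$ as in \cite{fm}, and your proof should simply assume this (note that this sits uneasily with the paper's stated convention $I(-X)=-I(X)$, $J(-X)=-J(X)$, which forces the orientation to be odd in $X$; the convention actually compatible with the lemma is $I(-X)=-I(X)$, $J(-X)=J(X)$).
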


The following fundammental estimates, on which our results
rely, are proved in the appendix.

\begin{lem}\label{fundest}
Assume {\bf (A1-A2-A3)} and (\ref{a4}). There exists a constant
$C=C(\kappa_1,\kappa_2)$ such that the following estimates hold.

(i) For $v,v_*,\tv,\tv_* \in \rd$,
\begin{eqnarray}
&&\int_0^\infty dz \int_0^{2\pi} d\varphi |c(v,v_*,z,\varphi)|^2 \leq 
C|v-v_*|^{2} \Phi(|v-v_*|), \label{esti1} \\
&&\int_0^\infty dz \int_0^{2\pi} d\varphi |c(v,v_*,z,\varphi)
- c(\tv,\tv_*,z,\varphi+\varphi_0(v-v_*,\tv-\tv_*))|^2 \label{esti2} \\
&&\hskip3cm 
\leq C (|v-\tv|^2+|v_*-\tv_*|^2) (\Psi(|v-v_*|)+\Psi(|\tv-\tv_*|)),\ala
&& \int_0^\infty dz \left|\int_0^{2\pi} d\varphi \; c(v,v_*,z,\varphi) 
\right| \leq 
C|v-v_*| \Phi(|v-v_*|), \label{esti3} \\
&& \int_0^\infty dz \left| \int_0^{2\pi} d\varphi [c(v,v_*,z,\varphi)
- c(\tv,\tv_*,z,\varphi)] \right| \label{esti4} \\
&&\hskip3cm 
\leq C (|v-\tv|+|v_*-\tv_*|) (\Psi(|v-v_*|)+\Psi(|\tv-\tv_*|)). \nonumber
\end{eqnarray}

(ii) For any $\phi \in C^2_b$, any $v,v_* \in \rd$,
\begin{equation}\label{atfini1}
|\ctA \phi (v,v_*)| \leq C \Phi(|v-v_*|)\left(|v-v_*| . ||\phi'||_\infty + 
|v-v_*|^2 ||\phi''||_\infty\right).
\end{equation}

(iii) For any $\phi \in C^2_c$ with {\rm supp} $\phi \subset \{|v|\leq x\}$,
for all $v,v_*\in \rd$,
\begin{eqnarray}\label{atfini2}
|\ctA \phi (v,v_*)| &\leq& C \left( ||\phi'||_\infty |v-v_*| 
+||\phi''||_\infty 
|v-v_*|^2 \right) \Phi(|v-v_*|) \indiq_{\{|v|\leq 2x\}}  \ala
&&+ C ||\phi||_\infty \frac{|v-v_*|^2 \Phi(|v-v_*|)}{|v|^2} 
\indiq_{\{|v|\geq 2x\}}.
\end{eqnarray}
\end{lem}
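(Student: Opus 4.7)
My plan is to derive all four bounds in (i) via the substitution $\theta = G(z/\Phi(|v-v_*|))$, equivalently $z = \Phi(|v-v_*|)H(\theta)$, with Jacobian $dz = -\Phi(|v-v_*|)\beta(\theta)\,d\theta$; any integral of the form $\int_0^\infty F(\theta(z))\,dz$ becomes $\Phi(|v-v_*|)\int_0^\pi F(\theta)\beta(\theta)\,d\theta$. Writing $a = -\tfrac{1-\cos\theta}{2}(v-v_*) + \tfrac{\sin\theta}{2}\Gamma(v-v_*,\varphi)$ and using $\Gamma(X,\varphi)\perp X$, $|\Gamma(X,\varphi)|=|X|$, and $\int_0^{2\pi}\Gamma(X,\varphi)\,d\varphi = 0$, I get $|a|^2 = \tfrac{1-\cos\theta}{2}|v-v_*|^2$ and $\int_0^{2\pi}a\,d\varphi = -\pi(1-\cos\theta)(v-v_*)$. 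Combined with $\int_0^\pi (1-\cos\theta)\beta(\theta)\,d\theta \leq \tfrac{\kappa_1}{2}$ from \textbf{(A2)}, these deliver (\ref{esti1}) and (\ref{esti3}) immediately.

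For (\ref{esti2}), set $X = v-v_*$, $Y = \tv-\tv_*$, $\theta = G(z/\Phi(|X|))$, $\tilde\theta = G(z/\Phi(|Y|))$, and $\varphi_0 = \varphi_0(X,Y)$ from Lemma \ref{tanana}. Then $c(v,v_*,z,\varphi) - c(\tv,\tv_*,z,\varphi+\varphi_0)$ splits algebraically into
\begin{equation*}
-\tfrac{1-\cos\theta}{2}(X-Y) + \tfrac{\cos\theta-\cos\tilde\theta}{2}Y + \tfrac{\sin\theta}{2}\bigl[\Gamma(X,\varphi)-\Gamma(Y,\varphi+\varphi_0)\bigr] + \tfrac{\sin\theta-\sin\tilde\theta}{2}\Gamma(Y,\varphi+\varphi_0).
\end{equation*}
Lemma \ref{tanana} bounds the third piece in norm by $\tfrac{3\sin\theta}{2}|X-Y|$; after squaring and the $z$-change of variables above, the first and third pieces together produce $\leq C|X-Y|^2[\Phi(|X|)+\Phi(|Y|)]$ (symmetrizing between $\Phi(|X|)$ and $\Phi(|Y|)$). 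For the second and fourth, using $|\cos\theta-\cos\tilde\theta|\leq\min(\theta,\tilde\theta)|\theta-\tilde\theta|$ and $|\sin\theta-\sin\tilde\theta|\leq|\theta-\tilde\theta|$, their $z$-integrals reduce to $\int_0^\infty\bigl(G(z/\Phi(|X|))-G(z/\Phi(|Y|))\bigr)^2\,dz$, controlled via \textbf{(A3)} by $\kappa_2(\Phi(|X|)-\Phi(|Y|))^2/(\Phi(|X|)+\Phi(|Y|))$, multiplied by $|Y|^2$ (or $\min(|X|,|Y|)^2$ for the cosine term). The three resulting contributions match exactly the three summands on the left-hand side of (\ref{a4}), so (\ref{a4}) yields the bound $C|X-Y|^2[\Psi(|X|)+\Psi(|Y|)]$; then $|X-Y|^2\leq 2(|v-\tv|^2+|v_*-\tv_*|^2)$ concludes. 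Estimate (\ref{esti4}) follows by the same splitting un-squared, with $\int_0^{2\pi}[\Gamma(X,\varphi)-\Gamma(Y,\varphi)]\,d\varphi = 0$ making Lemma \ref{tanana} unnecessary there.

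For (ii), using the representation of $\ctA\phi$ from Lemma \ref{rewriteA}, a second-order Taylor expansion gives $|\phi(v+c)-\phi(v)-c\cdot\nabla\phi(v)|\leq\tfrac12\|\phi''\|_\infty|c|^2$, so the integral term is bounded by $C\|\phi''\|_\infty|v-v_*|^2\Phi(|v-v_*|)$ through (\ref{esti1}); the transport term is trivially $\leq\kappa_0\|\phi'\|_\infty|v-v_*|\Phi(|v-v_*|)$. For (iii), on $\{|v|\leq 2x\}$ the (ii)-bound suffices. On $\{|v|\geq 2x\}$, since $\mathrm{supp}\,\phi\subset\{|w|\leq x\}$, both $\phi(v)$ and $\nabla\phi(v)$ vanish; the integrand reduces to $\phi(v+c)$, nonzero only on $\{|c|\geq|v|/2\}$. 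Bounding $|\phi(v+c)|\leq\|\phi\|_\infty$ and estimating the $(z,\varphi)$-measure of this set by $\tfrac{4}{|v|^2}\int|c|^2\,dz\,d\varphi\leq C|v-v_*|^2\Phi(|v-v_*|)/|v|^2$ via Chebyshev and (\ref{esti1}) yields the second term of (\ref{atfini2}). The main obstacle is (\ref{esti2}): orchestrating the four-term split so that \textbf{(A3)} can be applied once and the outcome reproduces exactly the algebraic sum on the left of (\ref{a4}), permitting a single clean invocation of that hypothesis.
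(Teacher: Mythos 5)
Your treatment of (\ref{esti1}), (\ref{esti3}), part (ii) and part (iii) is correct and is exactly the paper's argument (substitution $\theta=G(z/\Phi(|v-v_*|))$, the identities for $|a|^2$ and $\int_0^{2\pi}a\,d\varphi$, Taylor for (ii), and the Chebyshev-type bound $\indiq_{\{|v|\geq 2x,\,|v'|\leq x\}}\leq 4|a|^2/|v|^2$ for (iii)).

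The gap is in (\ref{esti2}) (and, identically, in (\ref{esti4})), precisely at the point you flag as ``the main obstacle''. First, the inequality $|\cos\theta-\cos\tilde\theta|\leq\min(\theta,\tilde\theta)\,|\theta-\tilde\theta|$ is false (take $\theta\to 0$, $\tilde\theta\to\pi$: the left side tends to $2$, the right side to $0$); the correct elementary bounds are $|\cos\theta-\cos\tilde\theta|\leq|\theta-\tilde\theta|$ or $\leq\frac12|\theta^2-\tilde\theta^2|$. Moreover, even granting some refinement, $\min(\theta,\tilde\theta)$ is an angle and has nothing to do with $\min(|v-v_*|,|\tv-\tv_*|)$, so it cannot convert the coefficient of the cosine piece into $\min(|v-v_*|^2,|\tv-\tv_*|^2)$. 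Second, with the correct trig bounds both the cosine and sine pieces come out as $|\tv-\tv_*|^2\int_0^\infty\bigl(G(z/\Phi(|v-v_*|))-G(z/\Phi(|\tv-\tv_*|))\bigr)^2dz$, i.e. after {\bf (A3)} they carry the factor $|\tv-\tv_*|^2$, whereas the first summand of (\ref{a4}) only controls $\min(|v-v_*|^2,|\tv-\tv_*|^2)\,\frac{(\Phi(|v-v_*|)-\Phi(|\tv-\tv_*|))^2}{\Phi(|v-v_*|)+\Phi(|\tv-\tv_*|)}$; when $|\tv-\tv_*|>|v-v_*|$ your contributions do not ``match the three summands on the left-hand side of (\ref{a4})'' and the conclusion does not follow from (\ref{a4}) as stated. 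The missing ingredient is the paper's symmetry argument: redo the same four-term decomposition with the roles of $(v,v_*)$ and $(\tv,\tv_*)$ exchanged (carry the $\Gamma$-difference with $\sin\tilde\theta$ and the $(X-Y)$ term with $1-\cos\tilde\theta$, Lemma \ref{tanana} applying equally well), which yields the same bound with $|v-v_*|^2$ in place of $|\tv-\tv_*|^2$; taking the minimum of the two bounds produces the coefficient $\min(|v-v_*|^2,|\tv-\tv_*|^2)$, and only then does (\ref{a4}) apply. The same symmetrization is needed, unsquared, to turn $|\tv-\tv_*|\,|\Phi(|v-v_*|)-\Phi(|\tv-\tv_*|)|$ into $\min(|v-v_*|,|\tv-\tv_*|)\,|\Phi(|v-v_*|)-\Phi(|\tv-\tv_*|)|$ in (\ref{esti4}). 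A minor additional point: the term $|X-Y|^2[\Phi(|v-v_*|)+\Phi(|\tv-\tv_*|)]$ is not literally the second summand of (\ref{a4}) (which has $(|X|-|Y|)^2$); you bound it by the right-hand side of (\ref{a4}) via the observation, noted in the paper, that (\ref{a4}) forces $\Phi\leq\Psi$.
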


We now state again some estimates that will be usefull when 
passing to the limit
in some cutoff Boltzmann equations.

\begin{lem}\label{moinsfundest}
We assume {\bf (A1-A2-A3)} and (\ref{a4}). For $k\geq 1$ and $x\in\rr_+$, 
we set
\begin{equation}\label{dfhzero}
h_0^k(x):=\pi \int_0^k dz [1- \cos(G[z/\Phi(x)])] \hbox{ and } 
\e_0^k(x) := \int_0^{G[k/\Phi(x)]} \theta^2\beta(\theta) d\theta
\end{equation}
There exists a constant $C=C(\kappa_1,\kappa_2)$ such that for all
$v,v_* \in \rd$, all $x,y\in \rr_+$, all $k \geq 1$,
\begin{eqnarray}
&&\int_k^\infty dz \intzdp d\varphi |c(v,v_*,z,\varphi)|^2 \leq 
C|v-v_*|^{2} \Phi(|v-v_*|) \e_0^k(|v-v_*|) \label{mesti1} \\
&& | xh_0^k(x) - yh_0^k(y)|\leq C |x-y|(\Psi(x)+\Psi(y)),
\label{mesti2} \\
&& | \kappa_0 x \Phi(x) - x h_0^k(x) | \leq C x \Phi(x) \e_0^k(x). 
\label{mesti3}
\end{eqnarray}
Furthermore, $\e_0^k$ is bounded by $\kappa_1$, and for all $x\in \rr_+$,
$\lim_k \e_0^k(x)=0$.
\end{lem}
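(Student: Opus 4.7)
The unifying tool is the change of variable $\theta=G(z/\Phi(x))$, equivalently $z=\Phi(x)H(\theta)$, for which $dz=-\Phi(x)\beta(\theta)\,d\theta$ and the $\theta$-range $(0,\pi]$ corresponds to the $z$-range $[0,+\infty)$. Two preliminary computations feed all three estimates. First, since $I(X),J(X)$ are orthogonal to $X$ with $|I(X)|=|J(X)|=|X|$, the cross term in $|a(v,v_*,\theta,\varphi)|^2$ vanishes and one obtains the $\varphi$-independent identity $|a(v,v_*,\theta,\varphi)|^2 = \tfrac{1-\cos\theta}{2}|v-v_*|^2$. Second, since $\int_0^{2\pi}\Gamma(X,\varphi)\,d\varphi = 0$,
\[
\int_0^{2\pi} c(v,v_*,z,\varphi)\,d\varphi = -\pi\bigl[1-\cos G(z/\Phi(|v-v_*|))\bigr]\,(v-v_*).
\]

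For (\ref{mesti1}), integrating the first identity in $\varphi$ and applying the change of variable with $x=|v-v_*|$ gives
\[
\int_k^\infty\!\!dz\int_0^{2\pi}\!\!d\varphi\,|c|^2 = \pi|v-v_*|^2\Phi(|v-v_*|)\int_0^{G(k/\Phi(|v-v_*|))}(1-\cos\theta)\beta(\theta)\,d\theta,
\]
and $1-\cos\theta \le \theta^2/2$ yields the desired bound in terms of $\e_0^k$. For (\ref{mesti3}), the same substitution rewrites $xh_0^k(x) = \pi x\Phi(x)\int_{G(k/\Phi(x))}^\pi (1-\cos\theta)\beta(\theta)\,d\theta$, while $\kappa_0 x\Phi(x) = \pi x\Phi(x)\int_0^\pi(1-\cos\theta)\beta(\theta)\,d\theta$ by definition of $\kappa_0$; subtracting and using $1-\cos\theta\le \theta^2/2$ again gives $|\kappa_0 x\Phi(x)-xh_0^k(x)|\le \tfrac{\pi}{2}x\Phi(x)\e_0^k(x)$.

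For (\ref{mesti2}), I would reduce the scalar bound to the vector bound (\ref{esti4}) of Lemma \ref{fundest} by specialization. Take $v=(x,0,0)$, $\tv=(y,0,0)$, $v_*=\tv_*=0$ in $\rd$; the second preliminary identity shows that $-\int_0^k\!dz\int_0^{2\pi}\!d\varphi\,c(v,0,z,\varphi)=(xh_0^k(x),0,0)$, and likewise for $(\tv,0)$, so
\[
|xh_0^k(x)-yh_0^k(y)| \le \int_0^k\!\!dz\left|\int_0^{2\pi}\!\!d\varphi\,[c(v,0,z,\varphi)-c(\tv,0,z,\varphi)]\right| \le \int_0^\infty\!\!dz\left|\int_0^{2\pi}\!\!d\varphi\,[c(v,0,z,\varphi)-c(\tv,0,z,\varphi)]\right|,
\]
the last inequality being free because the inner absolute value is nonnegative; applying (\ref{esti4}) (with $|v-\tv|+|v_*-\tv_*|=|x-y|$ and $|v-v_*|=x$, $|\tv-\tv_*|=y$) delivers $C|x-y|(\Psi(x)+\Psi(y))$. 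Finally, $\e_0^k(x)\le\int_0^\pi\theta^2\beta(\theta)\,d\theta=\kappa_1$ by {\bf (A2)}, and since $G:[0,+\infty)\to(0,\pi]$ is continuous with $G(+\infty)=0$ (as $H(0^+)=+\infty$ by {\bf (A2)}), $G(k/\Phi(x))\downarrow 0$ as $k\to\infty$, forcing $\e_0^k(x)\to 0$. The only even mildly delicate point is the reduction for (\ref{mesti2}): one must specialize \emph{before} invoking (\ref{esti4}), because enlarging the $z$-range from $(0,k)$ to $(0,\infty)$ is only harmless after the absolute value has been placed \emph{inside} the $z$-integral; everything else is routine bookkeeping around the substitution $\theta=G(z/\Phi(x))$.
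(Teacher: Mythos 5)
Your proposal is correct. The treatment of (\ref{mesti1}), (\ref{mesti3}) and of the final claims on $\e_0^k$ coincides with the paper's: the same substitution $\theta=G(z/\Phi(x))$ gives $\int_k^\infty dz\intzdp d\varphi\,|c|^2=\pi|v-v_*|^2\Phi(|v-v_*|)\int_0^{G(k/\Phi(|v-v_*|))}(1-\cos\theta)\beta(\theta)d\theta$ and $xh_0^k(x)=\pi x\Phi(x)\int_{G(k/\Phi(x))}^\pi(1-\cos\theta)\beta(\theta)d\theta$, after which $1-\cos\theta\leq\theta^2$ and the definition of $\kappa_0$ finish exactly as in the paper. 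Where you genuinely diverge is (\ref{mesti2}): the paper argues directly on the scalar quantities, splitting $|xh_0^k(x)-yh_0^k(y)|\leq|x-y|(h_0^k(x)+h_0^k(y))+\min(x,y)|h_0^k(x)-h_0^k(y)|$, then bounding $h_0^k(x)\leq\pi\kappa_1\Phi(x)$ and $|h_0^k(x)-h_0^k(y)|\leq C|\Phi(x)-\Phi(y)|$ by recycling the computations (\ref{jab1})--(\ref{jab3}) from the proof of (\ref{esti4}), and finally invoking (\ref{a4}). You instead obtain (\ref{mesti2}) as a specialization of the already-proved vector estimate (\ref{esti4}), via the identity $\int_0^k dz\intzdp d\varphi\,c(v,0,z,\varphi)=-(v)\,h_0^k(|v|)$ for collinear points $v=(x,0,0)$, $\tv=(y,0,0)$, $v_*=\tv_*=0$, together with the legitimate enlargement of the $z$-range after placing the absolute value inside the $z$-integral (a point you rightly flag, since the truncation at $k$ must be discarded only at that stage). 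Both arguments ultimately rest on the same ingredients ((\ref{jab1})--(\ref{jab3}) and (\ref{a4})); yours buys economy by not redoing the cosine-difference computation, at the price of being slightly less self-contained, while the paper's direct route makes the role of hypothesis (\ref{a4}) explicit at the level of $h_0^k$ itself. Incidentally, your identity for $\int_0^k\intzdp c$ is the same one the paper uses later in Step 5 of the proof of Lemma \ref{unili}, so the specialization is fully consistent with the rest of the text.
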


This Lemma will be checked in the appendix,
as the following continuity property of $\ctA$.

\begin{lem}\label{acont}
Assume {\bf (A1-A2-A3-A4)}$(\gamma)$, for
some $\gamma\in (-3,0]$, and consider $g\in \pdeux\cap \cJ_\gamma$. 
Then for any $\phi \in C^2_c$, $v\mapsto \int_{\rd} g(dv_*) \ctA\phi(v,v_*)$
is continuous on $\rd$.
\end{lem}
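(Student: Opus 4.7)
The goal is to prove continuity at each $v\in\rd$ of $F(v):=\int_{\rd}g(dv_*)\,\ctA\phi(v,v_*)$. The plan is to apply dominated convergence along an arbitrary sequence $v_n\to v$, combining pointwise convergence off the diagonal with a uniform--in--$n$ near--diagonal estimate. The case $\gamma=0$ is essentially immediate: $\Phi\leq\kappa_3$, and (\ref{atfini1}) produces a uniform $g$--integrable dominator of the form $C_\phi(1+|v_*|^2)$, while pointwise convergence follows from Taylor and (\ref{esti1}). The interesting case is $\gamma<0$, which one first remarks forces $g(\{v\})=0$ (otherwise $J_\gamma(g)=\infty$).

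For pointwise convergence at any fixed $v_*\neq v$, continuity of $\Phi$ (as in the examples of Lemma \ref{raiso}) gives $c(v_n,v_*,z,\varphi)\to c(v,v_*,z,\varphi)$ for each $(z,\varphi)$. The Taylor bound $|\phi(w+c)-\phi(w)-c\cdot\nabla\phi(w)|\leq\tfrac12\|\phi''\|_\infty|c|^2$ together with (\ref{esti1}) -- which for $n$ large supplies a $(z,\varphi)$--dominator independent of $n$, since $|v_n-v_*|$ stays bounded away from $0$ -- yields inner dominated convergence, and then the continuity of the correction $\kappa_0\Phi(|v_n-v_*|)\nabla\phi(v_n)\cdot(v_n-v_*)$ gives $\ctA\phi(v_n,v_*)\to\ctA\phi(v,v_*)$. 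Since $g(\{v\})=0$, this is pointwise convergence $g$--a.e.

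For the outer integral, (\ref{atfini2}) with $x$ chosen so that $|v_n|\leq 2x$ for all $n$ and ${\rm supp}\,\phi\subset\{|u|\leq x\}$, combined with {\bf (A4$(\gamma)$)}, yields
\[
|\ctA\phi(v_n,v_*)|\leq C_\phi\kappa_3\bigl(|v_n-v_*|^{1+\gamma}+|v_n-v_*|^{2+\gamma}\bigr).
\]
This has no $n$--uniform pointwise envelope near $v$ -- the crux of the proof -- so I fix $r>0$ small and split $\rd$ into $\{|v_*-v|\geq r\}$ and $\{|v_*-v|<r\}$. On the first set, for $n$ large with $|v_n-v|\leq r/2$ one has $|v_n-v_*|\geq r/2$, the bound collapses to $C_{r,\phi}(1+|v_*|^2)$, which is $g$--integrable by $m_2(g)<\infty$, and classical dominated convergence closes that piece. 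On the second set, for each $\alpha\in\{1+\gamma,2+\gamma\}$ I bound the integral uniformly in $n$ by a quantity $\omega(r)\to 0$: if $\alpha\geq 0$ use the pointwise bound $|v_n-v_*|^\alpha\leq(2r)^\alpha$ together with the vanishing mass $g(\{|v_*-v|<r\})\to 0$ (no atom at $v$); if $\alpha<0$ write $|v_n-v_*|^\alpha=|v_n-v_*|^\gamma|v_n-v_*|^{\alpha-\gamma}\leq(2r)^{\alpha-\gamma}|v_n-v_*|^\gamma$ and apply the global inequality $\int|v_n-v_*|^\gamma g(dv_*)\leq J_\gamma(g)$ to obtain at most $(2r)^{\alpha-\gamma}J_\gamma(g)$. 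The standard $\varepsilon$--argument -- first choose $r$ small so that $\omega(r)<\varepsilon/2$, then $n$ so large that the $\{|v_*-v|\geq r\}$ piece is below $\varepsilon/2$ -- concludes. The main obstacle is precisely the moving--singularity issue that prevents a single pointwise dominator, bypassed by localizing in the shrinking ball $\{|v_*-v|<r\}$ and extracting smallness from the two quantitative consequences of $g\in\cJ_\gamma$ used above.
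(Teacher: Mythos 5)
Your treatment of the outer $v_*$-integral (splitting into $\{|v_*-v|\geq r\}$ and $\{|v_*-v|<r\}$, using $m_2(g)<\infty$ away from the diagonal and $J_\gamma(g)$ together with the absence of an atom of $g$ at $v$ near it) is sound, and it is a legitimate alternative to the paper's device of replacing $\Phi$ by $\Phi_\e(x):=\Phi(\max(x,\e))$ and letting $\e\to 0$. The genuine gap is in the inner step: you assert that continuity of $\Phi$ gives $c(v_n,v_*,z,\varphi)\to c(v,v_*,z,\varphi)$ for each fixed $(z,\varphi)$. But $c(v,v_*,z,\varphi)=a(v,v_*,G(z/\Phi(|v-v_*|)),\varphi)$ contains $\Gamma(v-v_*,\varphi)=(\cos\varphi) I(v-v_*)+(\sin\varphi) J(v-v_*)$, and the frame $(I(X),J(X))$ in (\ref{dfvprime}) is only measurable; it cannot be chosen continuously on $\rd\setminus\{0\}$ (there is no continuous tangent frame on the sphere), so $\Gamma(v_n-v_*,\varphi)$ need not converge to $\Gamma(v-v_*,\varphi)$ even though $v_n-v_*\to v-v_*\neq 0$. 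As written, your pointwise-convergence claim can fail, and with it the dominated-convergence step on the far-from-diagonal region (the same issue is hidden in your ``essentially immediate'' $\gamma=0$ case). A minor additional point: continuity of $\Phi$ on $(0,\infty)$ should be derived from {\bf (A4)}$(\gamma)$ itself, which gives $|\Phi(x)-\Phi(y)|\leq \kappa_3 |x-y|(x^\gamma+y^\gamma)/\min(x,y)$, not from the examples of Lemma \ref{raiso}, which are only illustrations.

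This is precisely the point the paper's proof works around, and it is why Lemma \ref{tanana} is invoked there: since the $\varphi$-integral over $[0,2\pi)$ is invariant under any (possibly $(v,v_*,z)$-dependent) shift of the angle (second equality in (\ref{agood})), one compares $c(v_n,v_*,z,\varphi+\varphi_0(v-v_*,v_n-v_*))$ with $c(v,v_*,z,\varphi)$; Tanaka's estimate $|\Gamma(v-v_*,\varphi)-\Gamma(v_n-v_*,\varphi+\varphi_0(v-v_*,v_n-v_*))|\leq 3|v_n-v|$ then yields pointwise convergence of the shifted integrand, after which your dominated-convergence scheme goes through. (Alternatively, one may observe that the $\varphi$-averaged quantity $\int_0^{2\pi}\big[\phi(v+c)-\phi(v)-c\cdot\nabla\phi(v)\big]d\varphi$ does not depend on the choice of $I,J$ and prove its continuity in $v$ directly; but some argument of this kind must be supplied, and your proposal contains none.)
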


\section{A short and unrigorous proof}\label{notrig}

We give here the main idea of this paper in the cutoff case. 
In the case without cutoff, we 
are not able to give a direct proof (not relying on the
use of Poisson measures, martingale problems,... see the next section).
We consider a solution $(f_t)_{t\in[0,T]}$ 
to the Boltzmann equation. Then 
\begin{equation}
\frac{d}{dt} \intrd \phi(v) f_t(dv) = \intrd f_t(dv) \intrd f_t(dv_*)
\ctA\phi(v,v_*), 
\end{equation}
and 
we can formally 
write
\begin{equation}
\ctA \phi(v,v_*) = 2\pi\int_0^\infty dz \intzdp \frac{d\varphi}{2\pi}
[\phi(v+c(v,v_*,z,\varphi)) -\phi(v)].
\end{equation}
This roughly means the following: take a particle at random at time $t$,
and call its velocity $V_t$. Then $V_t$ is $f_t$-distributed. Then
for all $z\in\rr_+$, it will collide, at rate $2\pi dz$,  with another particle
with velocity $V_t^*$ (independant and also $f_t$-distributed), it will
choose $\alpha$ uniformly in $[0,2\pi)$, and its new velocity after
the collision will be $Z_t(z):=V_t+c(V_t,V_t^*,z,\alpha)$. Let us call 
$\Delta(f_t,z)$ the law of $Z_t(z)$.

Then if we have two solutions $(f_t)_{t\in[0,T]},(\tf_t)_{t\in[0,T]}$
to (\ref{be}), it is natural to think that
\begin{eqnarray}\label{superw}
\frac{d}{dt} W^2_2(f_t,\tf_t) \leq \int_0^\infty 2\pi dz [W_2^2(\Delta(f_t,z),\Delta(\tf_t,z)) - W^2_2(f_t,\tf_t)].
\end{eqnarray}
Indeed, at each time $t$, for each $z$, $f_t$ and $\tf_t$ are replaced by
$\Delta(f_t,z)$ and $\Delta(\tf_t,z)$ at rate $2\pi dz$.

\vip

Such an inequality can be rigorously and easily obtained
when truncating the integral $\int_0^\infty dz$ into $\int_0^k dz$,
by using the dual formulation of the Wasserstein distance (see Villani
\cite{villani2}).

\vip

We then claim that for all pair of laws $f,\tf$ on $\rd$,
\begin{eqnarray}\label{afaire}
\int_0^\infty 2\pi dz [W_2^2(\Delta(f,z),\Delta(\tf,z)) - W^2_2(f,\tf)]
\leq C W_2^2(f,\tf) [J_\gamma(f)+J_\gamma(\tf)],
\end{eqnarray}
where $C$ depends only on $\kappa_1,\kappa_2,\kappa_3$, see 
{\bf (A1-A2-A3-A4)}$(\gamma)$.
Gathering (\ref{superw}) and (\ref{afaire}), Theorem \ref{maintheo}
would follow immediately from the generalized Gronwall Lemma \ref{ggl}.

Let us prove (\ref{afaire}). Consider thus $f,\tf$ two probability distributions
on $\rd$, and two couples $(V,\tV)$ and $(V_*,\tV_*)$
with $V$ and $V_*$ $f$-distributed, $\tV$ and $\tV_*$ $\tf$-distributed, with
$(V,\tV)$ independent of $(V_*,\tV_*)$, and such that 
$E[|V-\tV|^2]=E[|V_*-\tV_*|^2]=W_2^2(f,\tf)$.
Choose $\alpha$ uniformly distributed on $[0,2\pi)$ (independent of everything else),
and set $\talpha=\alpha+\varphi_0(V-V_*,\tV-\tV_*)$ (modulo $2\pi$), where
$\varphi_0$ was introduced in Lemma \ref{tanana}.
Then $\talpha$ is also uniformly distributed on $[0,2\pi)$, and is also
independent of $(V,\tV,V_*,\tV_*)$. As a consequence,
$Z(z)=V+c(V,V_*,z,\alpha)$ is $\Delta(f,z)$-distributed, and 
$\tZ(z)=\tV+c( \tV,\tV_*,z,\talpha)$ is $\Delta(\tf,z)$-distributed, so that
\begin{equation}
W^2_2(\Delta(f,z),\Delta(\tf,z))-W^2_2(f,\tf)\leq E[|Z(z)-\tZ(z)|^2-|V-\tV|^2]
=:\delta(z).
\end{equation}
But a simple computation using (\ref{esti2}) and (\ref{esti4}) 
shows that for some constant
$C=C(\kappa_1,\kappa_2,\kappa_3)$,
\begin{eqnarray}
\int_0^\infty dz \delta(z)&=&\int_0^\infty dz \int_0^{2\pi} \frac {d\varphi}{2\pi}
E\Big[| c(V,V_*,z,\varphi)-c(\tV, \tV_*,z,\varphi+\varphi_0(V-V_*,\tV-\tV_*))|^2 \ala
&&\hskip1cm
+ 2 (V-\tV)(c(V,V_*,z,\varphi)-c(\tV,\tV_*,z,\varphi+\varphi_0(V-V_*,\tV-\tV_*))) \Big]\ala
&&\leq C E\left[(|V-\tV|^2+|V_*-\tV_*|^2 ) (\Psi(|V-V_*|)+\Psi(|\tV-\tV_*|) ) \right]
\ala
&&\leq C E\left[|V-\tV|^2 (|V-V_*|^\gamma+|\tV-\tV_*|^\gamma ) \right]
\end{eqnarray}
by a symmetry argument and {\bf (A4)}$(\gamma)$. Using finally 
the definition of $J_\gamma$ and the independance of $(V,\tV)$ and $(V_*,\tV_*)$,
one easily deduces that
\begin{eqnarray}
\int_0^\infty dz \delta(z)&\leq&
C E\left[|V-\tV|^2\right] 
(\sup_v E[|v-V_*|^\gamma]+\sup_{\tv}E[|\tv-\tV_*|^\gamma] )\ala
&=&C W_2^2(f,\tf)[J_\gamma(f)+
J_\gamma(\tf)].
\end{eqnarray}
This concludes the proof of (\ref{afaire}).

\section{Coupling Boltzmann processes}
\label{secmain}\setcounter{equation}{0}

To prove Theorem \ref{maintheo}, we will use
some probabilistic arguments,
which is of course a natural way to couple two solutions of the Boltzmann
equation.
We follow the line of Tanaka \cite{tanaka} (see also \cite{fm}),
who was dealing with the Maxwellian case, that is $\Phi\equiv 1$.

\vip

In the whole section, $C$ (resp. $C_T$) stands for a constant
whose value may change from line to line, and which depend only on 
$\kappa_1,\kappa_2,\kappa_3,\gamma$ 
(resp. $\kappa_1,\kappa_2,\kappa_3,\gamma,T$).

\vip

Recall that $\dt=\mathbb{D}([0,T],\rd)$ stands for the Skorokhod space
of c\`adl\`ag functions, see Ethier-Kurtz 
\cite{ek} for many details on this topic. 
We consider a filtered probability space $(\Omega,\cF,(\cF_t)_{t\in[0,T]},P)$.

\begin{nota}\label{notagpro}
Let $g=(g_t)_{t\in [0,T]}$ be a measurable family of probability
measures on $\rd$.

(i) We say that $N$ is a $g$-Poisson measure
if it is a $(\cF_t)_{t\in[0,T]}$-Poisson measure on $[0,T]\times \rd \times
[0,\infty)\times [0,2\pi)$ with intensity measure 
$ds g_s(dv) dz d\varphi$. We denote by $\tN$ its compensated Poisson
measure. 

(ii) For $k\geq 1$, $V_0$ a $\cF_0$-measurable $\rd$-valued random variable
and $N$ a $g$-Poisson measure, we define
$(V^k_t)_{t\in[0,T]}$ the unique solution to
\begin{eqnarray}\label{dfvk}
V^k_t&=&V_0 + \intot\intrd\int_0^k\int_0^{2\pi} c(V^k_\sm,v,z,\varphi)
N(ds,dv,dz,d\varphi).
\end{eqnarray}
Then $(V^k_t)_{t\in[0,T]}$ is adapted to $(\cF_t)_{t\in[0,T]}$ 
and belongs a.s. to $\dt$. We will refer to $(V^k_t)_{t\in[0,T]}$
as the $(V_0,g,k,N)$-process. 
Its law does not depend on the choice of the probability space, on $N$,
and depends on $V_0$ only through its law.
\end{nota}

The existence and uniqueness of $V^k$ is obvious, because
$N([0,T]\times\rd\times[0,k]\times[0,2\pi))$ is a.s. finite,
so that (\ref{dfvk}) is nothing but a recursive equation. 

We will show the following result at the end of this section.

\begin{lem}\label{cacv}
Assume {\bf (A1-A2-A3-A4$(\gamma)$)}, for some $\gamma \in (-3,0]$.
Consider a weak solution $(f_t)_{t\in[0,T]} \in \lipdeux\cap
 L^1([0,T],\cJ_\gamma)$ 
to (\ref{be}).
Consider any ${\cF_0}$-measurable random variable $V_0\sim f_0$.
Consider a $f$-Poisson measure $N$, and for each $k\geq 1$,
the $(V_0,f,k,N)$-process $(V^k_t)_{t\in [0,T]}$.
For each $t\in [0,T]$, denote by $f^k_t$ the law of $V^k_t$.
Then
\begin{equation}
\lim_{k\to \infty} \sup_{[0,T]} W_2^2 (f_t,f^k_t) =0.
\end{equation}
\end{lem}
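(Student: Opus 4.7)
The plan is to construct a limit process $V_t$, driven by the same Poisson measure $N$ as the $V^k$'s, whose time marginals are $f_t$, and to show $E[|V_t-V^k_t|^2]\to 0$; the lemma then follows from the elementary bound $W_2^2(f_t,f^k_t)\leq E[|V_t-V^k_t|^2]$.

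First, I would define $V$ as the solution of the SDE
\[
V_t = V_0 + \intot \intrd \intzi \intzdp c(V_\sm,v,z,\varphi)\,\tN(ds,dv,dz,d\varphi) - \kappa_0\intot\intrd \Phi(|V_s-v|)(V_s-v)\,f_s(dv)\,ds,
\]
where $\tN$ is the compensated $f$-Poisson measure. Thanks to (\ref{esti1}) and {\bf (A4}$(\gamma)${\bf )} (so that $\Phi(x)\leq\kappa_3 x^\gamma$), the predictable quadratic variation of the martingale part is bounded by $C\intrd f_s(dv)|V_s-v|^{2+\gamma}$, while the drift is bounded by $CJ_\gamma(f_s)(1+|V_s|)$. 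Since $f\in\lipdeux\cap L^1([0,T],\cJ_\gamma)$, the SDE is well-posed with $\sup_{[0,T]}E[|V_t|^2]<\infty$.

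Next, I would compare $V^k$ and $V$ by applying It\^o's formula for pure-jump processes to $|V_t-V^k_t|^2$. The jump integrand splits into a contribution from $\{z\leq k\}$ (where both processes jump) and one from $\{z>k\}$ (where only $V$ jumps, against its compensator). On $\{z\leq k\}$, estimate (\ref{esti2}) controls the squared-jump by $C|V_s-V^k_s|^2(\Psi(|V_s-v|)+\Psi(|V^k_s-v|))$---after inserting the $\varphi_0$-shift from Lemma \ref{tanana} to match the hypothesis of (\ref{esti2})---and (\ref{esti4}) controls the cross term $(V-V^k)\cdot(c(V)-c(V^k))$ analogously. On $\{z>k\}$, (\ref{mesti1}) and (\ref{mesti3}) yield a remainder carrying the factor $\e_0^k$. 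Using $\Psi(x)\leq\kappa_3 x^\gamma$, I obtain
\[
E[|V_t-V^k_t|^2] \leq C\intot J_\gamma(f_s)\,E[|V_s-V^k_s|^2]\,ds + r^k_t,
\]
with $r^k_t\to 0$ uniformly on $[0,T]$ as $k\to\infty$ by dominated convergence (using $\e_0^k\leq\kappa_1$, $\e_0^k\to 0$ pointwise, and the integrability of the relevant moments of $f_s$). Gronwall's lemma then gives $\sup_{[0,T]}E[|V_t-V^k_t|^2]\to 0$.

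Finally, to identify $\mu_t:=\mathrm{Law}(V_t)$ with $f_t$: It\^o's formula together with Lemma \ref{rewriteA} shows that, for every $\phi\in C^2_c$,
\[
\intrd \phi\,d\mu_t = \intrd \phi\,df_0 + \intot\intrd \mu_s(dv)\intrd f_s(dv_*)\,\ctA\phi(v,v_*)\,ds,
\]
while $f_t$ itself solves the same equation (with $\mu_s$ replaced by $f_s$) by symmetrizing the weak formulation (\ref{wbe}). This equation is linear in its first argument, and uniqueness follows from a Wasserstein coupling argument simpler than Theorem \ref{maintheo} (only one measure varies). Hence $\mu_t=f_t$, and combining with the previous step yields $W_2^2(f_t,f^k_t)\leq E[|V_t-V^k_t|^2]\to 0$ uniformly on $[0,T]$. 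The main technical difficulty is the It\^o bookkeeping in the second step: the $\varphi_0$-shift from Lemma \ref{tanana} must be threaded through the cross terms to apply (\ref{esti2}) cleanly, and the drift from $V$'s compensator must exactly match the ``missing'' cutoff contributions up to the $\e_0^k$-error furnished by (\ref{mesti3}).
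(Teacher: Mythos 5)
Your overall architecture (build a limit process $V$ with marginals $f_t$, compare it with the truncated processes in $L^2$, conclude via $W_2^2(f_t,f^k_t)\le E[|V_t-V^k_t|^2]$) is the same as the paper's, but the step you dispose of in one sentence is the real crux, and there your argument has a genuine gap. To identify $\mathrm{Law}(V_t)$ with $f_t$ you observe that both $\mu_t:=\mathrm{Law}(V_t)$ and $f_t$ solve the linear equation (\ref{lbe}) and claim that uniqueness for this linear equation ``follows from a Wasserstein coupling argument simpler than Theorem \ref{maintheo}''. This does not work as stated: to couple two solutions \`a la Tanaka you must first represent each of them as the flow of marginals of a process driven by a Poisson measure, and for an arbitrary measure solution of (\ref{lbe}) --- in particular for $f$ itself, which is only known to be a weak solution --- no such probabilistic representation is available; producing one is precisely what is at stake. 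This is why the paper's Lemma \ref{unili} goes through the Bhatt--Karandikar theorem: well-posedness of the time-inhomogeneous martingale problem $MP((\ctA_t)_{t\in[0,T]},\delta_{v_0})$ (existence by a Picard scheme, uniqueness in law by the truncation/coupling argument, plus the separability condition on a countable family of test functions) is what converts into uniqueness for the weak linear equation, and only then can one conclude $\mu=f$. Your proposal contains no substitute for this machinery.

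A second, less serious point: as written, your comparison step would fail, because if $V$ and $V^k$ are driven by literally the same Poisson measure $N$, the jump difference at an atom $(s,v,z,\varphi)$ is $c(V_{s-},v,z,\varphi)-c(V^k_{s-},v,z,\varphi)$ with the \emph{same} angle, and (\ref{esti2}) does not apply to it; the $\varphi_0$-shift of Lemma \ref{tanana} cannot be ``inserted'' a posteriori, since without it the difference of the two $\Gamma$-terms is not small. The correct coupling, as in Step 5 of the paper's proof of Lemma \ref{unili}, is to define the truncated comparison process so that it jumps with the predictably shifted angle $\varphi+\varphi_0(V_{s-}-v,\tV^k_{s-}-v)$; by Lemma \ref{plp} the shifted measure is again an $f$-Poisson measure, and by the law-invariance recorded in Notation \ref{notagpro} the resulting process still has marginals $f^k_t$, which is what rescues the conclusion about $W_2(f_t,f^k_t)$. (Likewise, existence of your limit SDE is not automatic from the moment bounds you quote; the paper obtains it via a Picard iteration that again threads these angle shifts through each step.)
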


Thus we will study a solution $f$ to (\ref{be}) through its 
related stochastic process $V^k_t$.
We start with some moment computations.

\begin{lem}\label{m2k}
Assume {\bf (A1-A2-A3-A4)$(\gamma)$} for some $\gamma\in (-3,0]$. 
Let $g\in \lipdeux\cap L^1([0,T],\cJ_\gamma)$.
There exists a constant $K_T(g)$ depending only on 
$T,\gamma,\kappa_1,\kappa_2,\kappa_3,g$ such that
for each $k\geq 1$, $V_0\in L^2$,
each $g$-Poisson measure $N$, the $(V_0,g,k,N)$-process $(V^k_t)_{t\in [0,T]}$
satisfies
\begin{equation}
E \left[\sup_{[0,T]} |V_t^k|^2 \right] 
\leq K_T(g) \{1+E[|V_0|^2]\}.
\end{equation}
\end{lem}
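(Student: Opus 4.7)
The strategy is to first establish a uniform-in-$k$ second-moment bound $\sup_{t\in[0,T]} E[|V^k_t|^2]\le K_T(g)(1+E[|V_0|^2])$ by applying It\^o's formula to $|V^k_t|^2$, and then to upgrade this to a bound on $E[\sup_{[0,T]}|V^k_t|^2]$ by splitting the SDE (\ref{dfvk}) into a compensated-martingale part and a drift part (as $\rd$-valued processes) and handling them separately.

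\textbf{First step.} Because $g_s$ is a probability measure and $z$ is restricted to $[0,k]$, the Poisson measure $N$ has total intensity $2\pi kT$ on the integration domain, so $V^k$ is a.s.\ piecewise constant with finitely many jumps and (\ref{dfvk}) needs no further integrability. It\^o applied to $|V^k_t|^2$ yields $|V^k_t|^2=|V_0|^2+L^k_t+\int_0^t a^k_s\,ds$, where $L^k$ is a local martingale and $a^k_s=\int_\rd g_s(dv)\int_0^k dz\int_0^{2\pi} d\varphi(2V^k_s\cdot c+|c|^2)$ with $c=c(V^k_s,v,z,\varphi)$. The key computation is the $\varphi$-average: since $\int_0^{2\pi}\Gamma\,d\varphi=0$ and $\int_0^{2\pi}|\Gamma|^2\,d\varphi=2\pi|v-V^k_s|^2$, direct calculation gives
\begin{equation*}
\int_0^{2\pi}\!\!d\varphi\,(2V^k_s\!\cdot\! c+|c|^2)=\pi(1-\cos\theta)\bigl[|v-V^k_s|^2-2V^k_s\!\cdot\!(v-V^k_s)\bigr],
\end{equation*}
whose modulus is at most $C(1-\cos\theta)(|v|^2+|V^k_s|^2)$. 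After the change of variable $z=x\Phi(|v-V^k_s|)$ and $x=H(\theta)$ one has $\int_0^k dz(1-\cos\theta(z))\le (\kappa_0/\pi)\Phi(|v-V^k_s|)\le C|v-V^k_s|^\gamma$ by \textbf{(A4)}$(\gamma)$; writing $|v|^2\le 2|v-V^k_s|^2+2|V^k_s|^2$ and separating the cases $\gamma\ge-2$ (bound $|v-V^k_s|^{\gamma+2}$ by $1+|v-V^k_s|^2$) and $\gamma\in(-3,-2)$ (use $|v-V^k_s|^{\gamma+2}\le 1+|v-V^k_s|^\gamma$ together with $J_\gamma(g_s)$) produces
\begin{equation*}
|a^k_s|\le C\phi_s(1+|V^k_s|^2),\qquad \phi_s:=1+m_2(g_s)+J_\gamma(g_s)\in L^1([0,T]).
\end{equation*}
Localization by $\tau_n=\inf\{t:|V^k_t|\ge n\}$, Gronwall applied to $E[|V^k_{t\wedge\tau_n}|^2]$, and monotone convergence as $n\to\infty$ then give the desired second-moment bound, uniform in $k$.

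\textbf{Second step.} Decompose $N=\tilde N+ds\,g_s(dv)\,dz\,d\varphi$ in (\ref{dfvk}) to write $V^k_t=V_0+M^k_t+b^k_t$ with $M^k_t:=\int_0^t\!\!\int\!\!\int_0^k\!\!\int c\,\tilde N$ a vector-valued martingale and $b^k_t$ the associated drift, so that $\sup|V^k_t|^2\le 3|V_0|^2+3\sup|M^k_t|^2+3\sup|b^k_t|^2$. For the martingale, Doob's $L^2$ inequality and the Poisson It\^o isometry give $E[\sup|M^k_t|^2]\le 4E\!\int_0^T\!\!ds\int g_s(dv)\int_0^k\!\!dz\int d\varphi\,|c|^2$; estimate (\ref{esti1}) bounds the inner integral by $C|v-V^k_s|^{2+\gamma}$, and the case analysis of Stage~1 together with the second-moment bound controls this by $K_T(g)(1+E[|V_0|^2])$. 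For the drift, the identity $\int_0^{2\pi}c\,d\varphi=-\pi(1-\cos\theta)(v-V^k_s)$ yields $|\int_0^k dz\!\int d\varphi\,c|\le C|v-V^k_s|^{1+\gamma}$, and the case analysis (now with $1+\gamma$ in place of $\gamma+2$) produces $|\dot b^k_s|\le C\psi_s(1+|V^k_s|)$ with $\psi_s:=1+\sqrt{m_2(g_s)}+J_\gamma(g_s)\in L^1$. Since $\sup_t|b^k_t|\le\int_0^T|\dot b^k_s|\,ds$, a Cauchy--Schwarz with weight $\psi_s$ gives
\begin{equation*}
\Bigl(\int_0^T\psi_s(1+|V^k_s|)\,ds\Bigr)^{\!2}\le\|\psi\|_{L^1}\int_0^T\psi_s(1+|V^k_s|)^2\,ds,
\end{equation*}
and taking expectations and invoking Stage~1 bounds $E[\sup|b^k|^2]$ by $K_T(g)(1+E[|V_0|^2])$. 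Adding the three pieces proves the lemma.

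\textbf{Main obstacle.} The delicate point is the drift estimate in Stage~2: a naive $L^1$ bound $|\int c\,d\varphi|\le\int|c|\,d\varphi\lesssim\sqrt{1-\cos\theta}\,|v-V^k_s|$ would demand $\int_0^\pi\theta\beta(\theta)\,d\theta<\infty$, which is exactly the assumption we want to avoid for very soft potentials; it is essential to use the explicit $\varphi$-cancellation so that the factor $1-\cos\theta$ appears linearly and is integrable against $\beta$ under \textbf{(A2)} alone.
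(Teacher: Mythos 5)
Your proof is correct and follows essentially the same route as the paper: the same compensated-martingale-plus-drift decomposition of (\ref{dfvk}), Doob's inequality for the martingale part, the bounds $|v-V^k_s|^{2+\gamma}$ and $|v-V^k_s|^{1+\gamma}$ coming from (\ref{esti1}) and the $\varphi$-cancellation encoded in (\ref{esti3}) together with {\bf (A4)}$(\gamma)$, and the same case discussion on $\gamma$ using $m_2(g_s)$ and $J_\gamma(g_s)$. The only difference is organizational: you first extract a plain second-moment bound via It\^o's formula, localization and Gronwall and then pass to the supremum without a second Gronwall, whereas the paper runs a single Doob--Gronwall argument directly on $E[\sup_{[0,t]}|V^k_s|^2]$.
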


\begin{proof}
Let $k\geq 1$ be fixed. Writing the Poisson measure $N$
as $\tN+ ds g_s(dv)dzd\varphi$, 
we obtain, using the Doob inequality, 
that for $t\in [0,T]$,  
$E[\sup_{[0,t]} |V_s^k|^2] \leq C\{E[|V_0|^2] + A_t + B_t \}$, where
\begin{eqnarray}
A_t & := & E \left[\intot ds \intrd g_s(dv) \int_0^k dz \int_0^{2\pi} d\varphi 
|c(V^k_s,v,z,\varphi)|^2 \right], \ala
B_t & := & E \left[ \sup_{[0,t]} \left|\intot ds \intrd g_s(dv) 
\int_0^k dz \int_0^{2\pi} d\varphi c(V^k_s,v,z,\varphi) \right|^2 \right]. 
\end{eqnarray}
Using now (\ref{esti1}) and then {\bf (A4$(\gamma)$)}, we get
\begin{eqnarray}
A_t  &\leq&   C E \left[\intot ds \intrd g_s(dv) 
|V^k_s - v|^{2+\gamma}\right],
\end{eqnarray}
while (\ref{esti3}) and  {\bf (A4$(\gamma)$)} yield
\begin{eqnarray}
B_t  &\leq & C E \left[ \left|\intot ds \intrd g_s(dv) 
|V^k_s - v|^{\gamma+1}\right|^2 \right].
\end{eqnarray}
We then have to divide the study into several cases.

\vip

{\it Case $\gamma \in [-1,0]$.} Then $\gamma+2 \in [0,2]$, so that
$|V^k_s - v|^{2+\gamma} \leq C(1+|V^k_s|^2+|v|^2)$, and one easily checks that
$A_t \leq C_T (1+\int_0^t m_2(g_s) ds + \int_0^t E[|V^k_s|^2]ds)$.
Furthermore $\gamma+1 \in [0,1]$, so that $|V^k_s - v|^{1+\gamma} 
\leq C(1+|V^k_s|+|v|)$. Thus $B_t \leq 
C_T (1+\int_0^t m_2(g_s) ds + \int_0^t E[|V^k_s|^2]ds)$
by the Cauchy-Schwarz inequality.
We finally find 
$E[\sup_{[0,t]} |V_s^k|^2] \leq 
C_T( 1+E[|V_0|^2]+\int_0^t m_2(g_s) ds + \int_0^t E[|V^k_s|^2]ds)$
and the conclusion follows by the Gronwall Lemma, since
$\int_0^T m_2(g_s) ds < \infty$ by assumption.

\vip

{\it Case $\gamma \in [-2,-1]$.} Since $\gamma+2 \in [0,2]$, we obtain
as previously $A_t \leq C (1+\int_0^t m_2(g_s) ds + \int_0^t E[|V^k_s|^2]ds)$.
On the other hand, $\gamma<\gamma+1\leq 0$, so that 
$|V^k_s-v|^{\gamma+1} \leq 1 + |V^k_s-v|^\gamma$. Recalling (\ref{cjalpha}),
we deduce that $\int_{\rd} g_s(dv) |V^k_s-v|^{\gamma+1}
\leq 1 + \int_{\rd}g_s(dv) |V^k_s-v|^{\gamma} 
\leq  1 + J_\gamma(g_s)$, and thus $B_t \leq C |\int_0^t (1+J_\gamma(g_s))
ds|^2$. We finally get $E[\sup_{[0,t]} |V_s^k|^2] \leq 
C_T( 1+E[|V_0|^2]+\int_0^t m_2(g_s) ds + \int_0^t E[|V^k_s|^2]ds
+ |\int_0^t (1+J_\gamma(g_s))
ds|^2)$,
and the conclusion follows by the Gronwall Lemma, since
$\int_0^T m_2(g_s) ds + \int_0^T J_\gamma(g_s)ds < \infty$ by assumption.

\vip

{\it Case $\gamma \in (-3,-2]$.} Since $\gamma<\gamma+1\leq 0$, we obtain
as previously that $B_t \leq C |\int_0^t (1+J_\gamma(g_s))
ds|^2$. A similar argument, using that $\gamma<\gamma+2\leq 0$
(and thus $x^{\gamma+2}\leq 1 + x^\gamma$), yields
$A_t \leq C \int_0^t (1+J_\gamma(g_s)) ds$. We finally find
$E[\sup_{[0,t]} |V_s^k|^2] \leq C_T( E[|V_0|^2]
+ \int_0^t (1+J_\gamma(g_s)) ds + |\int_0^t (1+ J_\gamma(g_s)) ds|^2)$,
and the conclusion follows, since
$\int_0^T J_\gamma(g_s)ds < \infty$ by assumption.
\end{proof}

Tanaka \cite{tanaka} (see also \cite[Lemma 4.7]{fm}) observed the following
elementary fact.

\begin{lem}\label{plp}
Consider a $(\cF_t)_{t\in[0,T]}$-Poisson measure $\mu(ds,dx,d\varphi)$ on 
$[0,T]\times F \times [0,2\pi)$ with intensity measure $ds \nu(dx) d\varphi$,
for some measurable space space $F$ endowed with a nonnegative measure
$\nu$. Then for any predictable map $\varphi_*:\Omega\times [0,T]\times F
\mapsto [0,2\pi)$,  the random measure $\mu_*(ds,dx,d\varphi)$ on 
$[0,T]\times F \times [0,2\pi)$ defined by
\begin{equation}
\mu_*(A)=\int_0^T \int_F \int_0^{2\pi} \indiq_A(s,x,\varphi+\varphi_*(s,x)) 
\mu(ds,dx,d\varphi) \quad \forall\; A\subset [0,T]\times F\times [0,2\pi)
\end{equation}
is again a $(\cF_t)_{t\in[0,T]}$-Poisson measure
with intensity measure $ds \nu(dx) d\varphi$. Of course, we
write $\varphi+\varphi_*(s,x)$ for its value modulo $2\pi$. 
\end{lem}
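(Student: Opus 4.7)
The plan is to identify $\mu_*$ as a Poisson random measure by computing its conditional Laplace functional, exploiting the translation invariance of Lebesgue measure on the circle $[0,2\pi)$ modulo $2\pi$. That $\mu_*$ is integer-valued and adapted to $(\cF_t)_{t\in[0,T]}$ is immediate, since it is nothing but a measurable relabeling of the atoms of $\mu$ by the predictable shift $\varphi_*$; so only the Poisson character with the claimed intensity needs to be verified.

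Concretely, I would fix $0\leq u\leq T$ and a bounded nonnegative measurable $f:(u,T]\times F\times[0,2\pi)\to\rr_+$, and set
\[
H(\omega,s,x,\varphi):=f(s,x,\varphi+\varphi_*(\omega,s,x))\indiq_{\{s>u\}},
\]
where the addition is mod $2\pi$. Since $\varphi_*$ is predictable, so is $H$ (in $(\omega,s)$) and Borel in the jump variable $(x,\varphi)$; and by construction of $\mu_*$ one has $\int H\, d\mu=\int f\, d\mu_*$. Applying the classical exponential formula for a Poisson measure integrated against a bounded predictable integrand, conditionally on $\cF_u$, yields
\[
E\!\left[\exp\!\left(-\int f\, d\mu_*\right)\Big|\cF_u\right]=\exp\!\left(\int_u^T ds\int_F \nu(dx)\int_0^{2\pi}\big(e^{-f(s,x,\varphi+\varphi_*(\omega,s,x))}-1\big)\,d\varphi\right).
\]
For each fixed $(\omega,s,x)$, the map $\varphi\mapsto\varphi-\varphi_*(\omega,s,x)\pmod{2\pi}$ is a measure-preserving bijection of $[0,2\pi)$, so the inner integral reduces to $\int_0^{2\pi}(e^{-f(s,x,\varphi)}-1)\,d\varphi$. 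The right-hand side thereby becomes deterministic and matches the Laplace functional of a Poisson random measure on $(u,T]\times F\times[0,2\pi)$ with intensity $ds\,\nu(dx)\,d\varphi$; this both identifies the law of $\mu_*$ restricted to $(u,T]$ and shows its independence of $\cF_u$. Since $u\in[0,T]$ is arbitrary, $\mu_*$ is indeed an $(\cF_t)_{t\in[0,T]}$-Poisson measure with the announced intensity.

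The only delicate point is the conditional exponential formula for an integrand depending jointly on the predictable variable $(\omega,s)$ and the jump variable $(x,\varphi)$. This is classical (cf.\ Ikeda--Watanabe or Jacod--Shiryaev) and proved by a monotone class argument starting with simple integrands of product form $h(\omega,s)\,g(x,\varphi)$. Everything else is bookkeeping: the geometric heart of the argument is simply the rotation invariance of Lebesgue measure on the circle, which is what makes the shift by a predictable phase invisible at the level of the intensity.
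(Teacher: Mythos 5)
The paper itself offers no proof of this lemma: it is quoted as an elementary observation of Tanaka, with a pointer to \cite[Lemma 4.7]{fm}. Your Laplace-functional argument is precisely the standard route behind those references, and all the needed ingredients are present: adaptedness of $\mu_*$ is indeed immediate, the shifted integrand is predictable, the rotation invariance of $d\varphi$ on the circle kills the dependence on $\varphi_*$, and identifying the conditional Laplace functional of the restriction to $(u,T]$ as a deterministic Poisson functional gives simultaneously the law and the independence of $\cF_u$, for every $u$, which is what the $(\cF_t)_{t\in[0,T]}$-Poisson property requires.

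One step should, however, be reordered to be rigorous. For a genuinely random predictable integrand $H$, the displayed identity
\begin{equation*}
E\Big[\exp\Big(-\int H\,d\mu\Big)\Big|\cF_u\Big]
=\exp\Big(\int_u^T ds\int_F\nu(dx)\int_0^{2\pi}\big(e^{-H(\omega,s,x,\varphi)}-1\big)\,d\varphi\Big)
\end{equation*}
is not valid in general: the left-hand side is $\cF_u$-measurable, while the right-hand side as written may depend on the path of $\varphi_*$ after time $u$ (already $H(\omega,s)=c\,\indiq_{\{s>1\}}\indiq_A(\omega)$ with $A\in\cF_1$ and $u=0$ makes the two sides differ). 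What the classical exponential formula gives is that
\begin{equation*}
M_t:=\exp\Big(-\int_u^t\!\!\int_F\!\int_0^{2\pi} H\,d\mu-\int_u^t ds\int_F\nu(dx)\int_0^{2\pi}\big(e^{-H}-1\big)d\varphi\Big)
\end{equation*}
is a martingale (first for $f$ bounded with support of finite intensity measure, then by monotone convergence), the compensator term remaining inside. So you should invoke the rotation invariance \emph{first}, inside the compensator: for each fixed $(\omega,s,x)$ one has $\int_0^{2\pi}(e^{-f(s,x,\varphi+\varphi_*(\omega,s,x))}-1)d\varphi=\int_0^{2\pi}(e^{-f(s,x,\varphi)}-1)d\varphi$, hence the compensator is deterministic; only then does $E[M_T|\cF_u]=M_u=1$ produce the clean conditional formula with the deterministic right-hand side, from which your conclusion follows exactly as you state it. This is a two-line reordering rather than a missing idea, so the proof is essentially correct and coincides with the argument of the cited references.
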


Our main result will be based on the following
proposition.

\begin{prop}\label{ineqk}
Assume {\bf (A1-A2-A3-A4$(\gamma)$)}, for some $\gamma\in (-3,0]$.
Let $k\geq 1$, $V_0,\tV_0$ two $\cF_0$-measurable $\rd$-valued random 
variables. We also consider $g$ and $\tg$ in 
$\lipdeux \cap L^1([0,T], \cJ_\gamma)$.
Let us finally consider, for each $s\in[0,T]$, 
$R_s \in \cH(g_s,\tg_s)$ such that $s \mapsto R_s$ is measurable.
We may find a $g$-Poisson measure $N$ and a $\tg$-Poisson measure $M$ 
such that, for $V^k$ the $(V_0,g,k,N)$-process and 
$\tV^k$ the $(\tV_0,\tg,k,M)$-process, the following property holds.

(i) If $\gamma \in (-3,0)$, set $\alpha(\gamma)= \min(1/|\gamma|,|\gamma|/2) >0$.
For all $L\geq 1$, all $t\in[0,T]$,
\begin{eqnarray}\label{ccc}
&&E[|V^k_t-\tV^k_t|^2] \leq E[|V_0-\tV_0|^2] + K_T(g,\tg,V_0,\tV_0)
L^{-\alpha(\gamma)} \ala 
&&\hskip1cm+C \intot ds E\left[ 
\intrdd R_s(dv,d\tv) 
\left\{|V^k_s-\tV^k_s|^2 + |v-\tv|^2\} \min(|V^k_s-v|^\gamma 
+ |\tV^k_s-\tv|^\gamma, L) \right\}\right],
\end{eqnarray}
where $K_T(V_0,\tV_0,g,\tg)$ depends only on 
$T,\gamma,\kappa_1,\kappa_2,\kappa_3,g,\tg$
and  $E[|V_0|^2], E[|\tV_0|^2]$.

(ii) If $\gamma=0$, for all $t\in[0,T]$, 
\begin{eqnarray}\label{ccc2}
&E[|V^k_t-\tV^k_t|^2] \leq E[|V_0-\tV_0|^2] +
C \intot ds E\left[ 
\intrdd R_s(dv,d\tv) 
\left\{|V^k_s-\tV^k_s|^2 + |v-\tv|^2\}
\right\}\right].
\end{eqnarray}
\end{prop}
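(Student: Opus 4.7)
My plan is to build on the Tanaka-style coupling: realize both $V^k$ and $\tV^k$ as driven by a single underlying Poisson measure (so their jumps occur at common times, matched by the plan $R_s$), and then apply Itô's formula to $|V^k_t-\tV^k_t|^2$, estimating the jump terms by the fundamental estimates of Lemma \ref{fundest}.

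\textbf{Step 1: Constructing the coupled driving noise.} On a suitable probability space, take a Poisson measure $\mu(ds,dv,d\tv,dz,d\varphi)$ on $[0,T]\times\rd\times\rd\times[0,\infty)\times[0,2\pi)$ with intensity $ds\,R_s(dv,d\tv)\,dz\,d\varphi$, independent of $(V_0,\tV_0)$. Since $R_s$ has marginals $g_s$ and $\tg_s$, projecting $\mu$ onto $(s,v,z,\varphi)$ gives a $g$-Poisson measure $N$. Build $V^k$ as the $(V_0,g,k,N)$-process using this $N$; then, simultaneously, define $\tV^k$ as the solution to a coupled SDE whose jumps are read off from $\mu$ via the shifted angle $\varphi\mapsto\varphi+\varphi_0(V^k_{s-}-v,\tV^k_{s-}-\tv)$ from Lemma \ref{tanana}. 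Finally, apply Lemma \ref{plp} to the projection of $\mu$ onto $(s,\tv,z,\varphi)$ with the predictable shift $\varphi_*(s,v,\tv)=\varphi_0(V^k_{s-}-v,\tV^k_{s-}-\tv)$: the resulting random measure $M$ is a $\tg$-Poisson measure, and by construction $\tV^k$ is the $(\tV_0,\tg,k,M)$-process.

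\textbf{Step 2: Itô expansion.} Since $V^k$ and $\tV^k$ are pure-jump processes with finitely many jumps on $[0,T]$ a.s., the difference of squares telescopes. Writing the jumps as $c=c(V^k_{s-},v,z,\varphi)$ and $\tc=c(\tV^k_{s-},\tv,z,\varphi+\varphi_0(V^k_{s-}-v,\tV^k_{s-}-\tv))$, I take expectations (the compensated-martingale part vanishes thanks to the moment bounds of Lemma \ref{m2k} combined with (\ref{esti1})-(\ref{esti3})) and obtain
\begin{equation*}
E[|V^k_t-\tV^k_t|^2]=E[|V_0-\tV_0|^2]+E\int_0^t\!ds\!\int\! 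R_s(dv,d\tv)\int_0^k\!dz\int_0^{2\pi}\!d\varphi\,\bigl[|c-\tc|^2+2(V^k_s-\tV^k_s)\cdot(c-\tc)\bigr].
\end{equation*}

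\textbf{Step 3: Fundamental estimates.} For the quadratic piece I invoke (\ref{esti2}) directly (it is designed precisely for the $\varphi_0$-shifted difference), which bounds the $z,\varphi$-integral by $C(|V^k_s-\tV^k_s|^2+|v-\tv|^2)(\Psi(|V^k_s-v|)+\Psi(|\tV^k_s-\tv|))$. For the linear piece, first pull $\int_0^{2\pi}d\varphi$ inside; the shift $\varphi+\varphi_0$ disappears under the $2\pi$-periodic change of variables, so I can apply (\ref{esti4}) (without the shift), then multiply by $|V^k_s-\tV^k_s|$ and use $2ab\le a^2+b^2$ to absorb into the same quadratic form. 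Invoking \textbf{A4}$(\gamma)$ to replace $\Psi(x)$ by $\kappa_3 x^\gamma$ leaves
\begin{equation*}
E[|V^k_t-\tV^k_t|^2]\le E[|V_0-\tV_0|^2]+C\int_0^t\! ds\,E\!\int\! R_s(dv,d\tv)\bigl(|V^k_s-\tV^k_s|^2+|v-\tv|^2\bigr)\bigl(|V^k_s-v|^\gamma+|\tV^k_s-\tv|^\gamma\bigr).
\end{equation*}
When $\gamma=0$ this is exactly (\ref{ccc2}), so (ii) is proved.

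\textbf{Step 4: Truncation (the main obstacle).} For $\gamma<0$ I split the singular factor as
\begin{equation*}
|V^k_s-v|^\gamma+|\tV^k_s-\tv|^\gamma=\min(|V^k_s-v|^\gamma+|\tV^k_s-\tv|^\gamma,L)+\bigl(|V^k_s-v|^\gamma+|\tV^k_s-\tv|^\gamma-L\bigr)^+,
\end{equation*}
producing the $\min$-term of (\ref{ccc}) plus an error. To control the error I use the pointwise bound $X\mathbf{1}_{X>L}\le X^{1+\delta}/L^\delta$ on the first factor and absorb moments of $|v-\tv|^2$ and of the processes through Lemma \ref{m2k}. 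The two regimes in $\alpha(\gamma)=\min(1/|\gamma|,|\gamma|/2)$ will arise by choosing two different values of $\delta$: one interpolation uses $E[|V^k_s|^2]+E[|\tV^k_s|^2]$ plus a power of $|V^k_s-v|^{\gamma(1+\delta)}$ controlled by $J_\gamma(g_s)$, the other combines $E[|V^k_s-\tV^k_s|^2]$ with a Hölder-type inequality against the $\cJ_\gamma$-norm. Optimizing over $\delta$ in each range of $|\gamma|$ yields the exponent $\min(1/|\gamma|,|\gamma|/2)$, and the constant $K_T(g,\tg,V_0,\tV_0)$ collects the moment bounds. This is where all the analytic work lies; the Itô/coupling setup of Steps 1--3 is essentially bookkeeping.
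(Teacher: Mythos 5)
Your Steps 1--3 reproduce the paper's construction exactly: a single Poisson measure with intensity $ds\,R_s(dv,d\tv)\,dz\,d\varphi$, the Tanaka shift $\varphi_0$ from Lemma \ref{tanana}, Lemma \ref{plp} to identify $M$ as a $\tg$-Poisson measure, and then the identity for $E[|V^k_t-\tV^k_t|^2]$ estimated through (\ref{esti2}), (\ref{esti4}) and {\bf (A4)}$(\gamma)$; this also settles the case $\gamma=0$ as in the paper.

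Step 4, however, has a genuine gap, and it is precisely the point where the real difficulty of the proposition sits. After you have already multiplied the full singular factor $X:=|V^k_s-v|^\gamma+|\tV^k_s-\tv|^\gamma$ by $|V^k_s-\tV^k_s|^2+|v-\tv|^2$, the error term $E\big[\int R_s(dv,d\tv)\,(|V^k_s-\tV^k_s|^2+|v-\tv|^2)\,X\,\indiq_{\{X>L\}}\big]$ cannot be made small in $L$ with the available hypotheses: the interpolation $X\indiq_{\{X>L\}}\le X^{1+\delta}L^{-\delta}$ requires moments of order $J_{(1+\delta)\gamma}$, which are not assumed (only $g,\tg\in L^1([0,T],\cJ_\gamma)$), and a H\"older inequality in $\omega$ against $|V^k_s-\tV^k_s|^2$ or $|v-\tv|^2$ requires moments of the processes beyond $L^2$, which Lemma \ref{m2k} does not provide. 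No negative power of $L$ can be extracted from a quantity controlled only in $L^1$. The paper's way out is different: on the bad event $\{X>L\}$ it does \emph{not} use the coupled estimates at all, but reverts to the uncoupled bounds (\ref{esti1}), (\ref{esti3}), i.e.\ its inequality (\ref{toc2}), in which the singular distances enter only through the milder exponents $2+\gamma$ and $1+\gamma$ and the difference $|V^k_s-\tV^k_s|$ enters only linearly. Then, since $|V^k_s-v|\le (L/2)^{1/\gamma}$ on $\{|V^k_s-v|^\gamma>L/2\}$, writing $|V^k_s-v|^{1+\gamma}\le (L/2)^{1/\gamma}|V^k_s-v|^{\gamma}$ (and similarly $|V^k_s-v|^{2+\gamma}\le (L/2)^{2/\gamma}|V^k_s-v|^{\gamma}$) produces the explicit factors $L^{1/\gamma}$, $L^{2/\gamma}$, while the remaining cross terms such as $\int R_s\,|\tV^k_s-\tv|^{2+\gamma}\indiq_{\{|V^k_s-v|^\gamma>L/2\}}$ are handled by H\"older between a factor controlled by $m_2$ or $J_\gamma$ and the indicator, whose $R_s$-mass is at most $2J_\gamma(g_s)/L$; the case distinctions $\gamma\in[-1,0)$, $[-2,-1]$, $(-3,-2)$ in these H\"older exponents are what generate $\alpha(\gamma)=\min(1/|\gamma|,|\gamma|/2)$. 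Without switching to this second family of estimates on the bad set, your truncation cannot be closed.
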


\begin{proof}
Let thus $k\geq 1$, $g,\tg$, $V_0,\tV_0$, and $(R_s)_{s\in [0,T]}$ 
be as in the statement.
We introduce a Poisson measure $\Delta$ on $[0,T]\times(\rd\times\rd)\times
[0,\infty)\times[0,2\pi)$ with intensity measure
$ds R_s(dv,d\tv) dz d\varphi$.
Then, since the restriction of this measure to $z\in[0,k]$ is a.s. finite,
there exists a unique pair of processes $(V^k_t)_{t\in[0,T]}$
and  $(\tV^k_t)_{t\in[0,T]}$, solution of
\begin{eqnarray} 
&&V^k_t= V_0 + \intot \int_{\rd\times\rd} \int_0^k\int_0^{2\pi} 
c(V^k_\sm,v,z,\varphi) \Delta( ds,d(v,\tv),dz,d\varphi), \ala
&&\tV^k_t= \tV_0 +  \intot \int_{\rd\times\rd} \int_0^k\int_0^{2\pi} 
c(\tV^k_\sm,\tv,z,\varphi+\varphi_0(V^k_\sm-v,\tV^k_\sm-\tv)) 
\Delta( ds,d(v,\tv),dz,d\varphi),
\end{eqnarray}
where $\varphi_0$ was introduced in Lemma \ref{tanana}.
Consider now the random measures $N$ and $M$ defined on 
$[0,T]\times\rd\times [0,\infty)\times[0,2\pi)$ by
\begin{eqnarray}
&&N(A)= \int_0^T \int_{\rd\times\rd} \int_0^\infty\int_0^{2\pi} 
\indiq_A (s,v,z,\varphi) \Delta( ds,d(v,\tv),dz,d\varphi), \ala
&&M(A)= \int_0^T \int_{\rd\times\rd} \int_0^\infty\int_0^{2\pi} 
\indiq_A (s,\tv,z,\varphi+\varphi_0(V^k_\sm-v,\tV^k_\sm-\tv)) 
\Delta( ds,d(v,\tv),dz,d\varphi).
\end{eqnarray}
Then $N$ is classically a $g$-Poisson measure, since for each $s$,
$g_s$ is the first marginal of $R_s$. Furthemore, $M$ is a 
$\tg$-Poisson-measure, since for each $s$,
$\tg_s$ is the second marginal of $R_s$, and due Lemma (\ref{plp}).

\vip

Thus $(V^k_t)_{t\in[0,T]}$ (resp. $(\tV^k)_{t\in[0,T]}$) is 
the $(V_0,g,k,N)$-process (resp. the 
$(\tV_0,\tg,k,M)$-process). Next, setting for simplicity
$c:= c(V^k_\sm,v,z,\varphi)$ and 
$\tc:=c(\tV^k_\sm,\tv,z,\varphi+\varphi_0(V^k_\sm-v,\tV^k_\sm-\tv))$,
we get
\begin{eqnarray}
|V^k_t-\tV^k_t|^2=|V_0-\tV_0|^2+ \intot \int_{\rd\times\rd}\int_0^k \intzdp
\left\{ |V^k_\sm+c - \tV^k_\sm - \tc |^2 -|V^k_\sm - \tV^k_\sm |^2
\right\} \ala 
\Delta(ds,d(v,\tv),dz,d\varphi).
\end{eqnarray}
Hence, taking expectations,
\begin{eqnarray}\label{toc0}
&&E[|V^k_t-\tV^k_t|^2]=E[|V_0-\tV_0|^2]\ala
&&+\int_0^t ds E \left\{ \int_{\rd\times\rd}R_s(dv,d\tv) \int_0^k dz
\int_0^{2\pi}d\varphi
[|c-\tc|^2 + 2 (V^k_s-\tV^k_s).(c-\tc)] \right\} .
\end{eqnarray}
Now, using (\ref{esti2}), (\ref{esti4}) and {\bf (A4)$(\gamma)$}, 
we easily deduce that a.s.,
\begin{eqnarray}\label{toc1}
&&\int_0^k dz
\int_0^{2\pi}d\varphi
[|c-\tc|^2 + 2 (V^k_s-\tV^k_s).(c-\tc)] \ala
&& \hskip2cm \leq C 
(|V^k_s-\tV^k_s|^2 + |v-\tv|^2)(|V^k_s-v|^\gamma+ |\tV^k_s-\tv|^\gamma),
\end{eqnarray}
while using (\ref{esti1}), (\ref{esti3}) and {\bf (A4)$(\gamma)$},  
we obtain a.s.
\begin{eqnarray}\label{toc2}
&&\int_0^k dz
\int_0^{2\pi}d\varphi
[|c-\tc|^2 + 2 (V^k_s-\tV^k_s).(c-\tc)]\ala
&&\leq C
|V^k_s-v|^{2+\gamma} + C |\tV^k_s-\tv|^{2+\gamma} 
+ C |V^k_s-\tV^k_s| \left\{|V^k_s-v|^{1+\gamma}+ |\tV^k_s-\tv|^{1+\gamma} 
\right\}.
\end{eqnarray}

If $\gamma=0$, (\ref{ccc2}) 
follows immediately from (\ref{toc0}) and (\ref{toc1}).
We thus now assume that $\gamma\in (-3,0)$.
Let $L\geq 1$ be fixed. We insert (\ref{toc1}) (resp. (\ref{toc2})) in (\ref{toc0})
when $|V^k_s-v|^\gamma+|\tV^k_s-\tv|^\gamma \leq L$ 
(resp. $|V^k_s-v|^\gamma+|\tV^k_s-\tv|^\gamma > L$),
and we obtain
\begin{eqnarray}\label{depart}
&&E[|V^k_t-\tV^k_t|^2] \leq E[|V_0-\tV_0|^2] + 
C\sum_{i=1}^4(I_t^{i,L}+\tI^{i,L}_t) 
\ala
&&\hskip1cm+C \intot ds E\left[ 
\intrdd R_s(dv,d\tv) 
\left\{|V^k_s-\tV^k_s|^2 + |v-\tv|^2\}\min(|V^k_s-v|^\gamma 
+ |\tV^k_s-\tv|^\gamma,L)\right\} \right],
\end{eqnarray}
where
\begin{eqnarray}
I^{1,L}_t &:=&  \intot ds E \left[ \intrdd R_s(dv,d\tv) |V^k_s-v|^{2+\gamma}
\indiq_{\{|V^k_s-v|^\gamma>L/2\}} \right],\ala
I^{2,L}_t &:=& \intot ds E \left[ \intrdd R_s(dv,d\tv) |\tV^k_s-\tv|^{2+\gamma}
\indiq_{\{|V^k_s-v|^\gamma>L/2\}} \right], \ala
I^{3,L}_t   &:=&  \intot ds E \left[ \intrdd R_s(dv,d\tv) |V^k_s-\tV^k_s| 
|V^k_s-v|^{1+\gamma} \indiq_{\{|V^k_s-v|^\gamma>L/2\}}\right],\ala
I^{4,L}_t   &:=&  \intot ds E \left[ \intrdd R_s(dv,d\tv) |V^k_s-\tV^k_s| 
|\tV^k_s-\tv|^{1+\gamma} \indiq_{\{|V^k_s-v|^\gamma>L/2\}}\right],
\end{eqnarray}
and where $\tI^{1,L}_t,...,\tI^{4,L}_t$ have the same expressions replacing 
$V^k_s,\tV^k_s,v,\tv$ by $\tV^k_s,V^k_s,\tv,v$. 

We first treat the case of $I^{1,L}$. Since $R_s \in \cH(g_s,\tg_s)$ and $\gamma\in(-3,0)$, 
and using notation (\ref{cjalpha}) one has
\begin{eqnarray}
I^{1,L}_t &=&  \intot ds E \left[ \intrd g_s(dv) |V^k_s-v|^{2+\gamma}
\indiq_{\{|V^k_s-v|^\gamma>L/2\}} \right]\ala
&\leq & C L^{2/\gamma} \intot ds E \left[ \intrd g_s(dv) 
|V^k_s-v|^{\gamma}\right]
\leq  C L^{2/\gamma} \intot ds J_\gamma(g_s) \leq K_T(g) L^{-\alpha(\gamma)},
\end{eqnarray}
since $2/|\gamma|\geq \alpha(\gamma)$. Similarly, since $1/|\gamma|\geq \alpha(\gamma)$
\begin{eqnarray}
I^{3,L}_t &=&  \intot ds E \left[|V^k_s-\tV^k_s| \intrd g_s(dv) 
|V^k_s-v|^{1+\gamma}
\indiq_{\{|V^k_s-v|^\gamma>L/2\}} \right]\ala
&\leq & C L^{1/\gamma} E\left[\sup_{[0,T]} (|V^k_s|+|\tV^k_s|) \right] 
\intot ds J_\gamma(g_s) \leq K_T(g,\tg,V_0,\tV_0) L^{-\alpha(\gamma)},
\end{eqnarray}
where we used Lemma \ref{m2k}.
We now study $I^{2,L}_t$ when $\gamma\in [-2,0)$, so that $\gamma+2\in[0,2)$.
Using the H\"older inequality, we get
\begin{eqnarray}
I^{2,L}_t &\leq&  E\left[
\left(\intot ds \intrdd R_s(dv,d\tv) |\tV^k_s-\tv|^2
\right)^{\frac{2+\gamma}{2}} 
\left(\intot ds \intrdd R_s(dv,d\tv) 
\indiq_{\{|V^k_s-v|^\gamma>L/2\}}\right)^{\frac{|\gamma|}{2}}\right] \ala
&=&E\left[
\left(\intot ds \intrd \tg_s(d\tv) |\tV^k_s-\tv|^2\right)^{\frac{2+\gamma}{2}} 
\left(\intot ds \intrd g_s(dv) 
\indiq_{\{|V^k_s-v|^\gamma>L/2\}}\right)^{\frac{|\gamma|}{2}}\right] \ala
&\leq & C E\left[
\left(\intot ds (m_2(\tg_s) + |\tV^k_s|^2)\right)^{\frac{2+\gamma}{2}} 
\left(\intot ds \intrd g_s(dv) \frac{|V^k_s-v|^\gamma}{L/2} 
\right)^{\frac{|\gamma|}{2}}
\right]\ala
&\leq & C L^{\gamma/2}\{1 + \sup_{[0,T]} m_2(\tg_s)+ E[ \sup_{[0,T]} |\tV^k_s|^2]\}
\left(\intot ds J_\gamma(g_s)ds \right)^{\frac{|\gamma|}{2}}
\leq K_T(g,\tg,\tV_0) L^{-\alpha(\gamma)},
\end{eqnarray}
by Lemma \ref{m2k}, since $\frac{\gamma+2}{\gamma}\in [0,1]$ and 
$|\gamma|/2 \geq \alpha(\gamma)$. We next study $I^{2,L}_t$ when 
$\gamma \in (-3,-2)$, so that $\gamma+2\in (-1,0)$. The H\"older inequality
yields
\begin{eqnarray}
I^{2,L}_t &\leq&  E\left[
\left(\intot ds \intrdd R_s(dv,d\tv)
|\tV^k_s-\tv|^\gamma\right)^{\frac{2+\gamma}{\gamma}} 
\left(\intot ds \intrdd R_s(dv,d\tv) 
\indiq_{\{|V^k_s-v|^\gamma>L/2\}}\right)^{\frac{2}{|\gamma|}}\right] \ala
&=&E\left[
\left(\intot ds \intrd \tg_s(d\tv) |\tV^k_s-\tv|^\gamma
\right)^{\frac{2+\gamma}{\gamma}} 
\left(\intot ds \intrd g_s(dv) 
\indiq_{\{|V^k_s-v|^\gamma>L/2\}}\right)^{\frac{2}{|\gamma|}}\right] \ala
&\leq & C \left(\intot ds J_\gamma(\tg_s) \right)^{\frac{2+\gamma}{\gamma}} 
\left(\intot ds J_\gamma(g_s) \right)^{\frac{2}{|\gamma|}} L^{2/\gamma}
\leq K_T(g,\tg) L^{-\alpha(\gamma)},
\end{eqnarray}
since $2/|\gamma|\geq \alpha(\gamma)$. 
Let us now upperbound
$I^{4,L}_t$ in the case $\gamma\in [-1,0)$, so that $1+\gamma \in [0,1)$.
Using the H\"older inequality, one finds as usual (since 
$(1+\gamma)/2\leq 1/2$),
\begin{eqnarray}
I^{4,L}_t &\leq&  E\Big[ \sup_{[0,T]}|V^k_s-\tV^k_s| \times
\left(\intot ds \intrd \tg_s(d\tv) |\tV^k_s-\tv|^2 
\right)^{\frac{1+\gamma}{2}}\ala
&&\hskip4cm \times \left(\intot ds \intrd g_s(dv)
\indiq_{\{|V^k_s-v|^\gamma>L/2\}}\right)^{\frac{1-\gamma}{2}}\Big] \ala
& \leq & C E \left[ \left\{ \sup_{[0,T]}(|V^k_s|+|\tV^k_s|) \right\}
\left\{1+ \intot ds (m_2(g_s)+|V^k_s|^2)\right\}^{\frac{1}{2}}
\left\{\frac{1}{L}\intot ds J_\gamma(g_s) \right\}^{\frac{1-\gamma}{2}}
\right]\ala
& \leq & C L^{\frac{\gamma-1}{2}} \{1+ \sup_{[0,T]}m_2(\tg_s) + 
E[\sup_{[0,T]}(|V^k_s|^2+|\tV^k_s|^2)]\}
\left( \intot ds J_\gamma(g_s) \right)^{\frac{1-\gamma}{2}}\ala
&\leq& K_T(g,\tg,V_0, \tV_0) L^{-\alpha(\gamma)},
\end{eqnarray}
since $(1+|\gamma|)/2 \geq \alpha(\gamma)$ and by Lemma \ref{m2k}. 
Finally we consider
$I^{4,L}_t$ in the case $\gamma\in (-3,-2)$, so that $1+\gamma \in 
(\gamma,0)$: 
\begin{eqnarray}
I^{4,L}_t &\leq& 
E\Big[ \sup_{[0,T]}|V^k_s-\tV^k_s| \times
\left(\intot ds \intrd \tg_s(d\tv) |\tV^k_s-\tv|^\gamma
\right)^{\frac{1+\gamma}{\gamma}}\ala
&&\hskip4cm \times \left(\intot ds \intrd g_s(dv)
\indiq_{\{|V^k_s-v|^\gamma>L/2\}}\right)^{\frac{1}{|\gamma|}}\Big] \ala
& \leq & C L^{1/\gamma} E[\sup_{[0,T]}(|V^k_s|+|\tV^k_s|)]
\left( \intot ds J_\gamma(\tg_s) \right)^{\frac{1+\gamma}{\gamma}}
\left( \intot ds J_\gamma(g_s) \right)^{\frac{1}{|\gamma|}}\ala
&\leq& K_T(g,\tg,V_0,\tV_0) L^{-\alpha(\gamma)}.
\end{eqnarray}
since $1/|\gamma| \geq \alpha(\gamma)$. Using the same computations for $\tI^{1,L}_t,...\tI^{4,L}_t$, 
(\ref{ccc}) follows immediately.
\end{proof}

Admitting for a moment Lemma \ref{cacv}, we give the

\begin{proof}[Proof of Theorem \ref{maintheo}]
We thus assume {\bf (A1-A2-A3-A4$(\gamma)$)} for some
$\gamma \in (-3,0)$, the case $\gamma=0$ is easier and left to the reader.
We consider two weak solutions $(f_t)_{t\in [0,T]}$ and 
$(\tf_t)_{t\in [0,T]}$ to (\ref{be}) lying in 
$\lipdeux \cap L^1([0,T]),\cJ_\gamma)$.

We consider two $\cF_0$-measurable random variables 
$V_0\sim f_0$ and $\tV_0 \sim \tf_0$, such that 
$W_2^2(f_0,\tf_0)= E[|V_0-\tV_0|^2]$, and for each $s\in [0,T]$,
we consider $R_s\in \cH(f_s,\tf_s)$ such that 
$W_2^2(f_s,\tf_s)= \int_{\rd\times\rd} |v-\tv|^2 R_s(dv,d\tv)$.
Due to Fontbona-Gu\'erin-M\'el\'eard \cite[Theorem 1.3]{fgm}, $s\mapsto R_s$ 
is measurable: indeed, since by assumption $f_t$ has a density for all
$t\in[0,T]$, the minimizer $R_t$ is unique (see e.g. Villani 
\cite[Theorem 2.12]{villani2}).

Finally, for each $k\geq 1$, we consider some $(V_0,f,k,N)$-process
$(V^k_t)_{t\in [0,T]}$
and some $(\tV_0,\tf,k,M)$ process $(\tV^k_t)_{t\in [0,T]}$, 
coupled as in Proposition \ref{ineqk}.

We set $w^k_t:=E[|V^k_t-\tV^k_t|^2]$ 
for each $k\geq 1$, each $t\in [0,T]$.
Using Lemma \ref{cacv}, we deduce that for all $t\in [0,T]$,
\begin{equation}
u_t:=W_2^2(f_t,\tf_t) \leq \limsup_k w^k_t =:w_t.
\end{equation}
We observe at once that due to Lemma \ref{m2k} and by assumption on $f,\tf$,
\begin{equation}\label{toutborne}
\sup_k \sup_{[0,T]} w^k_t + \sup_{[0,T]} w_t <\infty.
\end{equation}
Due to Proposition \ref{ineqk}, we know that for all $L\geq 1$,
all $k\geq 1$, all $t\in [0,T]$,
\begin{eqnarray}
w^k_t & \leq & u_0 + K(T,f,\tf,V_0,\tV_0) L^{-\alpha(\gamma)} \ala
&& + C\intot ds E \left[ |V^k_s-\tV^k_s|^2 \intrdd R_s(dv,d\tv)
(|V^k_s-v|^\gamma+|\tV^k_s-\tv|^\gamma ) \right] \ala
&& + C\intot ds E \left[ \intrdd R_s(dv,d\tv) |v-\tv|^2
\min(|V^k_s-v|^\gamma,L)  \right] \ala
&&+ C\intot ds E \left[ \intrdd R_s(dv,d\tv) |v-\tv|^2
\min(|\tV^k_s-\tv|^\gamma,L)  \right]\ala
&=:& u_0 + K(T,f,\tf,V_0,\tV_0) L^{-\alpha(\gamma)} + CA_k(t,L) + C B_k(t,L)
+ C\tB_k(t,L).
\end{eqnarray}
First, recalling (\ref{cjalpha}) and that $R_s \in \cH(f_s,\tf_s)$,
we observe that
\begin{eqnarray}
A_k(t,L) &=&  \int_0^t ds    E \left[ |V^k_s-\tV^k_s|^2 \intrd (f_s(dv) 
|V^k_s-v|^\gamma+ \tf_s(d\tv) |\tV^k_s-\tv|^\gamma ) \right]  \ala
&\leq &  \int_0^t ds \; w^k_s J_\gamma(f_s+\tf_s).
\end{eqnarray}
Hence for all $L\geq 1$,
using (\ref{toutborne}), the Lebesgue Theorem and that $J_\gamma(f_s+\tf_s)$
belongs to $L^1([0,T])$ by assumption, 
\begin{equation}\label{dejabien}
\limsup_k A_k(t,L) \leq \intot ds \; w_s J_\gamma(f_s+\tf_s).
\end{equation}
Next, one easily checks that 
for each $s\in [0,T]$, the map $v_* \mapsto \alpha_s(v_*):=
\intrdd R_s(dv,d\tv)|v-\tv|^2 \min(|v_*-v|^\gamma,L)$ is continuous
on $\rd$ and bounded by $2 L (m_2(f_s)+m_2(\tf_s))$.
Since furthermore $\int_0^T (m_2(f_s)+m_2(\tf_s)) ds <\infty$ by assumption,
we easily deduce from Lemma \ref{cacv} and the Lebesgue Theorem that
\begin{eqnarray}
\lim_k B_k(t,L) & = & \intot ds 
\intrdd R_s(dv,d\tv)|v-\tv|^2 \intrd f_s(dv_*) \min(|v_*-v|^\gamma,L)  \ala
&\leq & \intot ds 
\intrdd R_s(dv,d\tv)|v-\tv|^2 \intrd f_s(dv_*) |v_*-v|^\gamma  \ala
&\leq &  \intot ds \; u_s J_\gamma(f_s) \leq  \intot ds \; w_s J_\gamma(f_s).
\end{eqnarray}
Using the same computation for $\tB_k(t,L)$, we finally obtain,
for all $L\geq 1$, 
\begin{equation}
w_t \leq u_0 + K(T,f,\tf,V_0,\tV_0) L^{-\alpha(\gamma)} + C\intot
w_s J_\gamma(f_s+\tf_s) ds.
\end{equation}
Making $L$ tend to infinity, and using the generalized Gronwall Lemma 
\ref{ggl},
we deduce that for $t\in [0,T]$, 
$w_t \leq u_0 \exp(C \int_0^t J_\gamma(f_s+\tf_s)ds )$.
Since $W^2_2(f_t,\tf_t)=u_t\leq w_t$, this concludes the proof.
\end{proof}

It remains to prove the convergence Lemma \ref{cacv}. To this aim, we first
prove a uniqueness result for a linearized Boltzmann equation.

\begin{lem}\label{unili}
Assume {\bf (A1-A2-A3-A4$(\gamma)$)}, for some $\gamma \in (-3,0]$.
Consider a weak solution $f=(f_t)_{t\in[0,T]} \in \lipdeux\cap 
L^1([0,T],\cJ_\gamma)$ 
to (\ref{be}). Assume that for some 
$g=(g_t)_{t\in[0,T]} \in \lipdeux$ for all $\phi\in C^2_c$, all $t\in [0,T]$,
\begin{eqnarray}\label{lbe}
\intrd \phi(v)g_t(dv)&=&\intrd \phi(v)f_0(dv) + \intot ds \intrd 
g_s(dv) \intrd f_s(dv_*)
\ctA\phi(v,v_*),
\end{eqnarray}
with $\ctA\phi$ defined by (\ref{agood}).
Then $g=f$.
\end{lem}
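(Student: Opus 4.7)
The plan is to set up a duality argument that exploits the linearity of (\ref{lbe}) in $g$, using the cutoff processes of Notation \ref{notagpro} to construct the dual test functions. First I note that $f$ itself satisfies (\ref{lbe}): by Lemma \ref{rewriteA} one has $\cA\phi(v,v_*)=\tfrac12[\ctA\phi(v,v_*)+\ctA\phi(v_*,v)]$, and since $f_s(dv)\,f_s(dv_*)$ is invariant under the exchange $(v,v_*)\leftrightarrow(v_*,v)$, the double integrals of $\cA\phi$ and of $\ctA\phi$ against $f_s\otimes f_s$ coincide. Restricting (\ref{wbe}) to $\phi\in C^2_c\subset C^2_\infty$ then yields (\ref{lbe}) with $g=f$, so the signed-measure difference $h_t:=g_t-f_t$ starts at $h_0=0$ and satisfies
\begin{equation*}
\langle \phi,h_t\rangle=\int_0^t\!\!\int h_s(dv)\int f_s(dv_*)\,\ctA\phi(v,v_*)\,ds,\qquad \phi\in C^2_c.
\end{equation*}

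The core of the argument is a dual backward equation. Fix $T_*\in[0,T]$ and $\psi\in C^2_c$, and for each $k\ge 1$ set $\phi^k(s,v):=E[\psi(V^{k,s,v}_{T_*})]$ for $s\in[0,T_*]$ and $v\in\rd$, where $V^{k,s,v}$ denotes a $(v,f,k,N)$-process of Notation \ref{notagpro} restarted at time $s$ from the deterministic point $v$. The truncation at $z=k$ makes this a finite-rate jump process, so standard arguments for jump Markov semigroups yield $\phi^k\in C^{1,2}_b([0,T_*]\times\rd)$ with $C^2$-norms controlled uniformly in $k$, and $\phi^k$ solves the backward equation $\partial_s\phi^k_s+\int f_s(dv_*)\,\ctA_k\phi^k_s(\cdot,v_*)=0$ with $\phi^k_{T_*}=\psi$, where $\ctA_k$ is obtained from (\ref{agood}) by replacing $\int_0^\infty dz$ with $\int_0^k dz$. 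After extending (\ref{lbe}) from $\phi\in C^2_c$ to $\phi\in C^2_b$ (valid by the bound $|\ctA\phi|\le C\|\phi\|_{C^2_b}\Phi(|v-v_*|)(|v-v_*|+|v-v_*|^2)$ from Lemma \ref{fundest}(ii) together with $f,g\in\lipdeux\cap L^1([0,T],\cJ_\gamma)$), one may plug in $\phi^k_s$ and, using the backward equation, arrive at
\begin{equation*}
\langle\psi,h_{T_*}\rangle=\int_0^{T_*}\!\int h_s(dv)\int f_s(dv_*)\,(\ctA-\ctA_k)\phi^k_s(v,v_*)\,ds.
\end{equation*}
A Taylor expansion combined with (\ref{mesti1}) bounds the integrand by $C\|\phi^k\|_{C^2_b}|v-v_*|^2\Phi(|v-v_*|)\e_0^k(|v-v_*|)$, and since $\e_0^k\to 0$ pointwise with $\e_0^k\le\kappa_1$ uniformly, dominated convergence gives $\langle\psi,h_{T_*}\rangle=0$ as $k\to\infty$. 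The arbitrariness of $\psi\in C^2_c$ and $T_*\in[0,T]$ then yields $g=f$.

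The main obstacle will be the uniform-in-$k$ $C^{1,2}_b$ control of $\phi^k$ and the justification of its use as a test function in the extended (\ref{lbe}); these are classical but technical points for jump-type backward Kolmogorov equations that rely on the a.s.\ finiteness of the jump rate of the truncated process and on the pointwise estimates of Lemmas \ref{fundest} and \ref{moinsfundest}.
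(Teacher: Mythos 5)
Your overall duality scheme is linear-algebraically sound, but it hinges entirely on the claim that $\phi^k(s,v):=E[\psi(V^{k,s,v}_{T_*})]$ lies in $C^{1,2}_b$ with $C^2$-norms bounded uniformly in $k$, and this is exactly the point that cannot be dispatched as ``classical but technical'' under the paper's hypotheses. The jump amplitude $c(v,v_*,z,\varphi)=a[v,v_*,G(z/\Phi(|v-v_*|)),\varphi]$ is not even continuous in $v$: the frame $(I(X),J(X))$ entering $\Gamma(X,\varphi)$ is only a measurable choice (no continuous orthonormal frame exists on $\rd\setminus\{0\}$), and for soft potentials $\Phi(|v-v_*|)=|v-v_*|^\gamma$ is singular on the diagonal, so $v\mapsto G(z/\Phi(|v-v_*|))$ is not Lipschitz either. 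The only stability estimates available in the paper -- {\bf (A3)}, Lemma \ref{tanana} and (\ref{esti2}) -- are coupling estimates valid \emph{after} the angle shift $\varphi_0(v-v_*,\tv-\tv_*)$, which depends on both points; they control the $W_2$ distance between laws of two coupled processes, not the differentiability (or even continuity) of the stochastic flow $v\mapsto V^{k,s,v}_t$ for a fixed driving Poisson measure. Consequently there is no route to a uniform-in-$k$ (or even fixed-$k$) $C^2_b$ bound on $\phi^k$ by ``standard arguments for jump Markov semigroups.'' This matters doubly: without two bounded derivatives, $\ctA\phi^k_s(v,v_*)$ is not even well defined (the compensated small-$\theta$ integral in (\ref{agood}) needs the second-derivative Taylor bound), so $\phi^k_s$ cannot be used as a test function in the extended (\ref{lbe}), and your final error bound $C\|\phi^k\|_{C^2_b}|v-v_*|^2\Phi(|v-v_*|)\e_0^k(|v-v_*|)$ has no uniform constant to work with. (There is also a smaller mismatch: the generator of the $(v,f,k,N)$-process is $\int_0^k dz\int_0^{2\pi}d\varphi\,[\phi(\cdot+c)-\phi(\cdot)]$, whose drift part is $-h_0^k(|v-v_*|)(v-v_*)\cdot\nabla\phi$, not the $\kappa_0\Phi$ drift you keep in $\ctA_k$; this is fixable via (\ref{mesti2})--(\ref{mesti3}), but only once the regularity issue is resolved.)

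The paper avoids this obstacle by staying entirely probabilistic: it identifies any solution $g$ of (\ref{lbe}) with the flow of time marginals of a martingale problem $MP((\ctA_t)_t,\mu)$, invokes Bhatt--Karandikar to reduce uniqueness of (\ref{lbe}) to existence and uniqueness in law of that martingale problem started from Dirac masses, and proves the latter through the SDE representation (\ref{sde}), a Picard iteration for existence, and the Tanaka angle-shift coupling with the truncated $(v_0,f,k,N_0)$-process for uniqueness -- precisely because the coupling estimates (\ref{esti2}), (\ref{esti4}) are usable there without any smoothness of the semigroup. If you want to salvage your argument you would have to either prove a genuinely new regularization estimate for the backward equation associated with $\ctA_t$ (the authors state they could not find an analytic proof), or replace the dual test function by the probabilistic coupling as in the paper.
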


\begin{proof}
We thus assume {\bf (A1-A2-A3-A4$(\gamma)$)} for some $\gamma\in (-3,0]$,
and (unfortunately) use some martingale
problems techniques. We consider a weak solution $f=(f_t)_{t\in[0,T]} 
\in \lipdeux\cap L^1([0,T],\cJ_\gamma)$  to (\ref{be}).
We also consider, for each $t\geq 0$ the operator $\ctA_t$ 
defined for $\phi \in C^2_\infty$ and $v\in\rd$ by
\begin{equation}\label{ctat}
\ctA_t\phi(v)=\intrd f_t(dv_*) \ctA\phi(v,v_*).
\end{equation}
We will prove that for any $\mu \in \pdeux$, there exists at most one
$g\in \lipdeux$ such that
for all $t\geq 0$, all $\phi\in C^2_c$,
\begin{equation}\label{eqlir}
\intrd \phi(v)g_t(dv) = \intrd \phi(v)\mu(dv) 
+ \intot ds \intrd g_s(dv) \ctA_s \phi(v).
\end{equation}
Since by assumption, $f$ and $g$ solve this equation with $\mu=f_0$,
this will conclude the proof.

\vip

{\it Step 1.} Let $\mu \in \pdeux$. 
A c\`adl\`ag adapted $\rd$-valued stochastic process $(V_t)_{t\in[0,T]}$ on
some filtered probability space $(\Omega,\cF,(\cF_t)_{t\in[0,T]}, P)$
is said to solve the martingale problem $MP((\ctA_t)_{t\in[0,T]}, \mu)$
if $P\circ V_0^{-1}=\mu$ and if for all $\phi \in C^2_c$, 
$(M^\phi_t)_{t\in[0,T]}$ is a $(\Omega,\cF,(\cF_t)_{t\in[0,T]}, P)$-martingale,
where
\begin{equation}\label{mpmp}
M^\phi_t = \phi(V_t) - \intot \ctA_s\phi(V_s) ds.
\end{equation}
Assume for a moment that:

(i) there exists a countable subset $(\phi_k)_{k\geq 1}\subset C^2_c$
such that for all $t\in[0,T]$, the closure 
(for the bounded pointwise convergence)
of $\{(\phi_k,\ctA_t\phi_k), k\geq 1\}$
contains $\{(\phi, \ctA_t \phi), \phi\in C^2_c\}$,

(ii) for each $v_0\in\rd$, there exists a solution to
$MP((\ctA_t)_{t\in[0,T]}, \delta_{v_0})$,

(iii) for each $v_0\in\rd$, uniqueness (in law) holds for 
$MP((\ctA_t)_{t\in[0,T]}, \delta_{v_0})$.

Then, due to Bhatt-Karandikar \cite[Theorem 5.2]{bk} (see also
Remark 3.1 and Theorem 5.1 in \cite{bk} and Theorem B.1 in \cite{kh}),
uniqueness for (\ref{eqlir}) holds. 

\vip

{\it Step 2.} First, (i) holds: consider any countable subset 
$(\phi_k)_{k\geq 1}\subset
C^2_c$ dense in $C^2_c$, in the sense that for $\psi \in C^2_c$ with supp 
$\psi \subset \{|v|\leq x\}$,
there exists a subsequence $\phi_{k_n}$ such that supp $\phi_{k_n}\subset  
\{|v|\leq 2x\}$,
and $\lim_{n \to \infty} (||\psi-\phi_{k_n} ||_\infty +||\psi'-\phi'_{k_n} 
||_\infty 
+||\psi''-\phi''_{k_n} ||_\infty) =0$.

We then have to prove that, for $t\in [0,T]$,

(a) $\ctA_t \phi_{k_n}(v)$ tends to $\ctA_t \psi(v)$ for all $v\in \rd$,

(b) and that $\sup_n ||\ctA_t \phi_{k_n}||_\infty <\infty$.

First, using Lemma \ref{fundest}-(ii) and {\bf (A4)}$(\gamma)$, we get, 
for $v\in \rd$,
\begin{eqnarray}
&&|\ctA_t \phi_{k_n}(v) - \ctA_t \psi(v)  |  =  
|\ctA_t [\phi_{k_n}-\psi](v) | \ala
&&\leq C (||\psi'-\phi'_{k_n} ||_\infty 
+||\psi''-\phi''_{k_n} ||_\infty)\intrd f_t(dv_*)  (|v-v_*|+|v-v_*|^2) 
|v-v_*|^\gamma,
\end{eqnarray}
which tends to $0$ as $n$ tends to infinity, 
provided $\alpha_t:=\intrd f_t(dv_*)  (|v-v_*|+|v-v_*|^2) 
|v-v_*|^\gamma<\infty$.
But $\alpha_t \leq \intrd f_t(dv_*)  (1+2|v-v_*|^2) |v-v_*|^\gamma \leq  
J_\gamma(f_t) + 
2\intrd f_t(dv_*)|v-v_*|^{2+\gamma}$. If $\gamma \in [-2,0]$, then 
$|v-v_*|^{2+\gamma} \leq 1+2|v|^2+2|v_*|^2$, so that $\alpha_t \leq 
J_\gamma(f_t)+C(m_2(f_t)+
1+|v|^2)<\infty$ by assumption. If $\gamma \in (-3,-2)$, then 
$|v-v_*|^{2+\gamma} \leq 1+|v-v_*|^\gamma$, so that $\alpha_t \leq 3 
J_\gamma(f_t) +1<\infty$.
Thus (a) holds, and it remains to prove (b). Set $M:= \sup_n 
(||\phi_{k_n}||_\infty 
+||\phi'_{k_n} ||_\infty +||\phi''_{k_n} ||_\infty)$. If $\gamma \in (-3,-2]$,
then one easily deduces from {\bf (A4)}$(\gamma)$ and Lemma 
\ref{fundest}-(ii) that 
for all $t$, all $n$, all $v$, $|\ctA_t \phi_{k_n} (v)| \leq C M 
(1+J_\gamma(f_t))$, which implies (b).
If $\gamma\in (-2,0]$, we use Lemma \ref{fundest}-(iii), and get, since 
$|v-v_*|^{2+\gamma} \leq 1+2|v|^2+2|v_*|^2$,
\begin{eqnarray}
|\ctA_t \phi_{k_n}(v)|&\leq& C M \intrd f_t(dv_*) \left[ 1+ 
|v-v_*|^{2+\gamma}\indiq_{\{|v|\leq 4x \}}
+ \frac{|v-v_*|^{2+\gamma}}{|v|^2}  \indiq_{\{|v|\geq 4x \}}  \right] \ala
&\leq & C M \intrd f_t(dv_*) \left[ 1+ x^2 + |v_*|^2 + x^{-2} 
(1+|v_*|^2)\right] \ala
&\leq& C M ( 1+ x^2 + x^{-2} + m_2(f_t)(1+x^{-2})).
\end{eqnarray}

\vip

{\it Step 3.} Classical arguments (see e.g.
Tanaka \cite[Section 4]{tanaka76} or
Desvillettes-Graham-M\'el\'eard \cite[Theorem 3.8]{dgm})
yield that a process $(V_t)_{t\in[0,T]}$ 
on some
filtered  probability space $(\Omega,\cF,(\cF_t)_{t\in[0,T]}, P)$
is
a solution to $MP((\ctA_t)_{t\in[0,T]}, \delta_{v_0})$ 
if and only if 
there exists, on a possibly enlarged probability space,  
a $(\cF_t)_{t\in[0,T]}$-adapted $f$-Poisson measure 
$N(dt,dv,dz,d\varphi)$
such that (recall the expression (\ref{agood}) 
of $\ctA$, and that $\tN$ stands for the compensated Poisson measure)
\begin{eqnarray}\label{sde}
V_t=v_0+\intot\intrd\intzi\intzdp c(V_\sm,v,s,\varphi)
\tN(ds,dv,dz,d\varphi)\ala
-\kappa_0 \intot ds \intrd f_s(dv) \Phi(|V_s-v|) (V_s-v).
\end{eqnarray}
We thus just have to prove the existence and uniqueness in law 
for solutions to (\ref{sde}). 

\vip

{\it Step 4.} We now check that for $(V_t)_{t\in [0,T]}$ 
a solution to (\ref{sde}), 
\begin{equation}\label{unik}
E\left[\sup_{[0,T]} |V_t|^2 \right] <\infty.
\end{equation}
We introduce, for $n\geq 1$, the stopping time $\tau_n=\inf \{t\geq 0, 
|V_t|\geq n\}$.
Using the Doob and Cauchy-Schwarz inequalities, thanks to (\ref{esti1}) and {\bf (A4)}($\gamma$) we get
\begin{eqnarray}
E\left[\sup_{[0,t\land \tau_n]} |V_s|^2 \right] \leq  C|v_0|^2 
+ C E\left[\int_0^{t\land \tau_n} ds \intrd f_s(dv) \int_0^\infty dz 
\intzdp d\varphi 
|c(V_\sm,v,s,\varphi) |^2  \right] \ala
+ C_T E\left[\left| \int_0^{t\land \tau_n} ds \intrd f_s(dv) 
\Phi(|V_\sm-v|)|V_\sm-v| \right|^2
\right] \\
\leq  C|v_0|^2  + C E\left[\int_0^{t\land \tau_n} ds \intrd f_s(dv) 
|V_s-v|^{2+\gamma} \right]
+  C_TE\left[\left|\int_0^{t\land \tau_n} ds \intrd f_s(dv) 
|V_s-v|^{1+\gamma} \right|^2 \right].\nonumber
\end{eqnarray}
Separating as usual the cases $\gamma \in (-3,-2]$, $\gamma \in (-2,-1]$ 
and $\gamma \in (-1,0]$
(see e.g. the proof of Lemma \ref{m2k}), we obtain in any case
\begin{eqnarray}
E\left[\sup_{[0,t\land \tau_n]} |V_s|^2 \right] \leq  C|v_0|^2 + 
C_T E\left[\int_0^{t\land \tau_n} ds |V_s|^2 \right] + C_T \int_0^T 
J_\gamma(f_s)ds\ala 
+C_T \left( \int_0^T J_\gamma(f_s) ds\right)^2  + C_T \int_0^T ds m_2(f_s) \ala
\leq C_T(v_0,f) + C_T\intot ds E\left[ \sup_{[0,s\land \tau_n]} |V_u|^2 
\right].
\end{eqnarray}
The Gronwall Lemma ensures us that for all $n\geq 1$,  
$E\left[\sup_{[0,T\land \tau_n]} |V_s|^2 \right] \leq C_T(v_0,f)e^{TC_T}$.
We immediately deduce that a.s., $\lim_n \tau_n=\infty$, and then that 
(\ref{unik}) holds.

\vip

{\it Step 5.} Let $(V_t)_{t\in [0,T]}$ be a c\`adl\`ag  
adapted solution to (\ref{sde}), for some $f$-Poisson measure $N$.
Recall Lemma \ref{tanana}, 
and define $(\tV^k_t)_{t\in [0,T]}$ as the solution (which clearly exists
and is unique
since $\indiq_{\{z\leq k\}}N(ds,dv,dz,d\varphi)$ is a.s. finite) 
\begin{eqnarray}
&&\tV^k_t=v_0 + \intot \intrd\int_0^k\intzdp  c(\tV^k_\sm,v,z,\varphi+
\varphi_0(V_\sm-v,\tV^k_\sm-v)) N(ds,dv,dz,d\varphi).
\end{eqnarray}
The map $\varphi_0(V_\sm-v_,\tV^k_\sm-v)$ being predictable, we deduce
from Lemma \ref{plp} that $N_0$ defined by $N_0(A)=\int_0^T 
\int_\rd\int_0^\infty \int_0^{2\pi} 
\indiq_A(s,v,z,\varphi+\varphi_0(V_\sm-v,\tV^k_\sm-v))N(ds,dv,dz,d\varphi)$ 
is still a $f$-Poisson measure. 
Hence  $(\tV^k_t)_{t\in[0,T]}$ is nothing but the $(v_0,f,k,N_0)$-process, 
and its law is entirely
determined by $v_0$ and $f$, see Notation \ref{notagpro}.

\vip

We will now show that $(\tV^k_t)_{t\geq 0}$ goes
in probability to $(V_t)_{t\geq 0}$, which will yield the uniqueness
of the law of $(V_t)_{t\geq 0}$ and thus will end the proof of (iii). 
To this end,
we first observe that due to Step 4 and Lemma \ref{m2k},
\begin{eqnarray}\label{bl2}
C(T,f,v_0):= E\left[ \sup_{[0,T]} |V_t|^2\right] + \sup_k E\left[ 
\sup_{[0,T]} |\tV^k_t|^2\right]
<\infty.
\end{eqnarray}
Then, we may rewrite, recalling (\ref{dfhzero})
\begin{eqnarray}
\tV^k_t&=&v_0+ \intot \intrd\int_0^k\intzdp  c(\tV^k_\sm,v,z,\varphi+
\varphi_0(V_\sm-v,\tV^k_\sm-v)) \tN(ds,dv,dz,d\varphi)\ala
&& + \int_0^t ds \intrd f_s(dv) h_0^k(|\tV^k_s-v|) (\tV^k_s-v).
\end{eqnarray}
This is due to the fact that $\int_0^k dz \int_0^{2\pi} d\varphi 
c(V,v,z,\varphi)
= - (V-v) h_0^k(|V-v|)$. Hence, 
\begin{eqnarray}
(V_t-\tV^k_t) &=& A^{1,k}_t+ A^{2,k}_t + A^{3,k}_t+ A^{4,k}_t,
\end{eqnarray}
where
\begin{eqnarray*}
A^{1,k}_t&:=& \intot \intrd\int_0^k \intzdp [c(V_\sm,v,z,\varphi)-
c(\tV^k_\sm,v,z,\varphi+
\varphi_0(V_\sm-v,\tV^k_\sm-v))]\tN(ds,dv,dz,d\varphi),\ala
A^{2,k}_t&:=& \intot \intrd\int_k^\infty \intzdp c(V_\sm,v,z,\varphi) 
\tN(ds,dv,dz,d\varphi),\ala
A^{3,k}_t&:=& \int_0^t ds \intrd f_s(dv) [h_0^k(|V_s-v|) (V_s-v)
- h_0^k(|\tV^k_s-v|) (\tV^k_s-v) ],\ala
A^{4,k}_t&:=& \int_0^t ds \intrd f_s(dv) [\kappa_0 \Phi(|V_s-v|) (V_s-v) 
- h_0^k(|V_s-v|) (V_s-v) ].
\end{eqnarray*}

First, we immediately deduce from the 
Doob inequality, (\ref{esti2}) and {\bf (A4)}$(\gamma)$, that
\begin{eqnarray}
E\left[\sup_{[0,t]}|A^{1,k}_s|^2\right]&\leq& C \intot ds \intrd f_s(dv) 
E\left[|V_s-\tV^k_s|^2 (|V_s-v|^\gamma +
|\tV^k_s-v|^\gamma )\right] \ala
&\leq & C \intot ds E\left[|V_s-\tV^k_s|^2 \right] 
J_\gamma(f_s).
\end{eqnarray}
Next, Doob's inequality, (\ref{mesti1}) and {\bf (A4)}$(\gamma)$ yield
\begin{equation}
E\left[\sup_{[0,t]}|A^{2,k}_s|^2\right]\leq C \int_0^T ds \intrd 
f_s(dv) E\left[|V_s-v|^{2+\gamma}
\e_0^k(|V_s-v|)\right] \to 0
\end{equation}
as $k$ tends to infinity, since due to Lemma \ref{moinsfundest}, 
$\e_0^k$ is bounded
and tends simply to $0$, and since $|V_s-v|^{2+\gamma}$ belongs to 
$L^1(dsf_s(dv)P(d\omega))$ (as usual, if $2+\gamma \geq 0$, this follows from 
(\ref{bl2}) and the fact that $f\in \lipdeux$, while if $2+\gamma<0$, we just
use that $f\in \lunjgamma$).

Using (\ref{mesti2}), {\bf (A4)}$(\gamma)$, and then the Cauchy-Schwarz 
inequality, we obtain
\begin{eqnarray}
E\left[\sup_{[0,t]}|A^{3,k}_s|^2\right]
&\leq& CE \left[ \left( \intot ds \intrd f_s(dv) 
|V_s-\tV^k_s| (|V_s-v|^\gamma + |\tV^k_s-v|^\gamma )\right)^2\right] \ala
&\leq & C E \left[ \left( \intot ds |V_s-\tV^k_s| J_\gamma(f_s) \right)^2
\right]\ala
&\leq& C \left( \intot ds E[|V_s-\tV^k_s|^2] J_\gamma(f_s) \right)
\left( \int_0^T ds J_\gamma(f_s) \right).
\end{eqnarray}
Finally, (\ref{mesti3}) and {\bf (A4)}$(\gamma)$ allow us to obtain,
\begin{eqnarray}
E\left[\sup_{[0,t]}|A^{4,k}_s|^2\right]
&\leq& CE \left[ \left( \int_0^T ds \intrd f_s(dv) |V_s-v|^{1+\gamma}
\e_0^k(|V_s-v|)\right)^2\right] \to 0
\end{eqnarray}
using similar arguments as for the study of $A^{1,k}$.
We thus obtain, for some $\eta_k$ going to $0$,
some constant $C(T,f)$,
\begin{equation}
E\left[|V_t-\tV^k_t|^2\right] 
\leq \eta_k + C(T,f) \intot ds E[|V_s-\tV^k_s|^2] J_\gamma(f_s).
\end{equation}
The generalized Gronwall Lemma \ref{ggl} and the fact that $f\in\lunjgamma$
by assumption allow us to conclude that
\begin{equation}
E\left[\sup_{[0,T]}|V_t-\tV^k_t|^2\right] \leq \eta_k \exp[C(T,f) \int_0^T 
ds J_\gamma(f_s)]
\to 0
\end{equation}
as $k$ tends to infinity. Hence $(\tV^k_t)_{t\in [0,T]}$ goes in probability to
$(V_t)_{t\in [0,T]}$.

\vip

{\it Step 6.} It remains to prove (ii), i.e. the existence for 
$MP((\ctA_t)_{t\in[0,T]}, \delta_{v_0})$. We use to this
aim a Picard iteration. Let $N$ be a $f$-Poisson measure as in Step 2.
We consider the constant process $V^0_t\equiv v_0$, we set $\varphi^*_0=0$ and
we define recursively
\begin{eqnarray}\label{rec}
V^{n+1}_t &=& v_0 + \intot \intrd \int_0^\infty \intzdp c(V^n_\sm,v,z,\varphi
+\varphi^*_n(s,v))
\tN(ds,dv,dz,d\varphi) \ala
&& - \kappa_0 \intot ds \intrd f_s(dv) \Phi(|V^n_s-v|)(V^n_s-v),
\end{eqnarray}
and $\varphi^*_{n+1}(s,v)=\varphi^*_n(s,v)+
\varphi_0(V^{n+1}_\sm-v,V^n_\sm-v)$, where $\varphi_0$ is defined by Lemma \ref{tanana}.
A computation based on Doob's inequality using (\ref{esti2}) and that
$|x\Phi(x)-y\Phi(y)|\leq \min(x,y)|\Phi(x)-\Phi(y)|+|x-y|(\Phi(x)+\Phi(y))
\leq C |x-y|(x^\gamma+y^\gamma)$
yields that for all $t\in [0,T]$, all $n\geq 1$,
\begin{eqnarray}
E\left[\sup_{[0,t]}|V^{n+1}_s-V^n_s|^2\right] &\leq& 
C \intot ds \intrd f_s(dv) E[|V^n_s-V^{n-1}_s|^2
(|V^n_s-v|^\gamma+ |V^{n-1}_s-v|^\gamma) ]\ala
&&+ C E \left[ \left( \intot ds \intrd f_s(dv) |V^n_s-V^{n-1}_s|
(|V^n_s-v|^\gamma+ |V^{n-1}_s-v|^\gamma) ]\right)^2\right]\ala
&\leq & C \intot ds E[|V^n_s-V^{n-1}_s|^2]J_\gamma(f_s)\ala
&&+ C \left( \int_0^T ds J_\gamma(f_s) \right)\left(\intot ds 
E[|V^n_s-V^{n-1}_s|^2]J_\gamma(f_s) \right) \ala
&\leq & C(T,f) \intot ds E[|V^n_s-V^{n-1}_s|^2]J_\gamma(f_s).
\end{eqnarray}
Using Lemma \ref{ggl}, we deduce that $\sum_n 
E\left[\sup_{[0,T]}|V^{n+1}_s-V^n_s|^2\right] <\infty$, so that
there exists a c\`adl\`ag adapted process $(V_t)_{t\in[0,T]}$ such that
\begin{equation}\label{supconv}
\lim_n E\left[\sup_{[0,T]}|V_s-V^n_s|^2\right]=0 
\; \hbox{ and } \; E\left[\sup_{[0,T]}|V_s|^2\right]<\infty.
\end{equation}
To show that $(V_t)_{t\geq 0}$ satisfies
$MP((\ctA_t)_{t\in[0,T]}, \delta_{v_0}))$, we need to check that for all 
$0\leq s_1\leq...\leq s_l \leq s \leq t\leq T$, 
all $\phi_1,...,\phi_l \in C_c(\rd)$,
and all $\phi \in C^2_c$,
\begin{equation}\label{tt}
E\left[\left(\prod_{i=1}^l \phi_i(V_{s_i}) \right) 
\left(\phi(V_t)-\phi(V_s)-\int_s^t \ctA_u \phi(V_u)du \right) \right]=0.
\end{equation}
But we know from (\ref{rec}) that for all $n\geq 1$,
\begin{equation}\label{ttn}
E\left[\left(\prod_{i=1}^l \phi_i(V^n_{s_i}) \right) 
\left(\phi(V_t^{n+1})-\phi(V_s^{n+1})-\int_s^t \ctA_u \phi(V^n_u)du \right) 
\right]=0.
\end{equation}
It remains to pass to the limit in (\ref{ttn}) to obtain (\ref{tt}).
It suffices to use (\ref{supconv}), and to observe that 
the map $(v_u)_{u\in[0,T]} \mapsto \cK((v_u)_{u\in[0,T]}):=
\left(\prod_{i=1}^l \phi_i(v_{s_i}) \right)
\left(\phi(v_t)-\phi(v_s)-\int_s^t \ctA_u \phi(V_u)du \right)$ is continuous
on $\dt$ (endowed here with the uniform convergence) and bounded.
First, we have shown in Step 2-(b) that $\ctA_t \phi$ is bounded
by $C_\phi(1+m_2(f_t)+J_\gamma(f_t))$, and we easily deduce that $\cK$ 
is bounded
since $f\in \lipdeux \cap\lunjgamma$.
Next, the only difficulty concerning the continuity of $\cK$ is to check
that of $(v_u)_{u\in[0,T]} \mapsto \int_s^t ds \ctA_u \phi(v_u)du$.
Since $\ctA_u \phi$ is bounded
by $C_\phi(1+m_2(f_u)+J_\gamma(f_u))\in L^1([0,T])$, it suffices to check that
for each $u\in[0,T]$, $\ctA_u \phi$ is continuous on $\rd$. This follows from
Lemma \ref{acont}.
\end{proof}

We finally conclude the section with the 

\begin{proof}[Proof of Lemma \ref{cacv}]
The proof is actually almost contained in that of Lemma \ref{unili}. Indeed,
consider a weak solution $f\in \lipdeux \cap \lunjgamma$ to (\ref{be}),
and define, for each $t\in[0,T]$, the operator $\ctA_t$ by (\ref{ctat}).
We have checked in Step 6 the existence of a
solution to $MP((\ctA_t)_{t\in[0,T]}, \delta_{v_0})$.
Of course, the same arguments allow us to prove the existence of a solution
$(V_t)_{t\in[0,T]}$ to  $MP((\ctA_t)_{t\in[0,T]}, f_0)$, since
$f_0 \in \pdeux$. Consider now the law $g_t$ of $V_t$. Then taking expectations
in (\ref{mpmp}), one easily deduces that $(g_t)_{t\in[0,T]}$ solves 
(\ref{lbe}), so that due to Lemma \ref{unili}, $g=f$.
Hence for each $t\in [0,T]$, the law of $V_t$ is nothing but $f_t$.
Next, using Steps 3 and 5, we have shown how to build some $f$-Poisson measures
$N^k$ in such a way that for $(\tV^k_t)_{t\in [0,T]}$ the
$(V_0,f,k,N^k)$-process, $\lim_k E[\sup_{[0,T]} |V_t-\tV^k_t|^2]=0$.
Denoting by $f^k_t$ the law of $\tV^k_t$, this of course implies that
$\lim_k \sup_{[0,T]} W_2(f_t,f^k_t)=0$, which was our goal.
\end{proof}

\section{Applications}
\label{seccor} \setcounter{equation}{0}

We now prove our well-posedness results.
We start with the case of regularized velocity cross sections.

\begin{proof}[Proof of Corollary \ref{nul}]
We assume {\bf(A1-A2-A3-A4$(0)$)}.  We oberve that for any $g\in \pdeux$,
$J_0(g) \leq 1$. Hence for any pair of solutions $(f_t)$ and 
$(\tf_t)_{t\in[0,T]}$, and such that one of them has a density for all times,
(\ref{resultatnul}) follows immediately from Theorem
\ref{maintheo}.

Let us now prove the existence result.
We found no reference about such an existence result, but it is completely
standard. The case where $f_0 \in \pdeux$ has a finite entropy, that is
when $\intrd f_0(v) \log (1+f_0(v))dv<\infty$ can be treated following the line
of Villani \cite{villexi} (and is much more easy since we assume here
that $\Phi$ is bounded, while true soft potentials were treated there). 
The obtained solution has furthermore a finite entropy (and thus a density)
for all times.
Then the existence result for any $f_0 \in \pdeux$
is a straightforward consequence of (\ref{resultatnul}): 
it suffices to consider a sequence
$f_0^n \in \pdeux$ with finite entropy, tending to $f_0$
for the distance
$W_2$, and the associated weak solutions $(f^n_t)_{t\in [0,T]}$ to 
(\ref{wbe}).
Then (\ref{resultatnul}) ensures us that there exists $(f_t)_{t\in[0,T]}$ 
such that 
$\lim_{n \to \infty} \sup_{[0,T]} W_2(f^n_t,f_t)=0$. It is then not hard
to pass to the limit in (\ref{wbe}), and to deduce that $(f_t)_{t\in[0,T]}$
is indeed a weak solution to (\ref{wbe}).

Let us now extend (\ref{resultatnul}) to any pair of solutions
$(f_t)_{t\in[0,T]}$, $(\tf_t)_{t\in[0,T])}$, without assuming that 
$f_t$ (or $\tf_t$) has a density for all times: consider $f_0^n$ with a finite
entropy, such that $W_2(f_0,f_0^n)$ tends to $0$, and the associated
solution $(f^n_t)_{t\in [0,T]}$. Since $f^n_t$ has a finite entropy 
(and thus a density) for all times, we deduce that for all $n\geq 1$, all
$t\in [0,T]$,
\begin{eqnarray}
W_2(f_t,\tf_t) &\leq& W_2(f_t,f^n_t)+W_2(f^n_t,\tf_t)\leq
[W_2(f_0,f^n_0)+W_2(f^n_0,\tf_0)]e^{Kt}\ala
&\leq& 
[2W_2(f_0,f^n_0)+W_2(f_0,\tf_0)]e^{Kt} 
\end{eqnarray}
by the triangular inequality. Letting $n$ tend to infinity, we obtain
(\ref{resultatnul}).
The uniqueness result is now straightforward.
\end{proof}

We now study the case of soft potentials.

\begin{proof}[Proof of Corollary \ref{sp}.]
We consider $\gamma \in (-3,0)$, and assume 
{\bf (A1)-(A2)-(A3)-(A4)}$(\gamma)$. 

\vip

First note that 
we consider only solutions with densities here, since we work in 
$L^p$ with $p>3/(3+\gamma)>1$.

We also observe that for $\alpha \in (-3,0)$, and for
$p\in (3/(3+\alpha),\infty]$, there exists a constant $C_{\alpha,p}$ such that
for any $g \in \pdeux \cap L^p(\rd)$, any $v\in \rr^d$,
\begin{eqnarray}\label{suplq}
J_\alpha(g)&=&\sup_{v\in\rd}\intrd g(v_*) \, |v-v_*|^{\alpha} \, dv_* \ala
&\leq& \sup_{v\in\rd}
\int_{|v_*-v|<1} g(v_*) \, |v-v_*|^{\alpha} \, dv_* + \sup_{v\in\rd}
\int_{|v_*-v|\geq 1} 
g(v_*) \, dv_*\ala
&\leq& C_{\alpha,p} \, \| g \|_{L^p(\rd)} +1 ,
\end{eqnarray}
where 
$C_{\alpha,p}=\sup_{v\in\rd} [\int_{|v_*-v|\leq 1} 
|v-v_*|^{\alpha p/(p-1)}dv_*]^{(p-1)/p}= [\int_{|v_*|\leq 1} 
|v_*|^{\alpha p/(p-1)}dv_*]^{(p-1)/p} <\infty$, since by assumption,
$\alpha p/(p-1)>-3$.

\vip

{\it Step 1.} We first observe that point (i)
is an immediate consequence of Theorem \ref{maintheo} and (\ref{suplq}),
since we deal with solutions with densities.

\vip

{\it Step 2.} We now check point (ii). 
We only have to prove the existence of solutions, since uniqueness
follows from point (i).
Using some results of Villani \cite[Theorems 1 and 3]{villexi}, 
we know that for $\gamma \in (-3,0)$,
for any $f_0\in \pdeux$ satisfying
\begin{equation}\label{fee}
\intrd f_0(v) \log (1+f_0(v)) dv <\infty,
\end{equation}
there exists a weak solution 
$(f_t)_{t\geq 0}\in L^\infty([0,\infty), L^1(\rd,(1+|v|^2)\, dv))$ to 
(\ref{be}) starting from $f_0$.
Recall that if $f_0\in\pdeux \cap L^p(\rd)$ for some $p>1$,
then (\ref{fee}) holds.
Then the existence result of point (ii) follows immediately from the
following {\it a priori} estimate,
which guarantees that if $f_0 \in L^p(\rr^d)$ for some $p>3/(3+\gamma)$, then 
this bound propagates locally (in time): 
there exists $C=C(p,\gamma,\kappa_1,\kappa_2,\kappa_3)$ such that 
any weak solution to (\ref{be}) {\it a priori} satisfies
\begin{equation}\label{cqa}
\frac{d}{dt} \| f_t \|_{L^p} \leq C \, \big(1+ \| f_t \|_{L^p(\rd)}^2\big).
\end{equation} 
This will guarantee that for $0\leq 
t< T_*:=\frac{1}{C}(\pi/2 - \arctan ||f_0||_{L^p})$, we have 
\begin{equation}\label{tat}
\| f_t \|_{L^p} \leq \tan \left( \arctan \| f_0 \|_{L^p} + C \, t \right).
\end{equation}
Thus point (ii) will be proved.

\vip

To obtain (\ref{cqa}), we follow the method
of Desvillettes-Mouhot, see \cite[Proposition 3.2]{dm}. 
First, one classically may replace, in $\cA\phi$ taken in the form
(\ref{afini}), $\beta(\theta)$
by $\hat\beta(\theta)=
[\beta(\theta)+\beta(\pi-\theta)]\indiq_{\{\theta\in (0,\pi/2]\}}$,
see e.g. \cite[Introduction]{advw} or \cite[Section 2]{dm}. 
This relies on the use of (\ref{obs2}).
Next, following the line of 
\cite[proof of Proposition 3.2]{dm}, we get
\begin{equation}
\frac{d}{dt} \intrd |f_t(v)|^p dv \leq  (p-1) \intrd f_t(v_*)dv_*
\intrd dv \Phi(|v-v_*|) \int_0^{\pi/2}\hat\beta(\theta)d\theta
\int_0^{2\pi} d\varphi [f^p_t(v')-f_t^p(v)],
\end{equation}
where $v'$ is given by (\ref{dfvprime}). Using now the cancelation Lemma
of Alexandre-Desvillettes-Villani-Wennberg \cite[Lemma 1]{advw}
(with $N=3$, $f$ given by $f^p_t$, and $B(|v-v_*|,\cos \theta)\sin\theta=
\hat\beta(\theta) \Phi(|v-v_*|)$), we obtain
\begin{eqnarray}
&&\frac{d}{dt} \intrd |f_t(v)|^p dv \leq  2\pi(p-1) \intrd f_t(v_*)dv_*
\intrd f_t^p(v) dv \int_0^{\pi/2}\hat\beta(\theta)d\theta\ala 
&&\hskip4cm
\left|\cos^{-3}(\theta/2)\Phi(|v-v_*|\cos^{-1}(\theta/2)) - \Phi(|v-v_*|)
\right|.
\end{eqnarray}
Due to {\bf (A4)}$(\gamma)$, we know that
$|\Phi(x)-\Phi(y)|\leq \kappa_3(x^\gamma+y^\gamma)|x-y|/\min(x,y)$. One easily deduces
that for some constant $C=C(\kappa_3)$,
for all $x\in\rr_+$, all $\theta\in (0,\pi/2]$,
\begin{eqnarray}
\left|\cos^{-3}(\theta/2)\Phi(x\cos^{-1}(\theta/2)) - \Phi(x)\right|
&\leq& 
\left|\cos^{-3}(\theta/2)-1 \right| \Phi(x)\ala
&&+ \cos^{-3}(\theta/2)\left|\Phi(x\cos^{-1}(\theta/2)) - \Phi(x)\right|\ala
&\leq & C \theta^2 x^\gamma + C  |x \cos^{-1}(\theta/2)-x|  (x^\gamma + (x \cos^{-1}(\theta/2))^\gamma) / x \ala
&\leq & C \theta^2 x^\gamma.
\end{eqnarray}
we used here that $|\cos^{-3}(\theta/2)-1|+|\cos^{-1}(\theta/2)-1|\leq$
cst$\theta^2$ for all $\theta \in (0,\pi/2]$.

Since $\int_0^{\pi/2} \theta^2 \hat \beta (\theta) d\theta \leq \kappa_1$,
we finally get, with $C=C(\gamma,p,\kappa_1,\kappa_2,\kappa_3)$,
\begin{eqnarray}
\frac{d}{dt} \intrd |f_t(v)|^p dv &\leq& C
\intrd f_t^p(v) dv \intrd |v-v_*|^\gamma f_t(v_*)dv_*\ala 
&\leq &  C \left( \intrd f_t^p(v) dv + C_{\gamma,p}
\left[\intrd f_t^p(v) dv\right]^{1+1/p} \right)
\end{eqnarray}
the last inequality using (\ref{suplq}). This yields
\begin{eqnarray}
\frac{d}{dt} ||f_t||_{L^p}
&\leq& C \left( ||f_t||_{L^p}+ C_{\gamma,p}
||f_t||_{L^p}^{2} \right),
\end{eqnarray}
from which (\ref{cqa}) immediately follows.
\end{proof}

\section{Appendix}

We start this annex by recalling the generalized Gronwall Lemma and 
the associated
Picard Lemma.

\begin{lem}\label{ggl}
Let $T\geq 0$ and a nonnegative function $v$ on $[0,T]$ such that 
$\int_0^T v(s) ds <\infty$.

(i) Any nonnegative bounded function on $[0,T]$ satisfying $u(t) \leq a+
\int_0^t u(s)v(s)ds$ for all $t\in [0,T]$ 
also satisfies $u(t) \leq a \exp(\int_0^t v(s)ds)$ for all $t\in [0,T]$.

(ii) Consider a sequence of nonnegative functions $u_n$ on $[0,T]$, 
with $u_0$ bounded
and $u_{n+1}(t) \leq \int_0^t u_n(s)v(s)ds $ for all $n\geq 0$, all 
$t\in [0,T]$.
Then $\sum_{n\geq 0} \sup_{[0,T]} u_n(t) \leq ||u_0||_\infty  
\exp(\int_0^T v(s)ds)$.
\end{lem}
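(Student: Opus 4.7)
For part (i), the plan is the standard integrating-factor argument, carried out with just enough care to accommodate the fact that $v$ is only integrable. Set $V(t):=\int_0^t v(s)\,ds$ and $w(t):=a+\int_0^t u(s)v(s)\,ds$. Since $u$ is bounded and $v\in L^1([0,T])$, the function $w$ is well-defined, nondecreasing, and absolutely continuous on $[0,T]$, with $w(0)=a$ and $w'(t)=u(t)v(t)\leq w(t)v(t)$ for a.e. $t$ (the last inequality using $u(t)\leq w(t)$, which is the hypothesis). Consequently the a.c. function $t\mapsto w(t)e^{-V(t)}$ has nonpositive derivative a.e., hence is nonincreasing. Evaluating at $0$ gives $w(t)e^{-V(t)}\leq a$, so $u(t)\leq w(t)\leq a\,e^{V(t)}$, which is exactly the claimed bound.

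For part (ii), the plan is an induction on $n$ showing $u_n(t)\leq \|u_0\|_\infty V(t)^n/n!$ for all $t\in[0,T]$, where $V$ is as above. The case $n=0$ is the hypothesis $u_0\leq\|u_0\|_\infty$. Assuming the bound at level $n$, the recursive inequality gives
\begin{equation*}
u_{n+1}(t)\leq \int_0^t u_n(s)v(s)\,ds \leq \|u_0\|_\infty \int_0^t \frac{V(s)^n}{n!}\,v(s)\,ds = \|u_0\|_\infty\,\frac{V(t)^{n+1}}{(n+1)!},
\end{equation*}
where the last equality uses that $V$ is absolutely continuous with $V'=v$ a.e., so the integrand is the a.e. derivative of $V(s)^{n+1}/(n+1)!$ and one may apply the fundamental theorem of calculus for absolutely continuous functions. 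Summing the resulting bounds over $n\geq 0$ and taking the supremum over $[0,T]$ yields
\begin{equation*}
\sum_{n\geq 0}\sup_{[0,T]} u_n(t)\leq \|u_0\|_\infty \sum_{n\geq 0}\frac{V(T)^n}{n!}=\|u_0\|_\infty \exp\!\left(\int_0^T v(s)\,ds\right),
\end{equation*}
as required. There is no real obstacle: the only subtlety is to keep track of the absolute-continuity arguments so that the pointwise manipulation of derivatives is legitimate under the mere assumption $v\in L^1([0,T])$; everything else is routine iteration. An alternative route for (i), avoiding a.c. functions entirely, would be to iterate the inequality $n$ times to obtain $u(t)\leq a\sum_{k=0}^{n-1}V(t)^k/k!+\|u\|_\infty V(t)^n/n!$ and then let $n\to\infty$, but the integrating factor presentation above is shorter.
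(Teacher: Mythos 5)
Your argument is correct and complete: part (i) is the standard absolutely-continuous integrating-factor Gronwall argument, and part (ii) is the usual Picard iteration giving $u_n \leq \|u_0\|_\infty V(t)^n/n!$, with the only delicate point — that $V$ and $w$ are merely absolutely continuous because $v$ is only in $L^1([0,T])$ — handled properly. Note that the paper itself states this lemma as a recalled standard fact and offers no proof, so there is nothing to compare against; your write-up supplies exactly the routine argument the authors are invoking.
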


We carry on with the

\begin{proof}[Proof of Lemma \ref{raiso}] We first prove (i).
Recall that $H(\theta)=\int_\theta^\pi \beta(x)dx$,
that $G$ is its inverse function, and that $c\theta^{-1-\nu}\leq \beta(\theta)
\leq C\theta^{-1-\nu}$ for some $\nu\in(0,2)$.
Since $H(\theta) \leq a(\theta^{-\nu}-\pi^{-\nu})$ (with $a=C/\nu$),
we deduce that $G(z) \leq (z/a + \pi^{-\nu})^{-1/\nu}$.
Now for $0\leq z\leq w$
\begin{eqnarray}
0\leq G(z)-G(w) &=& - \int_z^w G'(u)du = \int_z^w \frac{du}{\beta(G(u))}
\leq \frac{1}{c}\int_z^w G(u)^{\nu+1} \ala 
&\leq& \frac{1}{c}\int_z^w (u/a + \pi^{-\nu})^{-1-1/\nu} 
\leq  A\left[(z/a+\pi^{-\nu})^{-1/\nu} -(w/a+\pi^{-\nu})^{-1/\nu}  \right]
\ala
&\leq& B \left[ (1+\e z)^{-1/\nu} -(1+ \e w)^{-1/\nu}   \right]
\end{eqnarray}
for some constants $A$, $B$, $\e>0$. We set $\alpha=1/\nu>1/2$, and first
treat the

\vip

{\it Case $\alpha \in (1/2,1]$}. Let thus $x\geq y>0$, and $z\in(0,\infty)$.
Using the inequality 
$|u^\alpha-v^\alpha|\leq $ cst $|u-v|/(u^{1-\alpha}+v^{1-\alpha})$,
we obtain, the value of $B$ changing from line to line,
\begin{eqnarray}
|G(z/x)-G(z/y)|^2&\leq& B | (1+\e z/x)^{-1} - (1+\e z/y)^{-1}|^2 ((1+\e z/x)^{\alpha-1}
+ (1+\e z/y)^{\alpha-1} )^{-2}\ala
&\leq & B \left| \frac{x}{x+\e z} - \frac{y}{y+\e z}\right|^2 
\left(\frac{x+\e z}{x} \right)^{2-2\alpha} \ala
&\leq & B  (x-y)^2 z^2 x^{2\alpha-2}
(x+\e z)^{-2\alpha}(y+\e z)^{-2} \ala
&\leq & B (x-y)^2 x^{2\alpha-2} (x+\e z)^{-2\alpha}.
\end{eqnarray}
Integrating this inequality, we get
\begin{eqnarray}
\int_0^\infty dz |G(z/x)-G(z/y)|^2&\leq& B (x-y)^2 x^{2\alpha-2} x^{1-2\alpha}
\leq B \frac{(x-y)^2}{x} \leq B\frac{(x-y)^2}{x+y},
\end{eqnarray}
since $x\geq y$ by assumption.

\vip

{\it Case $\alpha \in [1,\infty)$}. Let $x\geq y > 0$. Then
\begin{eqnarray}
|G(z/x)-G(z/y)|^2 &\leq& B | (1+\e z/x)^{-\alpha} - (1+\e z/y)^{-\alpha}|^2 \ala
&\leq& B | (1+\e z/x)^{-1 } - (1+\e z/y)^{-1}|^2, 
\end{eqnarray}
and we may use the same computation as previously with $\alpha=1$.

\vip

We leave the proof (ii) to the reader, and finally prove (iii).
We thus assume that $\Phi(x)=x^\gamma$ for some $\gamma \in (-3,0)$,
and show that (\ref{a4}) holds with $\Psi(x)=$cst$.x^\gamma$.
First, it is of course immediate that $(x-y)^2[\Phi(x)+\Phi(y)]
\leq (x-y)^2 [x^\gamma+y^\gamma]$. Next, 
\begin{eqnarray}
\min(x,y)|x-y||\Phi(x)-\Phi(y)|
&\leq& |\gamma| \min(x,y) (x-y)^2 \min(x,y)^{\gamma-1} \ala
&\leq& |\gamma| (x-y)^2 \min(x,y)^\gamma 
\leq  |\gamma| (x-y)^2 (x^\gamma+y^\gamma).  
\end{eqnarray}
Finally, 
\begin{eqnarray}
\min(x^2,y^2)\frac{|\Phi(x)-\Phi(y)|^2}{\Phi(x)+\Phi(y)}
&\leq& |\gamma|^2 \min(x^2,y^2) (x-y)^2 \frac{\min(x,y)^{2\gamma-2}}
{x^\gamma+y^\gamma} \ala
&\leq& |\gamma|^2 (x-y)^2 \min(x,y)^\gamma 
\leq  |\gamma|^2 (x-y)^2 (x^\gamma+y^\gamma).  
\end{eqnarray}
As a conclusion, (\ref{a4}) holds with $\Psi(x):= (1+|\gamma| + \gamma^2) 
x^\gamma$.
\end{proof}

Next, we give the 

\begin{proof}[Proof of (\ref{majfond})]
Let thus $\phi\in C^2_\infty$, denote by $\phi''$ its Hessian matrix,
and set 
$\Delta=\Delta(v,v_*,\theta,\varphi)=\phi(v')+\phi(v'_*)-\phi(v)-\phi(v_*)$,
where we used the shortened notation (\ref{dfvprime}).
Recalling that $v'=v+a$ while $v'_*=v_*-a$, a Taylor expansion yields that
 for some $w_1,w_2 \in \rd$,
$\Delta= a.[\nabla \phi(v)-\nabla\phi(v_*)]+ \frac{1}{2} a. [\phi''(w_1)+
\phi''(w_2)]a$. Recall now that 
$\int_0^{2\pi} a d\varphi = - \frac{1-\cos\theta}{2} (v-v_*)$,
and that $|a|^2= \frac{1-\cos\theta}{2} |v-v_*|^2$. Hence
\begin{eqnarray}
\left| \int_0^{2\pi} \Delta d\varphi\right| &\leq& 
\frac{1-\cos\theta}{2} |v-v_*|.|\nabla\phi(v)-\nabla\phi(v_*)| 
+ 2\pi ||\phi''||_\infty  \frac{1-\cos\theta}{2} |v-v_*|^2 \ala
&\leq & C(1-\cos\theta) ||\phi''||_\infty |v-v_*|^2,
\end{eqnarray}
which yields the desired result, since $1-\cos\theta\leq \theta^2$.
\end{proof}

Next, we treat the

\begin{proof}[Proof of Lemma \ref{rewriteA}] First, the second equality in 
(\ref{agood}) is obvious, since $c(v,v_*,z,.)$ is $2\pi$-peridodic. Next, 
we consider $\phi\in C^2_\infty$. We have already seen that $\cA \phi$ 
is well-defined
(see (\ref{majfond})), and $\ctA \phi$ is also well-defined, since, setting
$c=c(v,v_*,z,\varphi)$ for simplicity,
$|\phi(v+c)-\phi(v)-c.\nabla\phi(v)| \leq |c|^2 ||\phi''||_\infty/2$.
Using the substitution $\theta=G(z/\Phi(|v-v_*|))$, 
which yields $dz= -\Phi(|v-v_*|)\beta(\theta)d\theta$, 
we get,
\begin{eqnarray}\label{rappel1}
\int_0^\infty dz \int_0^{2\pi} d\varphi |c(v,v_*,z,\varphi)|^2
= \Phi(|v-v_*|) \int_0^{\pi} \beta(\theta)d\theta 
\int_0^{2\pi} d\varphi |a(v,v_*,\theta,\varphi)|^2 \ala
= \Phi(|v-v_*|) \int_0^{\pi} \beta(\theta)d\theta 
\int_0^{2\pi} d\varphi |v-v_*|^2 \frac{1-\cos\theta}{2}
\leq  C |v-v_*|^2 \Phi(|v-v_*|)
\end{eqnarray}
thanks to {\bf (A2)}, where $C$ depends only on $\kappa_1$. Thus $\ctA \phi$ is
well-defined for $\phi \in C^2_\infty$, and if $\phi\in C^2_b$,
\begin{equation}\label{rappel2}
|\ctA \phi(v,v_*)| \leq  C ||\phi''||_\infty |v-v_*|^2 \Phi(|v-v_*|)
+ C ||\phi'||_\infty |v-v_*| \Phi(|v-v_*|).
\end{equation}
Next, we consider again $\phi\in C^2_\infty$. Using the substitution 
$\theta=G(z/\Phi(|v-v_*|))$, we observe that (using the shortened notation
(\ref{dfvprime}))
\begin{eqnarray}\label{rappel3}
&&\ctA \phi(v,v_*)= \Phi(|v-v_*|) \left[\int_0^\pi \beta(\theta)d\theta \intzdp
d\varphi
[\phi(v')-\phi(v)-a.\nabla\phi(v)] - \kappa_0 (v-v_*).\nabla\phi(v)\right].
\end{eqnarray}
Using now (\ref{obs1}) and that $\int_0^{\pi} \beta(\theta)d\theta 
\int_0^{2\pi}d\varphi a = - \kappa_0 (v-v_*)$, we obtain
\begin{eqnarray}
&&\frac{\ctA \phi(v,v_*)+ \ctA\phi(v_*,v)}{2}
= \frac{\Phi(|v-v_*|)}{2} \int_0^\pi \beta(\theta)d\theta \intzdp d\varphi 
[\phi(v') 
+ \phi(v'_*) - \phi(v) - \phi(v_*) ],
\end{eqnarray}
which was our goal.
\end{proof}

\begin{proof}[Proof of Lemma \ref{fundest}]
First (\ref{esti1}) has already been proved, see (\ref{rappel1}).
Using again the substitution 
$\theta=G(z/\Phi(|v-v_*|))$, we obtain (\ref{esti3}):
\begin{eqnarray}
\int_0^\infty dz \left| \int_0^{2\pi} d\varphi c(v,v_*,z,\varphi) \right|
= \Phi(|v-v_*|) \int_0^\pi \beta(\theta)d\theta \left| \int_0^{2\pi} d\varphi a(v,v_*,\theta,\varphi) \right|\ala
= \pi |v-v_*| \Phi(|v-v_*|) \int_0^{\pi} \beta(\theta)d\theta 
\left[1-\cos \theta\right] \leq C |v-v_*|\Phi(|v-v_*|).
\end{eqnarray}
Next, we observe that
\begin{eqnarray}
\Delta&:=& |c(v,v_*,z,\varphi)
-c(\tv,\tv_*,z,\varphi+\varphi_0(v-v_*,\tv-\tv_*)|^2 \ala
&\leq & 4 \left|\frac{1-\cos G(z/\Phi(|v-v_*|))}{2} [(v-v_*)-(\tv-\tv_*)] ) 
\right|^2 \ala
&&+ 4 \left|\frac{\cos G(z/\Phi(|\tv-\tv_*|)) -\cos G(z/\Phi(|v-v_*|))}{2} 
[\tv-\tv_*] ) \right|^2 \ala
&&+ 4 \left|\frac{\sin G(z/\Phi(|v-v_*|))}{2} [\Gamma(v-v_*,\varphi)-
\Gamma(\tv-\tv_*,\varphi+\varphi_0(v-v_*,\tv-\tv_*))] ) 
\right|^2 \ala
&& + 4 \left|\frac{\sin G(z/\Phi(|\tv-\tv_*|)) -\sin G(z/\Phi(|v-v_*|))}{2}
 \Gamma(\tv-\tv_*,\varphi+\varphi_0(v-v_*,\tv-\tv_*) ) \right|^2.
\end{eqnarray}
Using now Lemma \ref{tanana}, that $|\Gamma(X,\varphi)|=|X|$, and
easy estimates about cosinus and sinus functions, we deduce that
\begin{eqnarray}
\Delta &\leq & C |(v-v_*)-(\tv-\tv_*)|^2 G^2(z/\Phi(|v-v_*|))\ala
&&+ C |\tv - \tv_*|^2  \left| G(z/\Phi(|v-v_*|)) - G(z/\Phi(|\tv-\tv_*|))  
\right|^2.
\end{eqnarray}
On the one hand, the substitution $\theta=G(z/\Phi(|v-v_*|))$ yields that 
$\int_0^\infty dz \int_0^{2\pi} d\varphi G^2(z/\Phi(|v-v_*|)) = 
2 \pi \Phi(|v-v_*|) \int_0^\pi \theta^2\beta(\theta)d\theta=2\pi\kappa_1
\Phi(|v-v_*|)$, and on the other hand we may use {\bf (A3)}.
We thus get
\begin{eqnarray}
\int_0^\infty dz \int_0^{2\pi} d\varphi\Delta&\leq&
C |(v-v_*)-(\tv-\tv_*)|^2 \Phi(|v-v_*|) \ala
&&+C |\tv -\tv_*|^2 \frac{(\Phi(|v-v_*|) -\Phi(|\tv-\tv_*|))^2 }
{\Phi(|v-v_*|) + \Phi(|\tv-\tv_*|)}.
\end{eqnarray}
But using a symmetry argument, we easily deduce that
\begin{eqnarray}
\int_0^\infty dz \int_0^{2\pi} d\varphi\Delta
&\leq& C |(v-v_*)-(\tv-\tv_*)|^2
(\Phi(|v-v_*|)+\Phi(|\tv-\tv_*|))\ala
&&+ C \min(|v-v_*|^2,|\tv -\tv_*|^2) 
\frac{(\Phi(|v-v_*|) -\Phi(|\tv-\tv_*|))^2 }
{\Phi(|v-v_*|) + \Phi(|\tv-\tv_*|)}.
\end{eqnarray}
Then (\ref{a4}) leads us to 
\begin{eqnarray}
\int_0^\infty dz \int_0^{2\pi} d\varphi\Delta \leq
C 
|(v-v_*)-(\tv-\tv_*)|^2 [\Psi(|v-v_*|) + \Psi(|\tv-\tv_*|)]
\end{eqnarray}
which yields (\ref{esti2}).
We finally check (\ref{esti4}). 
Integrating first against $d\varphi$, we get
\begin{eqnarray}\label{jab1}
D&:=&\int_0^\infty dz \left| \int_0^{2\pi} d\varphi [c(v,v_*,z,\varphi)
- c(\tv,\tv_*,z,\varphi)] \right| \ala
& = & \pi \int_0^\infty dz \Big| (v-v_*)[1-\cos G(z/\Phi(|v-v_*|))] -
(\tv-\tv_*)[1-\cos G(z/\Phi(|\tv-\tv_*|))] \Big| \ala
&\leq & \pi 
|(v-v_*) - (\tv-\tv_*)| \int_0^\infty dz [1-\cos G(z/\Phi(|v-v_*|))] \ala
&& + \pi |\tv-\tv_*| \int_0^\infty dz \left|\cos G(z/\Phi(|v-v_*|))- 
\cos G(z/\Phi(|\tv-\tv_*|)) \right|\ala
& \leq & \pi |(v-v_*) - (\tv-\tv_*)|
\int_0^\infty dz G^2(z/\Phi(|v-v_*|)) \ala
&&+ \pi|\tv-\tv_*|\int_0^\infty dz 
\Big|G^2(z/\Phi(|v-v_*|)) -G^2(z/\Phi(|\tv-\tv_*|)) \Big|.
\end{eqnarray}
The monotonicity of $G$ ensures us that for any $x,y>0$,
\begin{eqnarray}\label{jab2}
&&\int_0^\infty dz \Big|G^2(z/x) -G^2(z/y) \Big|=
\Big|\int_0^\infty dz G^2(z/x) - \int_0^\infty dz G^2(z/y) \Big|.
\end{eqnarray}
On the other hand, $\int_0^\infty dz G^2(z/x)=x \kappa_1$ (recall {\bf (A2)}),
thanks to the substitution $\theta=G(z/x)$.
We thus obtain
\begin{eqnarray}\label{jab3}
D&\leq& \kappa_1\pi |(v-v_*) - (\tv-\tv_*)| \Phi(|v-v_*|) 
+ \kappa_1\pi  |\tv-\tv_*|. |\Phi(|v-v_*|) - \Phi(|\tv-\tv_*|)|.
\end{eqnarray}
A symmetry argument and then (\ref{a4}) thus yields
\begin{eqnarray}
D&\leq& \kappa_1\pi |(v-v_*) - (\tv-\tv_*)| (\Phi(|v-v_*|)+ \Phi(|\tv-\tv_*|))
\ala
&&+ \kappa_1\pi \min(|v-v_*|,|\tv-\tv_*|)|\Phi(|v-v_*|) - \Phi(|\tv-\tv_*|)|
\ala
&\leq & \kappa_1\pi |(v-v_*) - (\tv-\tv_*)|(\Psi(|v-v_*|)+ \Psi(|\tv-\tv_*|)),
\end{eqnarray}
from which (\ref{esti4}) follows.

\vip

We have already checked point (ii), see (\ref{rappel2}). Let us finally
prove point (iii), following the line of \cite[Lemma 4.1]{kh}.
We thus consider $\phi\in C^2_c$, with supp $\phi \subset
\{|v|\leq x\}$. Recalling (\ref{rappel3}), we see that
\begin{eqnarray}
|\ctA\phi(v,v_*)| &\leq & \kappa_0 \Phi(|v-v_*|) |v-v_*| ||\phi'||_\infty 
\indiq_{\{|v|\leq x\}}\ala
&& + \Phi(|v-v_*|) \indiq_{\{|v|\leq 2x\}} 
\int_0^\pi \beta(\theta)d\theta \int_0^{2\pi}d\varphi
|\phi(v')-\phi(v)-a.\nabla\phi(v)| \ala
&&+ \Phi(|v-v_*|) \indiq_{\{|v|\geq 2x\}} \int_0^\pi \beta(\theta)d\theta 
\int_0^{2\pi}d\varphi |\phi(v')|=:A_1+A_2+A_3.
\end{eqnarray}
Recalling that $v'=v+a$, we deduce that $|\phi(v')-\phi(v)-a.\nabla\phi(v)|
\leq |a|^2 ||\phi''||_\infty/2$, and then recalling (\ref{rappel1}), 
we deduce that
$A_2 \leq C  ||\phi''||_\infty \Phi(|v-v_*|) |v-v_*|^2 
\indiq_{\{|v|\leq 2x\}}$.
Next, we observe that $|\phi(v')| \leq ||\phi||_\infty 
\indiq_{\{|v'|\leq x\}}$.
Since $v'=v+a$, we deduce that 
$\indiq_{\{|v|\geq 2x,|v'|\leq x\}} \leq \indiq_{\{|v|\geq 2x,|a|\geq |v|/2\}}
\leq \indiq_{\{|v|\geq 2x\}} \frac{4|a|^2}{|v|^2}$.
Thus, using (\ref{rappel1}) again,
\begin{eqnarray}
A_3 &\leq&  \Phi(|v-v_*|) \indiq_{\{|v|\geq 2x\}} \frac{4||\phi||_\infty}{|v|^2}
\int_0^\pi \beta(\theta)d\theta \int_0^{2\pi}d\varphi |a|^2 \ala
&\leq&
C \Phi(|v-v_*|) \indiq_{\{|v|\geq 2x\}} \frac{||\phi||_\infty |v-v_*|^2}
{|v|^2}.
\end{eqnarray}
As a conclusion, (\ref{atfini2}) holds, which concludes the proof.
\end{proof}

Let us now give the

\begin{proof}[Proof of Lemma \ref{moinsfundest}]
First, similarly to (\ref{rappel1}), we get
\begin{eqnarray}
\int_k^\infty dz \int_0^{2\pi} d\varphi |c(v,v_*,z,\varphi)|^2
= \Phi(|v-v_*|) \int_0^{G[k/\Phi(|v-v_*|)]} \beta(\theta)d\theta 
\int_0^{2\pi} d\varphi |a(v,v_*,\theta,\varphi)|^2 \ala
= 2\pi\Phi(|v-v_*|)  |v-v_*|^2\int_0^{G[k/\Phi(|v-v_*|)]} \beta(\theta)
 \frac{1-\cos\theta}{2}d\theta  \ala
\leq  C |v-v_*|^2 \Phi(|v-v_*|) \e_0^k(|v-v_*|)
\end{eqnarray}
by definition of $\e_0^k$, and since $(1-\cos \theta)\leq \theta^2$. 
This is nothing but (\ref{mesti1}).
Next, one easily
gets $|xh_0^k(x)-yh_0^k(y)|\leq |x-y|(h_0^k(x)+h_0^k(y))+\min(x,y)
|h_0^k(x)-h_0^k(y)|$.
On the one hand, the definition of $h_0^k$ and the substitution
$\theta=G(z/\Phi(x))$ yields $h_0^k(x)\leq \pi \Phi(x) 
\int_0^\pi (1-\cos\theta)\beta(\theta)d\theta \leq \pi \kappa_1\Phi(x)$, 
and on the other hand, $|h_0^k(x)-h_0^k(y)|\leq \pi \int_0^\infty dz|
\cos G(z/\Phi(x))
-\cos G(z/\Phi(y))|\leq C |\Phi(x)-\Phi(y)|$, recall the computations
in (\ref{jab1}-\ref{jab2}-\ref{jab3}). Hence
(\ref{a4}) yields $|xh_0^k(x)-yh_0^k(y)|\leq C |x-y|(\Phi(x)+\Phi(y)) 
+C\min(x,y)|\Phi(x)-\Phi(y)|\leq C |x-y|(\Psi(x)+\Psi(y))$, i.e. 
(\ref{mesti2}).
Next, an easy computation shows that $h_0^k(x)=\pi \Phi(x) 
\int_{G[k/\Phi(x)]}^\pi
(1-\cos\theta)\beta(\theta)d\theta$. Hence,
recalling (\ref{dfkzero}),
$|\kappa_0 x \Phi(x) - x h_0^k(x)| = x \Phi(x) \pi \int_0^{G[k/\Phi(x)]}
(1-\cos\theta)\beta(\theta)d\theta\leq x\Phi(x) \pi \e_0^k(x)$.

\vip

Finally, the fact that $\e_0^k$ is bounded by $\kappa_1$ is obvious 
from {\bf (A2)},
and for $x\geq 0$ fixed, $k/\Phi(x)$ tends to infinity, so that 
$G[k/\Phi(x)]$ tends to
$0$, and thus $\e_0^k(x)$ tends to $0$.
\end{proof}

We conclude the paper with the

\begin{proof}[Proof of Lemma \ref{acont}]
We thus assume {\bf (A1-A2-A3-A4)}$(\gamma)$ for some $\gamma \in (-3,0]$, 
and consider
$\phi \in C^2_c$, and $g\in \pdeux\cap\cJ_\gamma$. We consider a sequence 
$v_n \to v$
in $\rd$, and we have to show that $h(v_n) \to h(v)$, where $h(v):=\int_\rd
g(dv_*) \ctA \phi(v,v_*)$. Recalling (\ref{rappel3}), we write
$h(v)=h_1(v)-\kappa_0 \nabla\phi(v) . h_2(v)$, with
\begin{eqnarray}
h_1(v)&:=&\intrd g(dv_*) \Phi(|v-v_*|)\int_0^\pi \beta(\theta)d\theta \intzdp
d\varphi \Delta(v,v_*,\theta,\varphi),\ala
\Delta(v,v_*,\theta,\varphi)&:=&\phi(v+a(v,v_*,\theta,\varphi)) - \phi(v) - 
a(v,v_*,\theta,\varphi).\nabla\phi(v),\ala
h_2(v) &:=& \intrd g(dv_*) (v-v_*)\Phi(v-v_*).
\end{eqnarray}
First, due to {\bf (A4)}$(\gamma)$, one has
$|x\Phi(x)-y\Phi(y)|\leq |x-y| (\Phi(x)+\Phi(y))+\min(x,y)|\Phi(x)-\Phi(y)|\leq
C|x-y|(x^\gamma+y^\gamma)$. Thus, 
\begin{eqnarray}
&&|h_2(v_n)-h_2(v)| \leq C \intrd g(dv_*) |v_n-v| (|v_n-v_*|^\gamma + 
|v-v_*|^\gamma)
\leq C |v_n-v| J_\gamma(g) \to 0
\end{eqnarray}
as $n$ tends to infinity, since $g\in \cJ_\gamma$ by assumption.
Next, we use the map $\varphi_0$ introduced in Lemma \ref{tanana}, and 
write 
\begin{eqnarray} 
&&h_1(v_n)=\intrd g(dv_*) \Phi(|v_n-v_*|) \intzp \beta(\theta)d\theta 
\intzdp d\varphi
\Delta(v_n,v_*,\theta,\varphi+\varphi_0(v-v_*,v_n-v*)).
\end{eqnarray}
We now introduce, for $\e>0$, $h_1^\e$, which is defined as $h_1$ but 
replacing
$\Phi$ by $\Phi_\e(x):=\Phi(\max(x,\e))$. 
First, $\lim_n h_1^\e(v_n)=h_1^\e(v)$ for each $\e>0$, due to the
Lebesgue Theorem and
the following facts: 

(i) $\Phi_\e$ is continuous and bounded due to {\bf (A4)}$(\gamma)$ ;

(ii) $\lim_n \Delta(v_n,v_*,\theta,\varphi+\varphi_0(v-v_*,v_n-v*))
=\Delta(v,v_*,\theta,\varphi)$ for all $v_*,\theta,\varphi$ (because
due to Lemma \ref{tanana},
$\lim_n a(v_n,v_*,\theta,\varphi+\varphi_0(v-v_*,v_n-v*))=
a(v,v_*,\theta,\varphi)$
);

(iii) $|\Delta(v_n,v_*,\theta,\varphi+\varphi_0(v-v_*,v_n-v*))|
\leq C_\phi |v_n-v_*|^2 \theta^2 \leq C_\phi(\sup_n|v_n|^2+|v_*|^2) \theta^2$
which belongs to $L^1
(g(dv_*)\beta(\theta)d\theta d\varphi)$ due to {\bf (A2)} and since
$g\in\lipdeux$.

We thus just have to prove that $\lim_{\e\to 0} \limsup_n 
|h_1(v_n)-h_1^\e(v_n)|=0$
and $\lim_{\e\to 0} |h_1(v)-h_1^\e(v)|=0$. Using
point (iii) above and then {\bf (A2)-(A4)}$(\gamma)$, 
\begin{eqnarray}
|h_1(v_n)-h_1^\e(v_n)| &\leq& C_\phi \intrd g(dv_*) \Phi(|v_n-v_*|)
\indiq_{\{|v_n-v_*|\leq \e\}} \intzp \beta(\theta)d\theta \intzdp d\varphi 
|v_n-v_*|^2 \theta^2\ala
&\leq & C_\phi \intrd g(dv_*) |v_n-v_*|^{2+\gamma}
\indiq_{\{|v_n-v_*| \leq \e\}}
\leq C_\phi J_\gamma(g) \e^2.
\end{eqnarray}
This implies that  $\limsup_n |h_1(v_n)-h_1^\e(v_n)| \leq C_\phi 
J_\gamma(g)\e^{2} 
\to 0$ as $\e \to 0$. The same computation shows that
$\lim_{\e\to 0} |h_1(v)-h_1^\e(v)|=0$, and this concludes the proof.
\end{proof}

\def\refname{References}

\end{document}